\DeclareFontFamily{U}{mathx}{\hyphenchar\font45}
\DeclareFontShape{U}{mathx}{m}{n}{
      <5> <6> <7> <8> <9> <10>
      <10.95> <12> <14.4> <17.28> <20.74> <24.88>
      mathx10
      }{}
\DeclareSymbolFont{mathx}{U}{mathx}{m}{n}
\DeclareMathAccent{\widecheck}{0}{mathx}{"71}
\theoremstyle{remark}
\numberwithin{equation}{section}
\theoremstyle{definition}
\newtheorem{theorem*}{Theorem}
\newtheorem{definition*}{Definition}
\newtheorem{theorem}{Theorem}[section]
\newtheorem{definition}[theorem]{Definition}
\newtheorem{proposition}[theorem]{Proposition}
\newtheorem{lemma}[theorem]{Lemma}
\newtheorem{corollary}[theorem]{Corollary}
\newtheorem{remark}[theorem]{Remark}
\newtheorem{notation}[theorem]{Notation}
\newtheorem{example}[theorem]{Example}
\newtheorem{quest}[theorem]{Question}
\newcommand{\tc}[2]{\textcolor{#1}{#2}}
\newcommand{\lrp}[1]{\left(#1\right)}
\newcommand{\lrb}[1]{\left[#1\right]}
\newcommand{\lrm}[1]{\left|#1\right|}
\newcommand{\lrc}[1]{\left\{#1\right\}}
\newcommand{\Q}{\mathbb{Q} }
\newcommand{\R}{\mathbb{R} }
\newcommand{\C}{\mathbb{C} }
\newcommand{\A}{\mathbb{A} }
\newcommand{\PP}{\mathbb{P} }
\newcommand{\Z}{\mathbb{Z} }
\newcommand{\XXo}{\mathbb{X}^\circ }
\newcommand{\XXdo}{\widecheck{\mathbb{X}}^\circ }
\newcommand{\oversetcustom}[3][0ex]{%
  \mathrel{\mathop{#3}\limits^{
    \vbox to#1{\kern-0.5\ex@
    \hbox{$\scriptstyle#2$}\vss}}}}
\newcommand{\cF}{\mathcal{F}}
\newcommand{\T}{\mathbb{T}}
\newcommand{\orT}{\oversetcustom{\longrightarrow}{\mathbb{T}^n}}
\newcommand{\by}{\mathbf{y}}
\newcommand{\bp}{\mathbf{p}}
\newcommand{\gv}{\mathbf{g} }
\newcommand{\cv}{\mathbf{c} }
\newcommand{\vb}[1]{\mathbf{#1}}
\newcommand{\cA}{\mathcal{A} }
\newcommand{\cAp}{\mathcal{A}_{\mathrm{prin}} }
\newcommand{\cXp}{\mathcal{X}_{\mathrm{prin}} }
\newcommand{\cAps}[1]{\mathcal{A}_{\mathrm{prin},{#1}} }
\newcommand{\cX}{\mathcal{X} }
\newcommand{\ssO}{\mathcal{O} }
\newcommand{\lb}{\mathcal{L} }
\newcommand{\trop}{\mathrm{trop} }
\newcommand{\tf}{\vartheta }
\newcommand{\gp}{\mathrm{gp} }
\newcommand{\Xsp}{\widehat{\cX} }
\newcommand{\eq}[2]{\begin{equation}\label{#2} \begin{split} #1  \end{split} \end{equation}}
\newcommand{\eqn}[1]{\begin{equation*} \begin{split} #1 \end{split} \end{equation*}}
\newcommand{\cc}{{\Delta^+} }
\newcommand{\ccF}{{\Delta^+_F} }
\newcommand{\TV}[1]{ {\tv(#1)} }
\newcommand{\Xfam}{\mathscr{X} }
\newcommand{\Xfams}[1]{\mathscr{X}_{#1} }
\newcommand{\Xfsps}[1]{\widehat{\Xfam}_{#1} }
\newcommand{\Xt}{\widetilde{X} }
\newcommand{\Nuf}{N_{\text{uf}}}
\newcommand{\Iuf}{I_{\text{uf}}}
\newcommand{\Ifr}{I_{\text{fr}}}
\newcommand{\sk}[2]{\lrc{ #1 , #2 } }
\newcommand{\sgn}{\operatorname*{sgn}}
\newcommand{\cham}{\mathcal{G}}
\newcommand{\chams}[1]{\mathcal{G}_{#1}}
\newcommand{\chamdual}{\mathcal{C}}
\DeclareMathOperator{\Proj}{Proj}
\DeclareMathOperator{\val}{val}
\newcommand{\bt}{\mathbf{t}}
\newcommand{\bc}{\mathbf{c}}
\newcommand{\yd}[1]{ \scaleto{\Yng({#1})\mathstrut}{5pt} }
\DeclareMathOperator{\PGL}{PGL}
\DeclareMathOperator{\Hom}{Hom}
\DeclareMathOperator{\Spec}{Spec}
\DeclareMathOperator{\Grass}{Gr}
\DeclareMathOperator{\tv}{TV}
\DeclareMathOperator{\flow}{Flow}
\newcommand{\Yng}{\Yboxdim4pt \yng}
\DeclareMathOperator{\Star}{Star}
\title{Toric degenerations of cluster varieties and cluster duality}
\author[L.~Bossinger]{Lara Bossinger}
\email{lara@im.unam.mx}
\address{Instituto de Matem\'aticas UNAM Unidad Oaxaca\\
Le\'on 2, altos, Oaxaca de Ju\'arez\\
Centro Hist\'orico\\
68000 Oaxaca\\
Mexico}
\author[B.~Fr\'ias-Medina]{Bosco Fr\'ias-Medina}
\email{bosco@matmor.unam.mx}
\address{
Centro de Ciencias Matem\'aticas UNAM Campus Morelia\\
Antigua Carretera a P\'atzcuaro 8701\\
Col. Ex Hacienda San Jos\'e de la Huerta\\
58089 Michoac\'an\\ 
Mexico}
\author[T.~Magee]{Timothy Magee}
\email{timothy.magee@kcl.ac.uk}
\address{Department of Mathematics\\
Faculty of Natural \& Mathematical Sciences\\
King's College London\\
Strand\\
London WC2R 2LS\\
United Kingdom}
\author[A.~N\'ajera Ch\'avez]{Alfredo N\'ajera Ch\'avez}
\email{najera@matem.unam.mx}
\address{CONACYT-Instituto de Matem\'aticas UNAM Unidad Oaxaca\\
Le\'on 2, altos, Oaxaca de Ju\'arez\\
Centro Hist\'orico\\
68000 Oaxaca\\
Mexico}
\subjclass[2010]{13F60 (primary), 14M25, 14D06 (secondary).}
\keywords{cluster algebras, cluster varieties, toric degenerations.}
\thanks{This work was partially supported by CONACYT grant CB2016 no. 284621. The first author was partially supported by the Max Planck Institute of Mathematics in the Sciences, Leipzig and the ``Programa de Becas Posdoctorales en la UNAM 2018'' Instituto de Matem\'aticas, UNAM. The second and third authors acknowledge the support of Fondo Institucional de Fomento Regional para el Desarrollo Cient\'ifico, Tecnol\'ogico y de Innovaci\'on, FORDECYT 265667. The second author was additionally supported by ``Programa de Becas Posdoctorales 2019'', DGAPA, UNAM during the revisions of this article. The third author was additionally partially supported by EPSRC grant EP/P021913/1 during revisions of this article.}
\begin{document}

\calclayout

\newpage

\begin{abstract}
  We introduce the notion of a $Y$-pattern with coefficients
  and its geometric counterpart: an $\cX$-cluster variety with coefficients. 
  We use these constructions to build a flat degeneration 
  of every skew-symmetrizable specially completed $\cX$-cluster variety $\Xsp$ to the toric variety associated to its {\bf g}-fan. 
  Moreover, we show that the fibers of this family are stratified in a natural way, with strata the specially completed $\cX$-varieties encoded by $\Star(\tau)$ for each cone $\tau$ of the $\gv$-fan. These strata degenerate to the associated toric strata of the central fiber.
  We further show that the family is cluster dual to $\cAp$ of \cite{GHKK}, and the fibers cluster dual to $\cA_t$.
  Finally, we give two applications.  First, we use our construction to identify the Rietsch--Williams toric degeneration of Grassmannians from \cite{RW17} with the Gross--Hacking--Keel--Kontsevich degeneration in the case of $\Grass_2(\C^5)$. 
  Next, we use it to link cluster duality to Batyrev--Borisov duality of Gorenstein toric Fanos in the context of mirror symmetry. 

\end{abstract}

\maketitle

\setcounter{tocdepth}{1}

\tableofcontents

\section{Introduction}

\subsection{Overview}
In %a seminal paper 
\cite{GHK_birational}, Gross, Hacking and Keel began a systematic study of cluster varieties from the perspective of mirror symmetry and the minimal model program. 
Together with Kontsevich, they used this geometric approach in \cite{GHKK} to settle various important and long-standing conjectures in cluster theory. 
The insights of \cite{GHK_birational} and \cite{GHKK} reveal a strong link between cluster theory and toric geometry. 
One of the main purposes of this paper is to explore the connections between these theories. 
In particular, we pursue the idea that compactifications of $\cA$-cluster varieties are a generalization of toric varieties associated to polytopes,
while partial compactifications of $\cX$-cluster varieties generalize toric varieties associated to fans.
Recall that toric degenerations of compactified $\cA$-cluster varieties were studied in great detail in \cite{GHKK}. 
They arise from {\it{positive polytopes}}, which define a family of graded rings, and hence projective varieties.
Here we develop a theory of toric degenerations for partially compactified $\cX$-cluster varieties.
Both the partial compactifications and the toric degenerations arise from a fan construction rather than a polytope construction.
More precisely, given a special completion $\Xsp$ of $\cX$ (in the sense of \cite{FG_X}) of complex dimension $n$, we construct for each seed $s$ a flat family 
\begin{equation}
\xymatrix{
\widehat{\Xfam}_s \ar^{\pi}[d]\\ 
\ \ \mathbb{A}_{\C}^n
}
\end{equation}
defining a degeneration of $\Xsp$ with the following properties:
\begin{itemize}
\item the fiber over $(1,\dots,1)$ is $\Xsp$;
\item if $ \Xsp$ has no frozen directions then the fiber over $(0,\dots,0)$ (also called the central fiber) is the toric variety associated to the cluster complex $\Delta^+_s$;
\item if $ \Xsp$ has frozen directions then the fiber over $(0,\dots,0)$ is the toric variety associated to a fan contained in a superpotential cone. The fan structure comes from the cluster structure;
\item in both cases the cluster variety $\cX$ degenerates to the dense torus inside the toric central fiber.
\end{itemize}
One of the key ideas is to make a distinction between {\it{coefficients}} and {\it{frozen variables}}, concepts which are usually identified. 
We think of coefficients as parameters deforming the mutation formulas-- deformation parameters for the families.
Frozen variables are $\cA $-cluster variables associated to \emph{frozen directions}; 
they relate to the geometry of individual fibers rather than the deformations of these fibers.
This approach allows us to consider the toric varieties in the central fibers as a very degenerate class of cluster varieties.

To construct the family $\Xfsps{s} $ we develop the notion of a $Y$-pattern with coefficients and its geometric counterpart: $\cX$-cluster varieties with coefficients. Moreover, we develop cluster duality for cluster varieties with coefficients and prove that our degeneration is cluster dual in this generalized sense to the Gross--Hacking--Keel--Kontsevich $\cAp$-degeneration of $\cA$-cluster varieties.
By doing so, we solve simultaneously two problems in cluster theory. On the one hand we find a way to include coefficients in the $Y$-mutation formula. This was an aim of Fomin and Zelevinsky \cite{communication_w_Fomin} when they developed the foundations of cluster algebras. On the other hand, we define spaces that are cluster dual to  $\cA$-cluster varieties with coefficients, a problem posed in \cite[\S7]{GHKK} which we are about to describe. Let $\cA_{s}$ be an $\cA$-cluster variety with initial seed $s$. In \cite{GHK_birational} the authors introduced a family $ \cAps{s}\to \mathbb{A}^n_{\C} $ of deformations of $\cA_s$.
The space $\cAps{s}$ and the fibers $\cA_{\vb{t}}$ over closed points $\vb{t}\in \mathbb{A}^n_{\C}$ are themselves cluster varieties with coefficients.
In \cite[\S7]{GHKK} the authors predicted the existence of a cluster dual $\cA_{\vb{t}}^{\vee}$ to the fibers but stated it was unclear how the coefficients should be handled under dualization.  
Consider the Langlands dual seed $s^{\vee}$. Inside $ \Xfsps{s^{\vee}}$ there is an $\cX$-cluster variety with coefficients $\Xfam_{s^{\vee}}$. The restriction of $\pi$ to $\Xfam_{s^{\vee}}$ gives a map $\Xfam_{s^{\vee}}\to \A^n_{\C}$ with fiber $\cX_{\bf t}$ over ${\bf t}$. We show that the varieties $\Xfam_{s^{\vee}}$ and ${\cAps{s}}$ are cluster dual as cluster varieties with coefficients. Moreover, we show that the fibers $\cX_{\vb{t}}$ are exactly the wanted cluster duals $\cA_{\vb{t}}^{\vee}$. For more details the reader is referred to \S\ref{sec:cluster_duality}, in particular \thref{rem:dual_specified_coeff}.

This is the first of a series of papers. In a sequel, along with M.-W. Cheung, we will explore how the notion of cluster duality with coefficients is related to Batyrev--Borisov duality in the context of mirror symmetry. In \cite{BCMNC} we will show that the toric degeneration of the Grassmannian given by \cite{RW17} coincides with the toric degeneration given by the $\cAp$-construction for all $\Grass_k\lrp{\C^n}$.

\subsection{Toric degenerations}
For the past 25 years constructions of toric degenerations have been of increasing interest in algebraic geometry, with particular development in representation theory (we refer to the overview \cite{FFL16} and references therein). 
The core idea is to glean information about varieties of interest by passing through the world of toric geometry, where otherwise difficult questions become readily approachable.
If we construct a flat family whose generic fiber is isomorphic to the variety of interest while a special fiber is a toric variety, this allows us to deduce properties of the generic fiber from the special fiber (e.g. degree, Hilbert-polynomial or smoothness).
Typically for polarized projective varieties $\lrp{Y,\lb}$ this is realized using Newton--Okounkov bodies,
where a valuation on the section ring of $\lb$ determines a convex body generalizing the toric notion of a Newton polytope (see e.g. \cite{Oko98,LM09,KK12,An13}).

Cluster varieties (and their compactifications) entered the stage of toric degenerations in \cite{GHKK} and \cite{RW17}. 
In both aforementioned papers the authors construct toric degenerations using polytopes (in a broad sense) related to section rings of ample line bundles over compactified cluster varieties. While \cite{RW17} use Newton--Okounkov bodies for Grassmannians directly, the construction given in \cite{GHKK} is more general and is expected to recover toric degenerations of many representation theoretic objects.     
For example, the case of flag varieties is addressed in \cite{Mag17,BF,GKS,GKS2}.

We take a slightly different approach in this paper and construct toric degenerations associated to fans naturally occurring in cluster theory.
The theory developed in this paper can be used to understand the relation between the approach taken in \cite{GHKK} and that of \cite{RW17}.
We illustrate this in the example of $\Grass_2(\C^5)$ (see \S\ref{ex:RW}), and in a sequel \cite{BCMNC} we will treat the general case.

\subsection{The geometric setting} 
\label{intro:geometric_setting}
In the world of toric geometry, there are two main combinatorial devices encoding toric varieties-- fans and polytopes.
A fan gives a recipe for gluing affine schemes together to construct the toric variety.
A polytope gives a graded ring in terms of a basis of homogeneous elements and relations.
So, it gives a projective toric variety equipped with an ample line bundle and a vector space basis for each homogeneous component of the section ring of this line bundle.
A philosophical take-away of this paper is that $\cX$-varieties compactify with a fan construction, while $\cA$-varieties compactify with a polytope construction.

Take as a starting point the scattering diagram description of $\cA$- and $\cX$-varieties introduced in \cite{GHKK}.
The same scattering diagram (living in $\cX^{\trop}\lrp{\R}$) encodes both $\cA$ and $\cX$, but in different ways.
We outline both below.

The scattering diagram tells us how to write down a basis of global regular functions on $\cA$-- whose elements are called $\tf$-functions-- as well as the structure constants for multiplying these $\tf$-functions.
That is, the scattering diagram encodes $\ssO(\cA)$, together with a distinguished vector space basis.
This generalizes the toric notion that the cocharacter lattice $M$ of $T^\vee$ is a distinguished vector space basis for $\ssO(T)$, where $T\cong \lrp{\C^*}^n$ is an algebraic torus and $T^\vee$ is the dual torus.
To address projective varieties compactifying $\cA$, 
Gross, Hacking, Keel and Kontsevich introduce {\it{positive polytopes}} $\Xi$ in $\cX^\trop\lrp{\R}$ (see \cite[Definition~8.6]{GHKK}).
Positivity is precisely what is needed for the integer points ($\tf$-functions) of $\Xi$ to define a {\emph{graded}} ring.
The result is a polarized projective variety $\lrp{Y,\lb}$ together with a basis of $\tf$-functions for the section ring of $\lb$.
In the $\mathcal A_{\text{prin}}$-construction, they then give a flat family of deformations
of $\cA$ over $\A^n$, with central fiber simply the torus $T_{N,s} \subset \cA$ associated to the seed $s$.
The $\tf$-functions extend canonically to $\cAps{s}$, yielding a toric degeneration of $\lrp{Y,\lb}$.

On the other side of the picture, 
a subset of the scattering diagram (known as the $\vb{g}$-fan) has a simplicial fan structure.
The key idea here is to exploit this fan structure.
In the atlas for the $\cX$-variety itself, there is a torus for every maximal cone of the $\gv$-fan.
Specifically, given a maximal cone $\cham$ (a {\it{$\gv$-cone}}) with dual cone $\chamdual$ (a {\it{$\cv$-cone}}),
we have the torus 
\eqn{
T_{M,\cham}:= \Spec\lrp{ \C \lrb{ \chamdual(\Z)^\gp} }, 
}
where $\chamdual(\Z)$ is the monoid of integral points in $\chamdual$ and $\chamdual(\Z)^\gp$
is its group completion.
These tori are glued by the $\cX$-mutation formula, with the (non-zero) monoid generators of $\chamdual(\Z)$ giving rise to an $\cX$-cluster.
From the perspective of toric geometry, it is far more natural to glue the affine spaces
\eqn{
\A_{M,\cham}^n:= \Spec\lrp{ \C \lrb{ \chamdual(\Z)} } }
by $\cX$-mutation.
The resulting space is a partial compactification of $\cX$-- the {\it{special completion}} $\Xsp$ of \cite{FG_X}.
Refinements $\Sigma$ of the $\gv$-fan similarly yield partial compactifications.

Rather than deforming global sections of line bundles as for the compactification of $\cA$, we can deform the gluing of patches $\A_{M,\cham}^n$.
To define these deformations, we
introduce the notion of $\cX$-cluster varieties with coefficients, see \S\ref{sec:cluster_var_coeff}.
As an important case of this notion, we describe how to deform the gluing in a manner dual to the deformation of $\vartheta$-functions in $\mathcal{A}_{\mathrm{prin}}$:

\begin{definition}
Fix an initial seed $s_0$, with associated $\gv$-cone $\cham_0$,
and take principal coefficients $t_1, \dots, t_n$ at $s_0$.
Let $R= \C\lrb{t_1,\dots, t_n}$. 
The \emph{$\cX$-cluster variety with coefficients} in $R$ and its special completion are the $R$-schemes
\[
\Xfam_{\cham_0}:=\bigcup_{\mathcal G} T_{M,\mathcal G}(R)
\ \ \text{and} \ \ \Xfsps{\cham_0} :=\bigcup_{\mathcal G} \A^n_{M,\mathcal G}(R)
\]
with patches glued birationally via the mutation formula with coefficients \eqref{eq:mutfamily}. 
\end{definition}

Below we summarize some of our main results. 

\begin{theorem}(\thref{flat}, \thref{toric_degen}, \thref{iso of fibres}, \thref{smooth})
The family $\Xfsps{\cham_0} \to \Spec(R)$ is a flat toric degeneration of $\Xsp$ to the toric variety defined by the $\gv$-fan, realized here with $\cham_0$ as the positive orthant.
Generic fibers of this family are isomorphic and smooth.
\end{theorem}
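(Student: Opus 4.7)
The plan is to verify each of the four claims by reducing everything to a local calculation on the affine charts $\A^n_{M,\cham}(R)$ and then checking compatibility across the gluing maps given by the mutation formula with coefficients \eqref{eq:mutfamily}.

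First, for flatness: every chart is of the form $\Spec\lrp{R[\chamdual(\Z)]} \cong \Spec\lrp{R[x_1,\dots,x_n]}$, which is a free, hence flat, $R$-module. Since the mutation formulas with coefficients are $R$-algebra isomorphisms between localizations of adjacent charts (this is part of the construction earlier in the paper that makes $\Xfsps{\cham_0}$ a well-defined $R$-scheme), flatness glues and $\Xfsps{\cham_0}$ is flat over $\Spec(R) = \A^n_\C$.

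Next I would identify the special fibers. The fiber over $(1,\dots,1)$ is obtained by specializing every $t_i$ to $1$ in \eqref{eq:mutfamily}, which by definition reduces the mutation formula with coefficients to the usual $\cX$-mutation; hence this fiber is covered by patches $\A^n_{M,\cham}$ glued by the standard $\cX$-mutation, which by construction is $\Xsp$. For the central fiber, I would set $t_i=0$ and verify that the coefficient-bearing binomial factors in \eqref{eq:mutfamily} collapse, leaving only the monomial change of variables that encodes the toric gluing between adjacent maximal cones of the $\gv$-fan. In the absence of frozen directions this directly recovers $\tv(\Delta^+_s)$; with frozen directions present, I would need to check that restriction to the superpotential cone cuts out exactly the fan whose toric variety is claimed to be the central fiber. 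This chart-by-chart identification of the central fiber is the main obstacle I anticipate, because one must check that the toric gluing recovered at $t=0$ is globally consistent across every adjacency in the full $\gv$-fan, not just locally around a single wall, and that the bookkeeping with frozen directions correctly produces the stated superpotential-cone fan.

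Finally, for the isomorphism and smoothness of generic fibers, I would exploit the principal-coefficient structure at $s_0$: for any $\bt \in (\C^*)^n$, rescaling the $\cX$-coordinates on each chart by appropriate monomials in $\bt$ absorbs the coefficients and conjugates the mutation with coefficients back into the standard $\cX$-mutation. This yields an explicit isomorphism between the fiber over $\bt$ and the fiber over $\vb{1}$, so every fiber over the open torus $(\C^*)^n \subset \A^n_\C$ is isomorphic to $\Xsp$. Smoothness is then automatic, since $\Xsp$ is covered by the smooth affine patches $\A^n_{M,\cham}$ glued by birational isomorphisms on their overlap loci.
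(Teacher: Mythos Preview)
Your approach is essentially the same as the paper's for flatness (local freeness of polynomial rings over $R$), the identification of the central fiber (setting $\bt=0$ in \eqref{eq:mutfamily} and checking that both the overlap loci and the resulting monomial transition maps match the toric gluing for $\ccF$), and the isomorphism of generic fibers (the paper's rescaling is exactly $\Xt_{i;\cham}\mapsto \frac{{\vb{u}'}^{\cv_{i;\cham}}}{\vb{u}^{\cv_{i;\cham}}}\Xt_{i;\cham}$, using the $\cv$-vector at each seed, which is what you should have in mind for your ``appropriate monomials''). Your concern about global consistency of the toric gluing at $\bt=0$ is unnecessary: the cocycle condition already holds for the family $\Xfsps{\cham_0}$ itself (this follows from \thref{periodicity}), so specializing to $\bt=0$ automatically yields a well-defined scheme; only the \emph{pairwise} identification of adjacent charts with the toric gluing needs to be checked. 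The frozen case is also not a separate issue in the paper---both cases are handled uniformly through the fan $\ccF$.

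For smoothness your argument is simpler than the paper's. The paper first observes that the toric central fiber $\Xsp_0$ is smooth (its fan is unimodular simplicial), then invokes openness of the smooth locus in families together with openness of $\pi$ to obtain an open neighborhood of $0$ with smooth fibers, and finally uses \thref{iso of fibres} to propagate smoothness to all of $(\C^*)^n$. Your direct observation---every generic fiber is covered by opens isomorphic to $\A^n$ and smoothness is local---is correct and bypasses all of this machinery. The paper's route has the incidental benefit of also establishing smoothness of the central fiber and of nearby non-torus fibers, but for the statement as phrased your argument suffices.
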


\begin{theorem}(\thref{c-deg}, \thref{tto0})
The $\cX$-variables $X_{i;\cham}$ extend canonically to homogeneous rational functions on $\Xfsps{\cham_0}$.
The degree of the extension of $X_{i;\cham}$ is its $\cv$-vector $\cv_{i;\cham}$, and in the central fiber the extension restricts to the monomial whose exponent vector is this $\cv$-vector.
\end{theorem}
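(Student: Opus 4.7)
The plan is to prove both parts by induction on the length of a mutation sequence from the initial cone $\cham_0$. I introduce a grading on the coordinate rings of the patches of the family $\Xfsps{\cham_0}$ that refines the natural exponent-lattice grading. The grading places each initial $\cX$-variable $X_{j;\cham_0}$ in degree $\cv_{j;\cham_0} = e_j$, and the degree of each coefficient $t_j$ is fixed by demanding that every binomial factor $1 + t\cdot z^v$ in the mutation formula with coefficients \eqref{eq:mutfamily} be homogeneous of degree zero. The base case is then immediate: $X_{i;\cham_0}$ is the coordinate function $z^{e_i}$ on the initial patch, of degree $\cv_{i;\cham_0}$, and restricts to $z^{e_i}$ on the central fiber.

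For the inductive step, assume both assertions hold for a cone $\cham$ and consider an adjacent cone $\cham'$ obtained by mutating in direction $k$. Using \eqref{eq:mutfamily}, write $X_{i;\cham'}$ in the patch associated to $\cham$ as a product of $X_{i;\cham}$ with a monomial in $X_{k;\cham}$ and a rational factor of the form $(1 + t\cdot z^v)^{-\varepsilon_{ik}}$; the case $i = k$ reduces directly to $X_{k;\cham}^{-1}$. By the inductive hypothesis and the degree-zero nature of the binomial factor, $X_{i;\cham'}$ is homogeneous, and its degree transforms under the classical $\cv$-vector mutation rule, yielding $\cv_{i;\cham'}$. For the central fiber, setting all $t_j = 0$ collapses every binomial factor to $1$, so only the accumulated monomial contributions survive; induction on the mutation sequence shows the resulting exponent is exactly $\cv_{i;\cham'}$.

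The main obstacle I anticipate is confirming that the specialization at $t = 0$ produces a regular function on the corresponding affine patch of the central fiber — equivalently, that the accumulated monomial exponent lies in the dual cone of $\cham'$ and not merely in its group completion. This reduces to sign-coherence of $\cv$-vectors, established for the skew-symmetrizable case in \cite{GHKK}, which ensures the binomial exponents appearing in \eqref{eq:mutfamily} carry a consistent sign along any mutation sequence so that no spurious negative powers arise after specialization. A secondary point is that the refined grading must be well-defined across patches; this follows from the degree-preserving nature of \eqref{eq:mutfamily}, closing the inductive loop.
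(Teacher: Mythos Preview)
Your inductive strategy is sound and, for the central-fiber limit (\thref{tto0}), essentially coincides with the paper's proof: the paper also inducts on mutation distance, splits according to whether $\sgn(\epsilon_{ik})$ agrees with the sign of $\cv_{k;\cham}$, and computes the limit case by case to recover the $\cv$-vector mutation rule \eqref{eq:mutcv}. For the homogeneity statement (\thref{c-deg}) the paper takes a shorter route: it invokes the separation formula \eqref{general_separation_formula} specialized to principal coefficients, which immediately exhibits $\mu_{s_0}^*(\Xt_{i;\cham})$ as $Y_{i;\cham}(t_1 X_1,\dots,t_n X_n)/\bt^{\cv_{i;\cham}}$, visibly homogeneous of degree $\cv_{i;\cham}$ since each $t_j X_j$ has degree zero. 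Your induction for this part also works --- one checks the binomial in \eqref{eq:mutfamily} is always homogeneous using $[v]_+-[-v]_+ = v$ --- but it re-derives what the separation formula already packages.

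Your ``main obstacle'' paragraph misidentifies the difficulty. Regularity of $\Xt_{i;\cham'}$ on the patch $U_{\cham'}$ is automatic: it is a coordinate there. That the exponent $\cv_{i;\cham'}$ lies in $\chamdual'$ is likewise tautological --- by tropical duality (\thref{th:tropical_duality}) the $\cv$-vectors are the monoid generators of $\chamdual'$. The genuine role of sign-coherence is in the limit computation itself: without it, both $[\sgn(\epsilon_{ik})\cv_k]_+$ and $[-\sgn(\epsilon_{ik})\cv_k]_+$ could be nonzero, and then \emph{both} summands of the binomial in \eqref{eq:mutfamily} vanish at $\bt=0$, so the limit is $0^{-\epsilon_{ik}}$ rather than a monomial. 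Sign-coherence guarantees exactly one of these exponents is zero, giving the clean dichotomy \eqref{eq:same sign}--\eqref{eq:opp sign} that the paper uses.

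Finally, you do not address ``canonically'': \thref{tto0} also asserts that the extension is the \emph{unique} homogenization of the coefficient-free expression of degree $\cv_{i;\cham}$ with the prescribed $\bt\to 0$ limit. That uniqueness argument (the second half of the proof of \thref{tto0}) is separate from the induction and is missing from your sketch.
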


\begin{theorem}(\thref{strata}, \thref{strata2strata})
The fibers of $\Xfsps{\cham_0} \to \Spec\lrp{R}$ are stratified, with each stratum encoded by $\Star(\tau)$ for some cone $\tau$ of the $\gv$-fan.
These strata $V(\tau)_{\vb{t}}$ are again specially completed $\cX$-varieties with coefficients, and the stratum $V(\tau)_{\vb{t}}$ degenerates to the toric stratum $V(\tau)$ defined by $\Star(\tau)$ in the central fiber.
\end{theorem}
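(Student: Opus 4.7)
The plan is to construct the strata locally on each $\gv$-cone patch and then glue using the mutation-with-coefficients formula. Fix a $\gv$-cone $\cham$ and a face $\tau \subset \cham$, and let $J \subset \{1,\dots,n\}$ index the rays of $\cham$ lying in $\tau$, so that $\cv_{j;\cham}$ for $j \in J$ are the primitive generators of the face of $\chamdual$ dual to $\tau$. On the patch $\A^n_{M,\cham}(R) = \Spec R[X_{1;\cham},\dots,X_{n;\cham}]$ I would define the local piece of $V(\tau)_{\vb{t}}$ as the closed subscheme cut out by the ideal $(X_{j;\cham} : j\in J)$. At $\vb{t}=\vb{0}$ this recovers, by the standard toric orbit--cone correspondence, the closure of the torus orbit corresponding to $\tau$ inside the toric patch $\A^n_{M,\cham}$.

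The first step is to check that this local recipe is compatible with the mutation formula with coefficients \eqref{eq:mutfamily}, so that the local pieces glue to a flat closed subscheme $V(\tau)_{\vb{t}} \subset \Xfsps{\cham_0}$ over $\Spec(R)$. If $\cham$ and $\cham'$ are adjacent $\gv$-cones both containing $\tau$ as a face, then the mutation direction $k$ lies in neither $\tau$ nor its opposite, so the mutation only modifies $X_{k;\cham}$ and multiplies each $X_{i;\cham}$ ($i\neq k$) by a power of the deformed binomial $1 + X_{k;\cham}\bt^{\bc_k}$ (or its inverse). Using the $\cX$-analog of sign-coherence (equivalently, the cluster-dual statement to sign-coherence of $\gv$-vectors in the $\cAp$-family, cf.\ \thref{c-deg}), the exchange monomials coming from $j \in J$ stay monomial in $\{X_{i;\cham}\}_{i \notin J}$ and never invert any $X_{j;\cham}$ with $j \in J$. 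Consequently the ideal $(X_{j;\cham}:j\in J)$ pulls back to the analogous ideal on the $\cham'$-patch and the local pieces glue.

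Next I would identify the glued subscheme with a specially completed $\cX$-variety with coefficients encoded by $\Star(\tau)$. Passing to $R[X_{i;\cham} : i\notin J]$ on each patch gives a collection of affine spaces indexed by the maximal cones of $\Star(\tau)$ in $N/N_\tau$, where $N_\tau$ is the sublattice generated by the $\gv$-vectors in $\tau$. The residual mutations, being those internal to $\Star(\tau)$, restrict to well-defined birational identifications of these patches satisfying the mutation-with-coefficients relations for the induced $Y$-pattern on $\Star(\tau)$; the rays of $\tau$ become frozen directions of this smaller cluster structure and the original $\vb{t}$ induce its coefficients (essentially by specializing at $X_{j;\cham}=0$ for $j\in J$ and re-indexing). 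Setting $\vb{t}=\vb{0}$ in the end identifies $V(\tau)_{\vb{0}}$ on each patch with the coordinate subspace $\{X_{j;\cham}=0 : j\in J\}$, which is exactly the toric stratum associated to $\tau$ by $\Star(\tau)$ in the central fiber, giving the degeneration statement.

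The main technical obstacle I expect is the gluing step: verifying that the binomial factors $1+X_{k;\cham}\bt^{\bc_k}$ involved in mutating $X_{j;\cham}$ for $j\in J$ truly preserve the ideal $(X_{j;\cham}:j\in J)$ across every pair of adjacent cones meeting along $\tau$. This ultimately rests on the sign-coherence properties used to define the family, which is why I would first recast the required positivity in a seed-independent form along $\tau$ (using that $\tau$ is a face of every $\cham$ in $\Star(\tau)$) before invoking it cone by cone. Once gluing is established, identifying $V(\tau)_{\vb{t}}$ with the $\Star(\tau)$-family and matching the central fiber with the toric stratum follow formally from the local coordinate picture.
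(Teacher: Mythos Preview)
Your local construction and the check that the ideal $(X_{j;\cham}:j\in J)$ is preserved under a single mutation $\mu_k$ between \emph{adjacent} $\gv$-cones both containing $\tau$ are essentially correct, and match the paper's observations (1) and (2) in the proof of \thref{strata}. (Incidentally, that local check does not really need sign-coherence: since $k\notin J$, the factor multiplying $X_{j;\cham}$ involves only $X_{k;\cham}$ and $\bt$, so $X_{j;\cham}=0$ forces $\mu_k^*(X_{j;\cham'})=0$, and for $i\notin J$ the formula is independent of the $X_{j;\cham}$'s. You are making this step harder than it is.)

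The genuine gap is elsewhere. Checking compatibility across adjacent cones only shows that, for each fixed starting cone $\cham_0\supset\tau$, the patches reachable from $\cham_0$ by mutations \emph{in directions $k\notin J$} glue to an immersed specially completed $\cX$-variety $\Xsp_{\vb{t}}(\tau)_{\cham_0}$. What you have not addressed is whether \emph{every} $\gv$-cone containing $\tau$ is reachable from $\cham_0$ by such a path, i.e.\ by mutations that never exchange a ray of $\tau$. If two cones $\cham_0,\cham_0'\supset\tau$ were only connected through cones not containing $\tau$, the composite birational map would pass through charts where some $X_{j}$ with $j\in J$ has been inverted, and there is no reason the locally defined strata would agree; one would obtain distinct immersed subschemes $\Xsp_{\vb{t}}(\tau)_{\cham_0}\neq\Xsp_{\vb{t}}(\tau)_{\cham_0'}$ rather than a single $V(\tau)_{\vb{t}}$, and no stratification. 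This connectivity statement is exactly the content of the Cao--Li theorem \cite[Theorem~6.2]{CaoLi}, which the paper invokes as the second key ingredient: any two clusters sharing a common subset of cluster variables are linked by mutations that never exchange those variables. You need this (or an equivalent) to conclude that the local pieces assemble into a single well-defined closed subscheme encoded by $\Star(\tau)$. Once that is in hand, the degeneration to the toric stratum is, as you say, immediate from $\vb{t}\to 0$.
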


We can replace the $\gv$-fan with any refinement $\Sigma$ and obtain results analogous to each of the theorems above (see \thref{refine}).

In the toric setting, fans and polytopes are both used to compactify tori.
Here, we find fans compactifying $\cX$-varieties and polytopes compactifying $\cA$-varieties.
This is completely natural in the scattering diagram description of cluster varieties, as discussed above.  
But there are other reasons to expect different types of compactifications for the two flavors of cluster varieties.
First, $\cA$-variables are $\tf$-functions, globally regular on $\cA$, while $\cX$-variables are only locally defined.
In fact, as observed in \cite{GHK_birational}, $\cX$-varieties may not have many global functions at all.
From this perspective, $\cA$-varieties are primed for a polytope-$\Proj$ construction, while it is more natural to think of an $\cX$-variety in terms of an atlas of affine schemes.
Moreover, it is usually {\emph{hopeless}} to attempt to compactify an $\cX$-variety with a $\Proj$ construction as $\cX$-varieties are generally not separated \cite[Remark~2.6]{GHK_birational}, while $\Proj$ of a graded ring is always separated.

\begin{remark}\thlabel{rem:AprinXfam}
That said, there are many interesting cases where the $\cA$- and $\cX$-varieties are isomorphic.
Namely, if there is a unimodular $p^*$-map \cite[Equation~2.1]{GHK_birational}, then we have isomorphic $\cA$- and $\cX$-varieties.
However, the different ways of viewing this space lead to very different compactifications.
In this case, our family $\Xfams{s_0}$ of \thref{def:Xfam} is just $\cAps{s_0}$, with $p$ giving fiberwise identifications of $\cA_{\vb{t}}$ and $\cX_{\vb{t}}$.
But $\Xfsps{s_0}$ and the partial compactifications of $\cAps{s_0}$ considered in \cite{GHKK} differ.
\end{remark}

We would like to emphasize here that $\cX$-varieties, although generally non-separated, are indeed spaces of interest.
They are nice in many other ways-- they are for instance smooth log Calabi--Yau varieties equipped with a positive structure.
They entered center stage through the work of Fock and Goncharov, particularly \cite{FG_Teich}.
Here, a stacky version of the $\cX$-variety serves as the moduli stack of framed $G$-local systems on a marked surface with boundary, where $G$ is a split semisimple algebraic group with trivial center.
Furthermore, Fock and Goncharov describe how the positive real points of the $\cX$-variety define higher Teichm\"uller spaces, while tropical points define lamination spaces.
They study Teichm\"uller and lamination spaces further in \cite{FG_DualTeichLam}, where they exhibit a canonical pairing between $\cX$ (resp. $\cA$) Teichm\"uller spaces and $\cA$ (resp. $\cX$) lamination spaces.
Moreover, in \cite{FG_X} they introduce completions of Teichm\"uller spaces for decorated surfaces with marked points in the boundary. 
They prove that the set of positive points of a specially completed $\cX$-cluster variety gives a completion of the corresponding Teichm\"uller space with boundary components corresponding to simple laminations. 
In this context $\cX$-cluster varieties provide a formalism for quantizing such Teichm\"uller spaces following the approach of Chekhov and Fock \cite{FC99}.
Le studies higher Teichm\"uller and lamination spaces {\it{\`a la}} Thurston in \cite{Le} and similarly gives a compactification of higher Teichm\"uller space with projective laminations.  
Allegretti and Bridgeland relate the moduli stacks mentioned above for $G=\PGL_2(\C)$ to projective structures on the corresponding surfaces in \cite{AlBr}. On the algebraic side of the picture, cluster variables associated to finite type $\cX$-cluster varieties have been studied in great detail in \cite{S-B18}.

\subsection{The algebraic foundations}
At the heart of cluster theory there are two central notions: \emph{cluster patterns} and \emph{$Y$-patterns}. These are certain recurrences extensively studied by Fomin and Zelevinsky in \cite{FZ_clustersI,FZ_clustersII,FZ_Y,FZ_clustersIV}. 
The geometric incarnation of a cluster pattern (resp. $Y$-pattern) is an $ \cA$-cluster variety (resp. $\cX $-cluster variety). 
To define a cluster algebra with coefficients one considers a cluster pattern with coefficients in a $Y$-pattern. 
Such a cluster algebra lives in an ambient field $\cF=K(x_1,\dots,x_n)$ of rational functions. 
By construction, $Y$-patterns are \emph{clusters} of elements of the coefficient field $K$ and cluster patterns are \emph{clusters} of functions in $\cF$ generating the cluster algebra. 
The geometric considerations discussed above suggest that there should exist a notion of a \emph{$Y$-pattern with coefficients}, which we define as follows:

Every seed of such a pattern is given by a triple $(\mathbf y,\mathbf p, B)$, where $(\mathbf p,B)$ is a $Y$-seed in some semifield $\PP$ and $(\mathbf y,B)$ is a $Y$-seed in a universal semifield with coefficients in $\Q\PP$ (see \eqref{eq:univ_PP_sf}).
Then $(\mathbf y',\mathbf p',B')$ is the mutation in direction $k$ of $(\mathbf y,\mathbf p,B)$, if $(\mathbf p',B')$ is the usual $Y$-mutation in direction $k$ of $(\mathbf p,B)$ and $\mathbf y'=(y'_1,\dots,y'_n)$ is given by
\begin{eqnarray*}
    y'_j := \left\{\begin{matrix} y_j^{-1} &\text{ if } j=k, \vspace{1mm} \\
    y_j \lrp{ p_k^{\llbracket b_{kj}\rrbracket} +  p_k^{\llbracket -b_{kj}\rrbracket} y_k^{-\sgn(b_{kj})} } ^{-b_{kj}} &\text{ if } j\not =k,\end{matrix}\right.
\end{eqnarray*}
where $p_k^{\llbracket \pm  b_{kj}\rrbracket}$ are certain expressions in $\PP$ (see \thref{p_+-} below, resp. \S\ref{y_pat_ceof} for more details).
As expected such a pattern lives in an ambient field of rational functions. 
Indeed, $x$-cluster variables are functions on an $\cA$-cluster variety, while the $y$-variables are local functions on an $\cX$-cluster variety\footnote{Due to the Laurent phenomenon, Fomin and Zelevinsky's $x$-cluster variables are global functions. 
There is not a general Laurent phenomenon for Fomin and Zelevinsky's $y$-variables; therefore, they are not global functions. However, they are always regular functions in at least one torus of the cluster atlas.}. 

We prove similar to the case of cluster patterns with coefficients, that $Y$-patterns with coefficients satisfy \emph{separation formulas} in \thref{separation_formulas}. Further, in \thref{periodicity} we show that the \emph{periodicities} of $Y$-patterns with coefficients agree with those of the coefficient-free case.

\subsection{An example} \label{sec:ex_A2}
We illustrate our theory in the smallest non-trivial example. Let $\Sigma$ be the complete fan in $\R^2 $ depicted in Figure~\ref{fig:A2_family}. It is defined by the rays spanned by $(1,0), (0,1), (-1,1), (-1,0)$ and $(0,-1)$. This fan is the ${\bf g}$-fan of an $\cX$-cluster variety of type $A_2$. 
There are two geometric objects naturally associated to the fan $\Sigma$:
\begin{enumerate}
    \item the toric variety $\text{TV}(\Sigma)$;
    \item the special completion of the corresponding $\cX$-cluster variety\footnote{The usual $\cX$-variety is also naturally associated to $\Sigma$ by gluing complex tori instead of affine spaces.}.
\end{enumerate}

\begin{figure}[!htbp]
    \centering
\begin{tikzpicture}
\draw[thick, teal] (0,-2.5) -- (0,3.5);
\draw[thick, teal] (-4,0) -- (3,0);
\draw[thick, teal] (0,0) -- (-3.25,3.25);

\node at (.25,.25) {\tiny \bf$\textcolor{teal}{\sigma_0}$};
\node at (.25,-.25) {\tiny \bf$\textcolor{teal}{\sigma_1}$};
\node at (-.25,-.25) {\tiny \bf$\textcolor{teal}{\sigma_2}$};
\node at (-.5,.25) {\tiny \bf$\textcolor{teal}{\sigma_3}$};
\node at (-.25,.5) {\tiny \bf$\textcolor{teal}{\sigma_4}$};

\node at (2.25,2.75) {\tiny $X_2$}; %% s_0
\node at (1,1.75) {\tiny $X_1$};

\draw[<->,dashed] (2.25,2.5) -- (2.25,-1.75);
\draw[<->,dashed] (1,1.5) -- (1,-.75);

\node at (2.25,-2) {\tiny $X_2^{-1}$}; %% s_1
\node at (1.25,-1) {\tiny $X_1(\textcolor{magenta}{t_2}X_2+1)$};

\draw[<->,dashed] (1.75,-2) -- (-1.75,-2) ;
\draw[<->,dashed] (.25,-1) -- (-.5,-1);

\node at (-3,-2) {\tiny $\frac{\textcolor{magenta}{t_1}X_1(1+\textcolor{magenta}{t_2}X_2)+1}{X_2}$}; %% s_2
\node at (-1.5,-1) {\tiny $\frac{1}{X_1(\textcolor{magenta}{t_2}X_2+1)}$};

\draw[<->,dashed] (-3,-1.75) -- (-3,1.25);
\draw[<->,dashed] (-1.75,.5) -- (-1.75,-.75);

\node at (-1.75,.75) {\tiny $\frac{\textcolor{magenta}{t_1}X_1+1}{X_1X_2}$};
\node at (-3.25,1.675) {\tiny $\frac{X_2}{\textcolor{magenta}{t_1}X_1(1+\textcolor{magenta}{t_2}X_2)+1}$};

\draw[<->,dashed] (-2.125,1.75) -- (-.875,1.75);
\draw[<->,dashed] (-1.75,1.125) -- (-1.75,2.5);

\node at (-.5,1.75) {\tiny $X_1^{-1}$};
\node at (-1.75,2.75) {\tiny $\frac{X_1X_2}{\textcolor{magenta}{t_1}X_1+1}$};

\draw[<->,dashed] (-.25,1.75) -- (.75,1.75);
\draw[<->,dashed] (-1.25,2.75) -- (1.75,2.75);
\end{tikzpicture}
    \caption{Visualization of $\widehat{\Xfam}_s$ defined by a $Y$-pattern with principal coefficients in type $A_2$. For more details see Table~\ref{tab:A2_principal coeff}.}
    \label{fig:A2_family}
\end{figure}
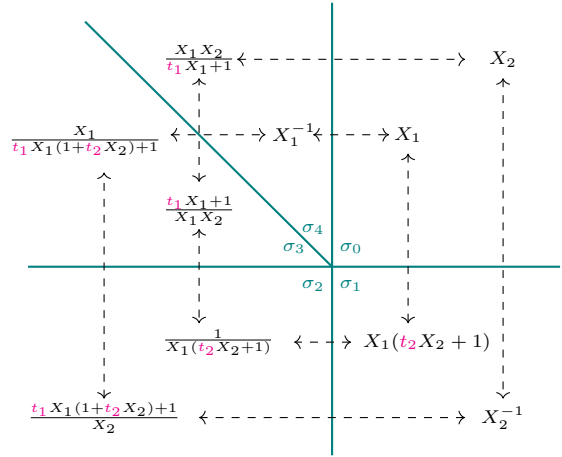

Both these varieties are obtained by gluing affine spaces isomorphic to $\A^2_{\C}$ via birational transformations. 
The cluster gluing can be deformed to obtain the toric gluing. 
To see this, we describe the family $\pi:\widehat{\Xfam}_s\to \A^2_{\C}$ (see \thref{def:Xfam}) explicitly and show how it simultaneously embodies both of the aforementioned geometric objects.
The family $\widehat{\Xfam}_s$ is an $R$-scheme for $R=\C[t_1,t_2]$ glued from five affine spaces $U_{\sigma_0}, \dots, U_{\sigma_4}$ each one isomorphic to $\A^2_R$. These affine spaces are in one-to-one correspondence with maximal cones (or $\mathbf g$-cones) $\sigma\in\Sigma$.
Figure~\ref{fig:A2_family} captures the local coordinates of the affine pieces defining $ \widehat{\Xfam_s}$ (in the algebraic language we will refer to this data as the $Y$-seeds of the $Y$-pattern with principal coefficients).
They are depicted in their associated $\mathbf g$-cones.
Explicitly, each affine piece $U_{\sigma_i}$ is the spectrum of the polynomial ring generated by the $Y$-cluster variables of the underlying $Y$-seed.
We have,
\[
U_{\sigma_i}:=\Spec\lrp{ R[X_1^i,X_2^i]},
\]
where we set $X_j:=X_j^0$.
The dashed arrows in Figure~\ref{fig:A2_family} indicate the expressions of the variables $X_j^i$ in terms of the initial variables $X_1,X_2$.
The pullbacks of the gluing morphisms are obtained from these.
For example, $U_{\sigma_0}\dashrightarrow U_{\sigma_1}$ is given by 
\begin{eqnarray*}
R[X_1^1,X_2^1] \to R[X_1,X_2], \ \text{ where } \
X_2^1 \mapsto X_2^{-1}\ \text{ and } \
X_1^1 \mapsto X_1(t_2X_2+1).
\end{eqnarray*}
From this family we can recover the 
toric and the cluster gluing associated with the fan $\Sigma$ from above as fibers by specifying values for the coefficients $(t_1,t_2)$:
\begin{enumerate}
    \item The special completion $\Xsp_s$ is the $(1,1)$-fiber of the family.
In particular, it is a $\C$-scheme glued from affine pieces $U_{\sigma_j}|_{t_i=1}$.
It can be read from Figure~\ref{fig:A2_family} by specifying $t_1=t_2=1$.
For example, the affine piece corresponding to $\sigma_1$ is $U_{\sigma_1}|_{t_i=1}=\Spec \lrp{\C[X_1^1,X_2^1]}$ with gluing $U_{\sigma_0}|_{t_i=1}\dashrightarrow U_{\sigma_1}|_{t_i=1}$ given by
\begin{eqnarray*}
\phantom{XXX} \C[X_1^1,X_2^1] \to \C[X_1,X_2], \ \text{ where } \
X_2^1  \mapsto X_2^{-1} \ \text{ and } \
X_1^1 \mapsto X_1(X_2+1).
\end{eqnarray*}
It can be verified that we obtain the usual cluster transformations for the $\cX$-cluster variety this way.
    \item The toric variety $\text{TV}(\Sigma)$ is the $(0,0)$-fiber of the family.
Hence, it is the $\C$-scheme glued from affine pieces $U_{\sigma_j}|_{t_i=0}$ and is encoded in Figure~\ref{fig:A2_family} by specifying $t_1=t_2=0$.
The affine piece $U_{\sigma_1}|_{t_i=0}$ for example, is $\Spec\lrp{ \C[X_1^1,X_2^1]}$ with gluing $U_{\sigma_0}|_{t_i=0}\dashrightarrow U_{\sigma_1}|_{t_i=0}$ given by
\begin{eqnarray*}
\C[X_1^1,X_2^1] \to \C[X_1,X_2], \ \text{ where } \ 
X_2^1 \mapsto X_2^{-1} \ \text{ and } \
X_1^1 \mapsto X_1.
\end{eqnarray*}
Note that all variables degenerate to monomials.
Further, the gluing morphisms indicated by the dashed arrows in Figure~\ref{fig:A2_family} degenerate to the classical toric gluing as described e.g. in \cite[\S3.1]{CLS}. In this case the toric variety $\text{TV}(\Sigma)$ is the blow up of $\PP^1\times \PP^1$ in a point.
\end{enumerate}

The toric variety associated to the polytope $P$ (see Figure~\ref{fig:A2_gfan_poly}) whose face-fan is $\Sigma$ is the central fiber of the corresponding $\cA_{\text{prin}}$-family. Both $\text{TV}(\Sigma)$ and $\text{TV}(P)$ are toric Fano varieties and they are in fact Batyrev-dual. 
We elaborate on this example in \S\ref{ex:dP5}, and we will explore the connection between cluster duality and Batyrev duality in general in a sequel with M.-W. Cheung.

A natural question to ask is which kind of fans can arise in cluster theory. This problem is closely related to the representation theory of quivers.  
For instance, in \cite{DWZ10,FK} the reader can find a representation-theoretic approach to {\bf g}-vectors (which are the primitive integer vectors spanning the {\bf g}-cones). 
For a representation theoretic approach to {\bf c}-vectors (the primitive integer vectors spanning the cones dual to the {\bf g}-cones) the reader can consult 
\cite{Fu17,NC_acyclic,NC_finite,NS}. 
It is therefore desirable to understand the relationship between toric varieties and quiver representations. 
Another combinatorial approach to study these fans along with their richer structure of a scattering diagrams is given in \cite{Reading,Reading18}. Moreover, the reader might also want to consult \cite{Bri17} for an approach to scattering diagrams closer to representation theory.

We would like to stress that  $\cX$-mutation formulas with principal coefficients can be obtained using the approach of \cite{LR} where the authors construct symplectic groupoids integrating log-canonical Poisson structures on $\cX$-cluster varieties and their special completions. In their setting the deformation of the toric gluing is obtained using Hamiltonian flows of the groupoid charts. It is a very interesting problem to explore the relation of our approach and that of \cite{LR}.

\subsection{Structure of the paper} 
We try to make this paper as self contained as possible and seek to overcome the discrepancy between the algebraic and the geometric notation in cluster theory by writing this paper in a bilingual fashion. 
Therefore, we survey some of the algebraic and geometric foundations of the theory we will use.

In \S\ref{sec:Y-pattern} we develop the algebraic foundations of the paper. 
Namely, we introduce the notion of a $Y$-pattern with coefficients in complete generality. 
We study the periodicities of these patterns and obtain their separation formulas. 
For convenience of the reader we also recall some basic facts of cluster theory as developed by Fomin and Zelevinsky in \cite{FZ_clustersI,FZ_clustersIV}.

Sections \S\ref{sec:cluster_duality} to \S\ref{sec:examples} are of geometric flavour. We start \S\ref{sec:cluster_duality} by recalling some basic facts of cluster varieties as presented in \cite{GHK_birational} (see \S\ref{sec:cluster_var}). 
In \S\ref{sec:cluster_var_coeff} we introduce a new notion of cluster varieties with coefficients, and we study cluster duality in this setting in \S\ref{sec:cluster duality coeff}.
This is followed by a detailed discussion of the case of principal coefficients in \S\ref{sec:cluster_var_prin_coeff}.
All throughout section \S\ref{sec:cluster_duality} we translate some of the results in  \S\ref{sec:Y-pattern} from an algebraic language to a geometric one. Readers with more interest in the geometry might consider reading this paper from \S\ref{sec:cluster_duality} in a first read. However, the general algebraic features of $Y$-patterns with coefficients analyzed in Sections \S\ref{y_pat_ceof} to \S\ref{sec:relation_cluster_patterns} are of great importance and might also be interesting to geometers.

The succeeding \S\ref{sec:tropical duality} is dedicated to the tropical geometry of cluster varieties and the special completion $\Xsp$.
We recall necessary notions from \cite{FG09} in \S\ref{sec:tropicalization} and from \cite{NZ} in \S\ref{sec:trop_duality}.

In \S\ref{sec:toric_degenerations} we construct toric degenerations of $\Xsp$.
We review the construction of toric varieties via fans in \S\ref{sec:fan_toric}. 
In \S\ref{sec:toric_degen_x} we introduce the degeneration, and show that it is indeed a flat family over $\mathbb{A}^n_{\C}$ with generic fibers being isomorphic. 

Moreover, in \S\ref{sec:strata} we study the stratification of $\Xsp$ and show that the cluster strata of $\Xsp$ degenerate to the toric strata of the central fiber.

In \S\ref{sec:examples} we present detailed examples and applications of the theory developed in this paper. 
In particular, we treat the example of the Grassmannian $\Grass_2(\C^5)$ and of the del Pezzo surface of degree five.

\medskip

\subsection*{Acknowledgements}
We are deeply grateful to Bernhard Keller for pointing us to the new result \cite[Theorem~6.2]{CaoLi} of Peigen Cao and Fang Li, which is essential for our \thref{strata}. 
We would further like to thank Man-Wai Cheung, Chris Fraser, Sean Keel, Konstanze Rietsch, Harold Williams, Lauren Williams and Tony Yue Yu for enlightening discussions during the preparation of this paper. We are grateful to Melissa Sherman-Bennett for pointing out misprints in an earlier version of this paper. 
The third and fourth authors additionally extend their gratitude to Reyna S\'anchez for hospitality and support during the preparation of this article.

\section{\texorpdfstring{$Y$-patterns with coefficients}{Y-patterns with coefficients}}\label{sec:Y-pattern}

In this section we define $Y$-patterns with coefficients and study their basic properties. This concept will be used throughout the paper.

The section is structured as follows: in \S\ref{sec:semifields} we summarize preliminaries on semifields and in \S\ref{sec:coeff_free_cluster} we recall coefficient-free cluster patterns and $Y$-patterns as introduced by Fomin and Zelevinsky.
In \S\ref{sec:coeff_cluster} we remind the reader about cluster patterns with coefficients (as in \cite{FZ_clustersIV}). In particular, we recall the definition of {\bf g}-vectors and {$F$-polynomials}. In \S\ref{y_pat_ceof} we define $Y$-patterns with coefficients.
Finally in \S\ref{sec:separation formulas} we treat periodicities and separation formulas of the latter.

%%%%%%%%%%%%%%%%%%%%% SEMI FIELDS %%%%%%%%%%%%%%%%

\subsection{Semifields}\label{sec:semifields}
We summarize below the necessary background on semifields and introduce relevant notions that are being used throughout the rest of the paper. 
For a natural number $n$ let
\[
[1,n]:=\lbrace 1, \dots , n \rbrace.
\]
For $x$ any real number, set
\begin{eqnarray*}
\sgn(x):= \left\{\begin{matrix}-1 &\text{ if } x<0,\\
0 &\text{ if } x=0,\\
1 &\text{ if } x>0.\end{matrix}\right.
\end{eqnarray*}
Further, let $[x]_+:=\max(x,0)$, and extend to 
$[\lrp{x_1,\dots,x_n}]_+:=\lrp{\lrb{x_1}_+,\dots,\lrb{x_n}_+}$.

\begin{definition}
A \emph{semifield} is a quadruple $(\PP,\oplus,\cdot,1)$ that satisfies the axioms of a field, with the exception that there might not exist a neutral element nor inverses for the operation $\oplus$, called the auxiliary addition. 
\end{definition}

\begin{example}[Tropical semifields]\thlabel{exp:trop sf}
Let $I$ be a set. The free abelian group (written multiplicatively) with generating set $\lbrace p_i:i\in I\rbrace$ can be endowed with the structure of a semifield by setting
\begin{equation*}
    \left( \prod_{i\in I}p_i^{a_i}\right)\oplus \left(\prod_{i\in I}p_i^{b_i}\right):=\prod_{i\in I}p_i^{\min(a_i,b_i)}.
\end{equation*}
We denote the resulting semifield by $ \text{Trop}(p_i:i\in I) $ and call it the \emph{tropical semifield generated by} $\lbrace p_i:i\in I\rbrace$.
\end{example}

\begin{example}[Universal semifields]\thlabel{exp:univ_sf}
Let $S$ be a set and $\Q (S)$ be the field of rational functions on commuting variables $s\in S$ with coefficients in $\Q$. 
The \emph{universal semifield} $\Q_{\text{sf}}(S)$ is the subset of $\Q(S)$ formed by the elements that can be expressed as a ratio of two subtraction-free polynomials, i.e. polynomials in $\Z_{>0}[S]$.
Hence, $\Q_{\text{sf}}(S)$ is a semifield with respect to the usual operations of multiplication and addition. 
\end{example} 

Universal semifields satisfy the following universal property that will be used frequently. When $S$ is finite this property is precisely \cite[Lemma~2.1.6]{BFZ96}. The proof of the infinite case is a straightforward generalization of {\it{loc. cit.}}.

\begin{lemma}[Universal property of universal semifields]
\thlabel{universal_property}
Let $\PP $ be a semifield, $S$ a set and $\varphi: S\to\PP$ a function of sets. 
Then there exists a unique morphism of semifields $\Q_{\text{sf}}(S)\to \PP $, such that $s \mapsto \varphi(s)$ for all $s\in S$.
\end{lemma}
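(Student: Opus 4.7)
The plan is to reduce to the finite case \cite[Lemma~2.1.6]{BFZ96} and patch the resulting morphisms together compatibly.

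\textbf{Uniqueness.} First I would observe that uniqueness is essentially formal. By construction, every element of $\Q_{\text{sf}}(S)$ can be written as a ratio of polynomials in $\Z_{>0}[S]$, and every such polynomial is built from the generators $\{s : s \in S\}$ and the positive integers (obtained as iterated sums $1 \oplus \cdots \oplus 1$) using the semifield operations $\cdot$ and $\oplus$. Any morphism of semifields $\psi : \Q_{\text{sf}}(S) \to \PP$ must preserve $1$, $\cdot$ and $\oplus$ (and hence inverses and iterated sums of $1$), so its values on all of $\Q_{\text{sf}}(S)$ are determined by its values on $S$. Thus if such a morphism exists extending $\varphi$, it is unique.

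\textbf{Existence by reduction to the finite case.} For each finite subset $T \subseteq S$ denote by $\Q_{\text{sf}}(T)$ the universal semifield on $T$, which by \cite[Lemma~2.1.6]{BFZ96} admits a unique morphism $\psi_T : \Q_{\text{sf}}(T) \to \PP$ extending $\varphi|_T$. Given $f \in \Q_{\text{sf}}(S)$, choose a representation of $f$ as a ratio of subtraction-free polynomials in finitely many variables, collect those variables in a finite set $T_f \subseteq S$, and set $\tilde{\varphi}(f) := \psi_{T_f}(f)$.

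\textbf{Well-definedness and the morphism property.} The key step is to show that $\tilde{\varphi}(f)$ does not depend on the finite set $T_f$ chosen. Suppose $f$ is representable using variables from two finite subsets $T, T' \subseteq S$; then it is representable using variables from $T \cup T'$. I would verify that for any inclusion $T \subseteq T''$ of finite subsets of $S$, the natural inclusion $\Q_{\text{sf}}(T) \hookrightarrow \Q_{\text{sf}}(T'')$ satisfies $\psi_{T''}|_{\Q_{\text{sf}}(T)} = \psi_T$; this is immediate from the uniqueness in the finite case, since both morphisms extend $\varphi|_T$. Hence $\psi_T(f) = \psi_{T \cup T'}(f) = \psi_{T'}(f)$, so $\tilde{\varphi}$ is well-defined. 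Checking that $\tilde{\varphi}$ is a semifield morphism reduces to the finite case: given $f, g \in \Q_{\text{sf}}(S)$, both lie in $\Q_{\text{sf}}(T_f \cup T_g)$, so preservation of $\cdot$, $\oplus$ and inverses follows from $\psi_{T_f \cup T_g}$ being a morphism.

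\textbf{Main obstacle.} There is no serious obstacle here beyond bookkeeping; the essential content is in the finite case \cite[Lemma~2.1.6]{BFZ96}, where one must show that if two subtraction-free rational expressions represent the same element of $\Q(s_1,\dots,s_n)$ then they evaluate to the same element of any semifield $\PP$. The mildly subtle point worth stating carefully is that for subtraction-free polynomials equality in $\Q[s_1,\dots,s_n]$ amounts to equality of coefficients, which can be witnessed using only commutativity, associativity and distributivity — identities valid in any semifield — and then the ratio case $p_1/q_1 = p_2/q_2$ reduces to $p_1 q_2 = p_2 q_1$ at the polynomial level. Once this is in hand, the reduction to finite subsets above is routine.
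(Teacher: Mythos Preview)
Your proposal is correct and matches the paper's approach: the paper does not give a detailed proof but simply cites \cite[Lemma~2.1.6]{BFZ96} for the finite case and remarks that the infinite case is a straightforward generalization, which is exactly the reduction-to-finite-subsets argument you have spelled out. Your careful treatment of well-definedness via the compatibility $\psi_{T''}|_{\Q_{\text{sf}}(T)} = \psi_T$ is precisely the bookkeeping the paper leaves implicit.
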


\begin{example}
The expression $x^2-x+1\in \Q_{\text{sf}}(x)$. Indeed, $x^2-x+1$ can be expressed as the ratio $\frac{x^3+1}{x+1} $. Notice that by the universal property in \thref{universal_property} this expression as a rational function is unique.
\end{example}

It was shown in \cite[p.5]{FZ_clustersIV} that the group ring $\Z \PP$  associated to any semifield $\PP $ is a domain. We let $\Q \PP  $ be the field of fractions of $\Z \PP$. Notice that any field of rational functions of the form $\Q \PP(u_1,\dots, u_\ell)$ contains a semifield $\Q \PP_{\text{sf}}(u_1,\dots, u_\ell)$ of subtraction-free rational functions. Explicitly, an element $h\in\Q\PP_{\text{sf}}(u_1,\dots,u_\ell)$ is a rational function $\frac{f}{g}$ where $f$ and $g$ are polynomials in $u_1,\dots,u_\ell$ with coefficients in $\Z_{>0}[\PP]\subset \Z \PP$.
In particular, 
\begin{eqnarray}\label{eq:univ_PP_sf}
\Q\PP_{\text{sf}}(u_1,\dots,u_\ell)=\Q_{>0}[\PP](u_1,\dots,u_\ell)\subset \Q_{\text{sf}}(\PP\cup\{u_1,\dots,u_\ell\}).
\end{eqnarray}
Therefore, $\Q\PP_{\text{sf}}(u_1,\dots,u_\ell)$ satisfies the following universal property.

\begin{corollary}[(Universal property of $\Q\PP_{\text{sf}}(u_1,\dots,u_\ell)$)]\thlabel{universal_property QPsf}
Let $\PP'$ and $\PP$ be semifields, $p_1,\dots,p_\ell\in \PP'$ and $\PP\to \PP'$ a function of the underlying sets. 
Then there exists a unique morphism of semifields $\Q\PP_{\text{sf}}(u_1,\dots,u_\ell)\to \PP'$ such that $u_i \mapsto p_i$ for all $i\in[1,\ell]$.
\end{corollary}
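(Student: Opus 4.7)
The plan is to deduce this corollary directly from \thref{universal_property} together with the inclusion \eqref{eq:univ_PP_sf}. Implicitly the hypothesis should read that $\PP\to\PP'$ is a morphism of semifields (any extension to $\Q\PP_{\text{sf}}(u_1,\dots,u_n)\to\PP'$ will restrict on the sub-semifield of constants $\PP\subset \Q\PP_{\text{sf}}(u_1,\dots,u_n)$ to such a morphism, so this is the natural reading).

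First I would combine the given data into a single function of sets. Namely, I define $\varphi: \PP\cup\{u_1,\dots,u_n\}\to \PP'$ by letting $\varphi|_\PP$ be the given map $\PP\to\PP'$ and $\varphi(u_i):=p_i$ for $i\in[1,n]$. Applying \thref{universal_property} to $S:=\PP\cup\{u_1,\dots,u_n\}$ and $\varphi$ produces a unique morphism of semifields
\[
\Phi: \Q_{\text{sf}}\bigl(\PP\cup\{u_1,\dots,u_n\}\bigr)\longrightarrow \PP'
\]
extending $\varphi$.

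Next, I would restrict $\Phi$ along the inclusion \eqref{eq:univ_PP_sf} to obtain a morphism of semifields $\Q\PP_{\text{sf}}(u_1,\dots,u_n)\to\PP'$. By construction this restriction sends $u_i\mapsto p_i$ and agrees on $\PP$ with the given map, so it has the required properties. The slight care needed here is that the inclusion \eqref{eq:univ_PP_sf} realizes $\Q\PP_{\text{sf}}(u_1,\dots,u_n)$ genuinely as a sub-semifield, so the restriction is automatically a semifield morphism; no compatibility check between representatives is required because $\Phi$ is already well-defined on the ambient universal semifield.

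For uniqueness, I would observe that every element of $\Q\PP_{\text{sf}}(u_1,\dots,u_n)=\Q_{>0}[\PP](u_1,\dots,u_n)$ is a subtraction-free rational expression in $u_1,\dots,u_n$ with coefficients in $\Z_{>0}[\PP]$. Hence any semifield morphism to $\PP'$ is determined by its values on $\PP$ and on the $u_i$, which are prescribed. The main technical point is not difficult but worth making explicit: one must verify that the restriction of $\Phi$ actually lands in $\PP'$ on the sub-semifield $\Q\PP_{\text{sf}}(u_1,\dots,u_n)$, which is immediate since $\Phi$ is defined on the larger ambient semifield and semifields are closed under the semifield operations. This last point is really the only place where the structure of the inclusion \eqref{eq:univ_PP_sf} is used, and it is the reason the corollary follows essentially formally from \thref{universal_property}.
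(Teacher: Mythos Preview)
Your proposal is correct and follows essentially the same route as the paper: extend the given map to a set map $\PP\cup\{u_1,\dots,u_n\}\to\PP'$, apply \thref{universal_property} with $S=\PP\cup\{u_1,\dots,u_n\}$, and restrict along \eqref{eq:univ_PP_sf}. The paper's proof is just this, stated more tersely; your added remarks on uniqueness and on why restriction along a sub-semifield inclusion yields a semifield morphism are reasonable elaborations but not new ideas. One small point: your parenthetical that the hypothesis ``should read'' that $\PP\to\PP'$ is a semifield morphism is not how the paper intends it---the statement really only assumes a function of sets, since \thref{universal_property} requires nothing more, and the resulting morphism is uniquely determined by this function together with the values on the $u_i$.
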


\begin{proof}
Let $\PP\cup\{u_1,\dots,u_\ell\}\to \PP'$ be the extension of $\PP \to \PP' $ given by sending  $u_i$ to $ p_i$ for $i\in[1,\ell]$.
The claim then follows by restricting the unique morphism of semifields 
\[
\Q_{\text{sf}}(\PP\cup\{u_1,\dots,u_\ell\})\to \PP'
\]
of \thref{universal_property} to $\Q\PP_{\text{sf}}(u_1,\dots,u_\ell)$ exploiting \eqref{eq:univ_PP_sf}.
\end{proof}

\begin{notation}
\thlabel{universal_ev}
Given elements $p_1,\dots , p_\ell$ of a semifield $ \PP' $, a function of the underlying sets $\PP\to \PP'$ and $f\in \Q \PP_{\text{sf}}(u_1,\dots,u_\ell)$ we denote by 
\begin{equation*}
f^{\PP}(p_1,\dots,p_\ell)    
\end{equation*}
the image of $f$ under the morphism $\Q\PP_{\text{sf}}(u_1,\dots,u_\ell)\to \PP' $ given by the universal property of $\Q\PP_{\text{sf}}(u_1,\dots,u_\ell)$. 
\end{notation}

\begin{definition}
\thlabel{p_+-}
Let $p$ be an element of $\PP $. We define $p^+$ and $p^-$ as 
\begin{eqnarray}\label{eq:p+-}
    p^+:=\dfrac{p}{p\oplus 1} \phantom{xxx}\text{ and  } \phantom{xxx}p^-:=\dfrac{1}{p\oplus 1}.
\end{eqnarray}
\end{definition}

Notice that the expression $p^+$ (resp. $p^-$) is in general very different from the expression $p^{+1}$ (resp. $p^{-1}$). We will introduce a mutation formula that involves both kinds of expressions. Therefore, to avoid confusion we use the notation for $p\in\PP$ and $x\in \mathbb R$:
\begin{eqnarray*}
p^{\llbracket x \rrbracket }:= \left\{\begin{matrix}p^{-} &\text{ if } x<0,\\
1 &\text{ if } x=0,\\
p^+ &\text{ if } x>0.\end{matrix}\right.
\end{eqnarray*}

\begin{example}
The notation introduced in \thref{p_+-} is particularly transparent for tropical semifields. For example, let $p=p_1^2p_2^{-1}\in \text{Trop}(p_1,p_2)$. Then we have $p^{{\llbracket x \rrbracket }}=p^+=p_1^2$ for $x\in \mathbb R_{>0}$, $p^{{\llbracket 0 \rrbracket }}=1$, and $p^{{\llbracket x \rrbracket }}=p^-=p_2$ for $x\in \mathbb R_{<0}$.
\end{example}

%%%%%%%%%%%%%%%%%%%%%% COEFFICIENT FREE %%%%%%%%%%%%%%%%%%%%

\subsection{Reminder on cluster and $Y$-patterns}\label{sec:coeff_free_cluster}

In this section we recall the notion of (coefficient-free) cluster patterns, originally called \emph{exchange patterns} in \cite{FZ_clustersI}.
At the same time we recall the notion of (coefficient-free) $Y$-patterns as introduced in \cite{FZ_clustersIV}. 

Throughout \S\ref{sec:coeff_free_cluster} and \S\ref{sec:coeff_cluster} we fix a semifield $(\PP,\oplus, \cdot, 1)$ and a positive integer $n$.
Further, for \S\ref{sec:coeff_free_cluster} we fix the \emph{ambient field} $\mathcal F$, which is isomorphic to rational functions in $n$ independent variables with coefficients in $\Q$.
In \S\ref{sec:coeff_cluster} we generalize $\mathcal{F}$ to have coefficients in $\Q\PP$.

\begin{definition}\thlabel{def:seed}
A (skew-symmetrizable) \emph{labeled seed} (resp. \emph{labeled $Y$-seed}) is a pair $(\mathbf x, B )$ (resp. $(\mathbf y, B )$), where
\begin{itemize}
    \item $B=(b_{ij})$ is a skew-symmetrizable $n\times n$-matrix, called an \emph{exchange matrix},
    \item $\mathbf x=(x_1,\dots,x_n)$  is an $n$-tuple of elements in the ambient field $\mathcal F$ forming a free generating set, i.e. $\mathcal{F}\cong \Q(x_1,\dots,x_n)$,
    \item resp. $\mathbf y=(y_1,\dots,y_n)$ is a $n$-tuple of elements of $\PP$.
\end{itemize}
The tuple $\mathbf x$ is called a \emph{cluster} and its components $x_1,\dots,x_n$ \emph{cluster variables}. The components $y_1,\dots,y_n$ of $\mathbf y$ are referred to as \emph{$Y$-variables}.
\end{definition}

Notice that a skew-symmetric (resp. skew-symmetrizable) matrix determines a quiver (resp. valued quiver). Therefore, we will often talk about quivers instead of matrices.

\begin{definition}
\thlabel{def mutation no coeff}
Let $(\mathbf x,B)$ be a labeled seed (resp. $(\mathbf y,B)$ a labeled $Y$-seed) and $k\in[1,n]$. The \emph{mutation in direction $k$ of} $(\mathbf x,B)$ (resp. $(\mathbf y,B)$) is the labeled seed $(\mathbf x',B')$ (resp. labeled $Y$-seed $(\mathbf y',B')$), where

\begin{itemize}
    \item $B'=(b'_{ij})$ is the usual mutation of $B$ in direction $k$, that is
        \begin{eqnarray}\label{eq:matrix mut}
        b'_{ij}:= \left\{\begin{matrix} -b_{ij} &\text{ if } i=k\ \text{or}\ j=k,\\
        b_{ij} +\sgn\lrp{b_{ik}}\lrb{b_{ik}b_{kj}}_+ &\text{ else;}\end{matrix}\right.
        \end{eqnarray}
    \item $\mathbf x'=(x_1',\dots,x_n')$, where $x'_j:=x_j$ for $j\not=k$ and 
        \begin{eqnarray}
        x_k' := \frac{\prod x_i^{[b_{ik}]_+} + \prod x_i^{[-b_{ik}]_+}}{x_k}\in \mathcal F;
        \end{eqnarray}    
    \item resp. $\mathbf y'=(y'_1,\dots, y'_n)$, where 
        \begin{eqnarray}\label{eq:Y-pattern_mutation}
        y'_j := \left\{\begin{matrix} y_j^{-1} &\text{ if } j=k,\\
        y_j(1 \oplus  y_k^{-\sgn(b_{kj})})^{-b_{kj}} &\text{ if } j\not =k.\end{matrix}\right.
        \end{eqnarray}
\end{itemize}
\end{definition}

\begin{lemma}
Mutation of labeled seeds (resp. $Y$-seeds) is involutive. That is, $(\mathbf x', B')$ is the mutation in direction $k$ of $(\mathbf x, B)$ if and only if $(\mathbf x, B)$ is the mutation in direction $k$ of $(\mathbf x', B')$ (and equivalently for $Y$-seeds).
\end{lemma}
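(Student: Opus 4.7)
The strategy is direct verification: after mutating in direction $k$ to obtain $(\vb{x}',B')$ (resp. $(\vb{y}',B')$), I simply substitute into the mutation formula again and check that the result equals $(\vb{x},B)$ (resp. $(\vb{y},B)$). Since mutation in direction $k$ only affects row/column $k$ of the matrix and only the $k$-th cluster variable, while propagating a correction factor to the $y_j$ for $j \neq k$, the verification splits naturally into three independent pieces.

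For the matrix, I will use the two key identities $b'_{ik} = -b_{ik}$ and $b'_{kj} = -b_{kj}$, which immediately give $\sgn(b'_{ik}) = -\sgn(b_{ik})$ and $[b'_{ik} b'_{kj}]_+ = [b_{ik} b_{kj}]_+$ (the last holding because both $b'_{ik}b'_{kj}$ and $b_{ik}b_{kj}$ equal the same product up to a sign change on each factor). Therefore the correction term $\sgn(b'_{ik})[b'_{ik} b'_{kj}]_+$ used to compute the double mutation is exactly the negative of the correction term $\sgn(b_{ik})[b_{ik}b_{kj}]_+$ added to produce $b'_{ij}$, and these cancel, returning $b_{ij}$. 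The rows/columns indexed by $k$ are handled separately and trivially by the first case of \eqref{eq:matrix mut}.

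For the cluster (resp. $Y$-seed), the non-$k$ entries are fixed in both mutations, so only $x''_k$ (resp. $y''_k$ and $y''_j$ for $j\ne k$) requires checking. The cluster case amounts to verifying that the numerator of the exchange relation is symmetric under swapping $[b_{ik}]_+$ and $[-b_{ik}]_+$, so after dividing by $x'_k$ the two binomials cancel. For the $Y$-seed case, $y''_k = (y_k^{-1})^{-1} = y_k$ is immediate; for $j \neq k$, I use $b'_{kj}=-b_{kj}$ and $y'_k=y_k^{-1}$ to rewrite
\eqn{
(1 \oplus (y'_k)^{-\sgn(b'_{kj})})^{-b'_{kj}} = (1 \oplus y_k^{-\sgn(b_{kj})})^{b_{kj}},
}
which is exactly the inverse of the factor multiplying $y_j$ in the definition of $y'_j$, so they cancel to leave $y''_j = y_j$.

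I do not expect any real obstacle: the proof is a bookkeeping exercise, and the only subtlety is that the sign conventions conspire correctly—specifically, that $y_k \mapsto y_k^{-1}$ combined with $b_{kj} \mapsto -b_{kj}$ leaves the quantity $y_k^{-\sgn(b_{kj})}$ invariant while flipping the sign of the exponent $-b_{kj}$, which is exactly what is needed for the correction factor to be inverted on the second mutation.
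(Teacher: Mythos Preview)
Your direct verification is correct and is the standard argument. The paper itself states this lemma without proof, treating it as a well-known elementary fact about cluster mutations; your computation is exactly the routine check one would supply if a proof were required.
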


To construct cluster patterns and $Y$-patterns of rank $n$ one has to consider the $n$-regular tree $\T^n$.
The edges of $\T^n$ are labeled by the numbers $1,\dots,n$ in such a way that the $n$ edges incident to the same vertex receive different labels. 
We write $v\in \T^n$ to express that $v$ is a vertex of $\T^n$. Moreover, we assume that $\T^n$ is endowed with a distinguished vertex $v_0$ called the \emph{initial vertex}.

\begin{definition}
\thlabel{cluster pattern}
A \emph{cluster pattern} (resp. $Y$-\emph{pattern}) \emph{of rank} $n$ is an assignment
${v\mapsto (\mathbf x_v, B_v)}$ (resp. $v\mapsto (\mathbf y_v,B_v)$) of a labeled seed (resp. labeled $Y$-seed) to each $v \in \T^n$ such that 
$(\mathbf x_{v'},B_{v'})$  (resp. $(\mathbf y_{v'},B_{v'})$) is the mutation in direction $k$ of $(\mathbf x_v,B_v)$ (resp. $(\mathbf y_v,B_v)$) whenever $v, v'\in \T^n$ are vertices joined by an edge labeled by $k$. 
The elements of a cluster pattern (resp. $Y$-pattern) are written as follows
\begin{equation*}
    \begin{array}{lcr}
    B_v=(b_{ij}^v), \phantom{xxx}
    \mathbf x_v=(x_{1;v},\dots,x_{n;v}),\phantom{xx} 
    \text{resp. }\phantom{xx} \mathbf y_v=(y_{1;v} ,\dots,y_{n;v}).
\end{array}
\end{equation*}
\end{definition}

\begin{remark}
Notice that a cluster (and similarly a $Y$-pattern) is completely determined by its value at a single vertex. 
In particular, considering an initial assignment $v_0 \mapsto ((y_1,\dots , y_n), B_{v_0}) $ of a $Y$-seed to the initial vertex $v_0$, the rest of the $Y$-pattern is obtained by iterating the mutation formula (\ref{eq:Y-pattern_mutation}).
We will later make use of this observation.
\end{remark}

%%%%%%%%%%%%%%%%%%%%%% PATTERNS WITH COEFFICIENTS %%%%%%%%%%%%%%%%%%%%

\subsection{Reminders on cluster patterns with coefficients}\label{sec:coeff_cluster}

In this section we recall Fomin and Zelevinsky's extension of the notion of cluster pattern to cluster pattern with coefficients as introduced in \cite{FZ_clustersIV}. We want to treat cluster patterns and $Y$-patterns on equal footing in the sense that both kinds of patterns can be endowed with coefficients. Therefore, we will make very small changes to the original nomenclature used by Fomin and Zelevinsky.

\begin{definition}\thlabel{def:seed_coeff}
A (skew-symmetrizable) \emph{labeled seed with coefficients}\footnote{Observe that this is precisely \cite[Defintion 2.3]{FZ_clustersIV} where $(\mathbf x, \bp,B)$ is simply referred to as a \emph{labeled seed}. We slightly modify the nomenclature in order to distinguish from the coefficient-free case introduced in \thref{def:seed}.} is a tuple $(\mathbf x, \bp,B)$:
\begin{itemize}
    \item $(\bp,B)$ is a labeled $Y$-seed in $\PP$,
    \item $\mathbf x=(x_1,\dots,x_n)$ is an $n$-tuple of elements in the ambient field $\mathcal F$ forming a free generating set, i.e. ${\mathcal{F}\cong \Q\PP(x_1,\dots,x_n)}$.
\end{itemize}
\end{definition}

\begin{remark}\thlabel{rmk:y vs p}
In subsection \S\ref{y_pat_ceof} we will combine two $Y$-patterns $v\mapsto (\mathbf y_v,B_v)$ and $v\mapsto (\mathbf p_v,B_v)$ to obtain the notion of a $Y$-pattern with coefficients. The two $Y$-patterns play different (non-symmetric) roles in \thref{Y-pattern with coeff}: one is the coefficient $Y$-pattern, for which we reserve the notation $(\bp, B)$, while the other one is the $Y$-pattern that will be endowed with coefficients, denoted by $(\by,B)$.
This explains the use of $(\bp,B)$ in \thref{def:seed_coeff} opposed to $({\bf y},B)$.
\end{remark}

\begin{definition}\thlabel{def:cluster_patt_coeff}
Let $(\mathbf x,\bp,B)$ be a labeled seed with coefficients and take $k\in[1,n]$ with ${\bp=(p_1,\dots, p_n)}$. The \emph{mutation in direction $k$ of} $(\mathbf x,\bp,B)$  is the labeled seed with coefficients $(\mathbf x',\bp',B')$, where $(\bp',B')$ is the mutation in direction $k$ of the $Y$-seed $(\bp,B)$ and $\mathbf x'=(x_1',\dots,x_n')$ with $x'_j:=x_j$ for $j\not=k$ and
        \begin{eqnarray}
        x_k':=\frac{\displaystyle p_k^-%\cdot 
\prod_{i:b_{ik}<0} x^{ -b_{ik} }_{i}  +p_k^+%\cdot
\prod_{i:b_{ik}>0} x^{ b_{ik} }_{i}}{x_k}\in \mathcal F.
        \end{eqnarray}
%\\[5pt]
\end{definition}

\begin{definition}
\thlabel{cluster pattern coeff}
A \emph{cluster pattern with coefficients of rank $n$} is an assignment of a labeled seed with coefficients ${(\mathbf x_v,\bp_v, B_v)}$ to each  ${v \in \T^n}$ such that $(\mathbf x_{v'},\bp_{v'},B_{v'})$ is the mutation in direction $k$ of $(\mathbf x_v,\bp_v,B_v)$ whenever $v,  v'\in \T^n$ are vertices joined by an edge labeled by $k$. The elements of a cluster pattern with coefficients are written as
\begin{equation*}
    \begin{array}{lcr}
    \mathbf x_v=(x_{1;v},\dots,x_{n;v}), \phantom{xxx}
    \bp_v=(p_{1;v} ,\dots,p_{n;v}) \phantom{xxx} & \text{and} & \phantom{xxx} B_v=(b_{ij}^v).
\end{array}
\end{equation*}
\end{definition} 

The following theorem is \cite[Theorem 3.1]{FZ_clustersI}.

\begin{theorem}[Laurent phenomenon]
\thlabel{Laurent_phenomenon}
Let $v\mapsto ( \mathbf{x}_v,\bp_v, B_v)$ be a cluster pattern with coefficients and $\mathbf x_{v_0}=(x_1,\dots, x_n)$ the initial cluster. Then for $j\in [1,n]$ and $v\in \T^n$:
\[ 
x_{j;v}\in \Z\PP[x_1^{\pm 1},\dots, x_n^{\pm 1}].
\] 
\end{theorem}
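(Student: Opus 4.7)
The plan is to induct on the distance $d := d(v_0, v)$ in the $n$-regular tree $\T^n$. The base case $v = v_0$ is tautological. For $d \geq 1$, let $v'$ be the neighbor of $v$ lying on the shortest path from $v_0$, joined to $v$ by an edge labeled $k$. For $j \neq k$ we have $x_{j;v} = x_{j;v'}$, so the Laurent property in the initial cluster is immediate from the inductive hypothesis. The substantive content is therefore concentrated at $j=k$: the exchange relation expresses $x_{k;v}$ as a Laurent polynomial in the cluster $\mathbf x_{v'}$ with coefficients in $\Z\PP$, and after substituting the inductive Laurent expansions of the entries of $\mathbf x_{v'}$ one obtains a priori only a rational function in $x_1, \dots, x_n$. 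One must rule out spurious denominators.

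The standard mechanism for doing this is Fomin--Zelevinsky's \emph{caterpillar lemma} (\cite{FZ_clustersI}, \S 3). The naive single-direction induction is too weak and must be replaced by a stronger statement formulated along a caterpillar sub-tree of $\T^n$: one simultaneously tracks Laurentness of the candidate expansion with respect to several clusters adjacent to a fixed edge, and propagates the property along two-step mutation sequences. The core algebraic input, which I would transcribe to the present coefficient setting, is a coprimality statement: each exchange binomial
\[
p_k^{-}\prod_{i:\,b_{ik}<0} x_i^{-b_{ik}} \; +\; p_k^{+}\prod_{i:\,b_{ik}>0} x_i^{b_{ik}}
\]
is coprime, as an element of the polynomial ring $\Z\PP[x_1,\dots,x_n]$, to each of the variables $x_1, \dots, x_n$. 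Granting this, the caterpillar argument propagates Laurentness from the trivial one-step case (in which $x_{k;v_1}$ is manifestly a Laurent polynomial in both $\mathbf x_{v_0}$, by the mutation formula, and in $\mathbf x_{v_1}$, as one of its variables) to every vertex of $\T^n$ by induction on the length of the mutation sequence.

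The main obstacle is the coprimality check in the presence of coefficients. Coprimality with $x_k$ itself is automatic, since $b_{kk}=0$ implies that neither monomial in the binomial involves $x_k$. For $i \neq k$, at most one of $[b_{ik}]_+$ and $[-b_{ik}]_+$ is positive, so at most one of the two monomials is divisible by $x_i$; the other monomial is weighted by $p_k^+$ or $p_k^-$, both of which, by \thref{p_+-}, are units in $\Q\PP$. Using that $\Z\PP$ is a domain (as recorded after \thref{exp:univ_sf}) and that $\PP$ sits inside its group of units, these coefficients cannot conspire to create a common factor of the binomial with $x_i$. With coprimality secured, the caterpillar induction concludes the proof exactly as in \cite{FZ_clustersI}, now with all coefficients living in $\Z\PP$ rather than $\Z$.
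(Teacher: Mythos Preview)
The paper does not supply a proof of this statement; it simply records it as \cite[Theorem 3.1]{FZ_clustersI} and moves on. Your proposal correctly identifies and outlines the caterpillar-lemma argument from that reference, so in substance you are reconstructing Fomin--Zelevinsky's original proof rather than offering an alternative to anything in the present paper.

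One small remark on your sketch: the coprimality input needed for the caterpillar lemma is slightly more than coprimality of each exchange binomial with the cluster variables~$x_i$. One also needs that the exchange polynomials attached to two consecutive edges along the caterpillar spine are coprime to each other (after the appropriate variable substitution). In the normalized-coefficient setting this follows from the same observation you use---that the two monomials in an exchange binomial share no common variable factor and carry unit coefficients from $\PP \subset (\Z\PP)^\times$---but it is worth stating explicitly, since the induction in \cite{FZ_clustersI} hinges on it.
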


\begin{definition}
A cluster pattern with \emph{principal coefficients} at vertex $v_0$ 
refers to a cluster pattern with coefficients $v\mapsto (\mathbf x_v,\mathbf p_v,B_v)$ with $\PP=\text{Trop}(p_{1},\dots,p_{n})$ as underlying semifield, where $\bp_{v_0}= (p_{1},\dots,p_n)$.
\end{definition}

\begin{definition}
The \emph{Laurent polynomials} 
\[
X^{\text{FZ}}_{j;v}\in\Z[x_1^{\pm 1},\dots,x_n^{\pm 1};p_1,\dots,p_n]
\]
are the cluster variables of a cluster pattern with principal coefficients (by \cite[Proposition~3.6]{FZ_clustersIV}). Further, we have the \emph{$F$-polynomials} $F_{j;v}$ defined as
\begin{eqnarray}\label{F-polynomial}
F_{j;v}(p_1,\dots,p_n) := X^{\text{FZ}}_{j;v}(1,\dots,1;p_1,\dots,p_n) \in \Z[p_1,\dots,p_n].
\end{eqnarray} 
\end{definition}

\begin{remark}
The superscript FZ is used to avoid confusion: in the next sections we adopt the standard geometric notation in which cluster variables are denoted by $A$ and $y$-variables by $X$. In \S\ref{sec:cluster_duality} we provide an explicit dictionary between the algebraic and geometric framework. See \thref{rem:notation}, \thref{dictionary}, \thref{dictionary_2} and \thref{tilde_extension}.  
\end{remark}

\begin{definition}
\thlabel{g-vectors}
(\cite[Proposition 6.1]{FZ_clustersIV})
Every Laurent polynomial $X_{j;v}^{\text{FZ}}$ is homogeneous with respect to the $\Z^n$-grading in $\Z[x_1^{\pm 1}, \dots , x_n^{\pm 1};p_1,\dots , p_n]$ given by deg$(x_i)={\bf e}_i$ and deg$(p_j)=-{\bf b}_j$, where ${\bf e}_1,\dots , {\bf e}_n$ is the standard basis of $\Z^n$ and ${\bf b}_j$ is the $j^\text{th}$ column of $B_{v_0}$. The degree of $X^{\text{FZ}}_{j;v}$ is called the ${\bf g}$-\emph{vector} associated to $j$ and $v$ denoted
\[
{\bf g}_{j;v}\in \Z^n.
\]
\end{definition}

%%%%%%%%%%%%%%%%%%%%%%%%%%%%% Y PATTERN WITH COEFFICIENTS %%%%%%%%%%%%%

\subsection{$Y$-patterns with coefficients}\label{y_pat_ceof}
In analogy with the previous section we introduce a generalization of $Y$-pattern to $Y$-pattern with coefficients.
We study their periodicities in \S\ref{sec:separation formulas} and obtain separation formulas for $Y$-patterns with coefficients which turn out to be very similar to the separation formulas for cluster patterns with coefficients presented in  \cite[\S3]{FZ_clustersIV}.

To have labeled $Y$-seeds on equal footing with labeled seeds we endow them similarly with coefficients as follows.

\begin{definition}\thlabel{def Y seed with coeff}
A (skew-symmetrizable) \emph{labeled $Y$-seed with coefficients} (in $\PP$) is a triple $(\by,\bp,B)$, where
\begin{itemize}
    \item $(\bp,B)$ is a $Y$-seed in $\PP$,
    \item $(\by, B)$ is a $Y$-seed in some universal semifield $ \Q\PP_{\text{sf}}(u_1,\dots, u_\ell)$.  
\end{itemize}
\end{definition}

We define analogously the mutation of labeled $Y$-seeds with coefficients below.  

\begin{definition}
\thlabel{main_definition}
Let $(\by,\bp,B)$ be a labeled $Y$-seed with coefficients, where as above $\by=(y_1,\dots,y_n)$, $\bp= (p_1,\dots,p_n)$ and $B=(b_{ij})$. 
For $k\in [1,n]$, \emph{the mutation in direction $k $ of} $(\bf{y},\bf{p},B)$ is the labeled $Y$-seed with coefficients  $(\by',\bp',B')$, where
\begin{itemize}
    \item $(\bp',B')$ is the mutation in direction $k$ of $(\bp,B)$, and
    \item $\by'=(y_1',\dots,y_n')$, where
\begin{eqnarray}\label{eq:genralformula}
    y'_j := \left\{\begin{matrix} y_j^{-1} &\text{ if } j=k, \vspace{1mm} \\
    y_j \lrp{ p_k^{\llbracket b_{kj}\rrbracket} +  p_k^{\llbracket -b_{kj}\rrbracket} y_k^{-\sgn(b_{kj})} } ^{-b_{kj}} &\text{ if } j\not =k.\end{matrix}\right.
\end{eqnarray}
\end{itemize}
See \thref{p_+-} for the $p^{\llbracket x \rrbracket}$ notation.
\end{definition}

\begin{remark}
\thlabel{unlabeling}
Unlabeled $Y$-seeds with coefficients are obtained by identifying labeled $Y$-seeds with coefficients that differ from each other by simultaneous permutations of the components in $\bp$, in  $\by$, and of the rows and columns of $B$. In a similar fashion, one obtains unlabeled seeds, unlabeled $Y$-seeds and unlabeled seeds with coefficients as in \cite[Definition 4.1]{FZ_clustersIV}. 
\end{remark}

It is straightforward to prove that mutation of $Y$-seeds with coefficients is involutive. This implies that the following notion of $Y$-pattern with coefficients is well defined. 
The underlying semifield for such a pattern is $\Q\PP_{\text{sf}}(u_1,\dots,u_\ell)$.

\begin{definition}\thlabel{Y-pattern with coeff}
A $Y$-\emph{pattern with coefficients} is an assignment $ v \mapsto (\by_v , \bp_v, B_v) $ of a labeled $Y$-seed with coefficients to each  $v \in \T^n$ such that $(\by_{v'},\bp_{v'},B_{v'})$ is the mutation in direction $k$ of $(\by_v, \bp_v,B_v)$ whenever $v,  v'\in \T^n$ are vertices joined by an edge labeled by $k$. The components $\by_v$ are written as $(y_{1;v},\dots,y_{n;v})$.
\end{definition}

A particular choice of coefficients for cluster patterns is introduced in \cite{FZ_clustersIV}. We extend it to $Y$-patterns:

\begin{definition}
A $Y$-pattern with \emph{principal coefficients} at vertex $v_0$ 
refers to a $Y$-pattern with coefficients $v\mapsto (\mathbf y_v,\mathbf p_v,B_v)$ with $\PP=\text{Trop}(p_{1},\dots,p_{n})$ as underlying semifield for coefficients, where $\bp_{v_0}= (p_{1},\dots,p_n)$.
\end{definition}

Recall from \cite[Definition~3.10]{FZ_clustersIV} the rational functions for $Y$-patterns
\begin{eqnarray}\label{Y-rational function}
Y_{j;v}\in \Q_{\text{sf}}(y_1,\dots,y_n).
\end{eqnarray}
They are the components of the (coefficient-free) $Y$-pattern $ v\mapsto ((Y_{1;v},\dots , Y_{n;v}),B_v)$ with values in $\Q_{\text{sf}}(y_1,\dots,y_n)$ and $((y_1,\dots, y_n),B^0)$ as initial labeled $Y$-seed at $v_0$.
In analogy, we have the following definition for the case with coefficients.
We fix for the rest of the section an initial exchange matrix $B^0$. 

\begin{definition}
\thlabel{tildeY-pattern}
Fix an initial $Y$-seed of coefficients $((p_1,\dots,p_n),B^0)$. We define the rational functions 
\begin{eqnarray}\label{tilde Y rational function}
\widetilde{Y}_{j;v}\in \Q\PP_{\text{sf}}(y_1,\dots,y_n)
\end{eqnarray}
as the components of the $Y$-pattern with coefficients $ v\mapsto ((\widetilde{Y}_{1;v},\dots , \widetilde{Y}_{n;v}),\bp_v,B_v)$ whose initial labeled $Y$-seed with coefficients is $((y_1,\dots, y_n),(p_1,\dots, p_n),B^0)$ at $v_0$. Notice that $\widetilde{Y}_{j;v}$ depends on $((p_1,\dots,p_n),B^0)$. Nevertheless, we will omit this in the notation since we think of $((p_1,\dots,p_n),B^0)$ as fixed once and for all. We illustrate this definition in \thref{ex:A2_Y-tildes}.
\end{definition}

In later sections we will specify this definition to the case of $Y$-patterns with principal coefficients. To make notation lighter we denote $\Q \PP_{\text{sf}}(y_1,\dots, y_n)$ by $\mathcal{S}$ from now on. The following result provides a way to compute $\widetilde{Y}_{j;v}$ from $Y_{j;v}^{\mathcal{S}}$ and $p_{j;v}$  (see \thref{universal_ev}). 

\begin{lemma}
\thlabel{separation}
Fix an initial vertex $v_0$ and let $p_j:=p_{j;v_0}$ for all $1\leq j \leq n $. Then for every vertex $v\in \T^n$ and every $j\in [1,n]$ we have 
\begin{equation}
\label{general_separation_formula}
    \widetilde{Y}_{j;v}=\dfrac{Y_{j;v}^{\mathcal{S}}(p_1y_1,\dots,p_ny_n)}{p_{j;v}}.
\end{equation}
\end{lemma}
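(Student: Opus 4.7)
\medskip

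\noindent\textbf{Proof plan.} My plan is to proceed by induction on the distance $d(v,v_0)$ in $\T^n$ from the initial vertex.

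For the base case $v = v_0$ we have $\widetilde{Y}_{j;v_0} = y_j$, $Y_{j;v_0} = y_j$ and $p_{j;v_0} = p_j$, so the right-hand side of \eqref{general_separation_formula} evaluates to $(p_j y_j)/p_j = y_j$, matching the left.

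For the inductive step, let $v$ and $v'$ be joined by an edge labeled $k$, with the formula assumed at $v$. I would split into two cases according to \thref{main_definition}. When $j = k$, both $\widetilde{Y}_{k;v'} = \widetilde{Y}_{k;v}^{-1}$ and $Y_{k;v'} = Y_{k;v}^{-1}$, while $p_{k;v'} = p_{k;v}^{-1}$ by the standard $Y$-seed mutation rule \eqref{eq:Y-pattern_mutation}; the inductive hypothesis then yields the claim by inverting both sides. When $j \neq k$, the coefficient-free mutation \eqref{eq:Y-pattern_mutation} and the mutation with coefficients \eqref{eq:genralformula} give
\[
Y_{j;v'} = Y_{j;v}\bigl(1 \oplus Y_{k;v}^{-\sgn(b_{kj})}\bigr)^{-b_{kj}}, \qquad \widetilde{Y}_{j;v'} = \widetilde{Y}_{j;v}\bigl(p_{k;v}^{\llbracket b_{kj}\rrbracket} + p_{k;v}^{\llbracket -b_{kj}\rrbracket} \widetilde{Y}_{k;v}^{-\sgn(b_{kj})}\bigr)^{-b_{kj}},
\]
and $p_{j;v'} = p_{j;v}(1 \oplus p_{k;v}^{-\sgn(b_{kj})})^{-b_{kj}}$. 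Since the auxiliary addition in both $\Q_{\text{sf}}(y_1,\dots,y_n)$ and $\mathcal{S}$ is ordinary $+$, the universal property of \thref{universal_property} ensures that evaluation $Y_{j;v}\mapsto Y_{j;v}^{\mathcal{S}}(p_1y_1,\dots,p_ny_n)$ is a semifield homomorphism, so it commutes with the mutation formula for $Y_{j;v'}$.

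Feeding in the inductive hypothesis $Y_{i;v}^{\mathcal{S}}(p_1y_1,\dots,p_ny_n) = p_{i;v}\widetilde{Y}_{i;v}$ for $i=j,k$, the ratio becomes
\[
\frac{Y_{j;v'}^{\mathcal{S}}(p_1y_1,\dots,p_ny_n)}{p_{j;v'}} = \widetilde{Y}_{j;v} \cdot \left(\frac{1 + (p_{k;v}\widetilde{Y}_{k;v})^{-\sgn(b_{kj})}}{1 \oplus p_{k;v}^{-\sgn(b_{kj})}}\right)^{-b_{kj}},
\]
so the task reduces to the semifield identity
\[
\frac{1 + (p_{k;v}\widetilde{Y}_{k;v})^{-\sgn(b_{kj})}}{1 \oplus p_{k;v}^{-\sgn(b_{kj})}} = p_{k;v}^{\llbracket b_{kj}\rrbracket} + p_{k;v}^{\llbracket -b_{kj}\rrbracket} \widetilde{Y}_{k;v}^{-\sgn(b_{kj})}.
\]
This is verified by a brief case analysis on $\sgn(b_{kj})$, using distributivity together with the elementary semifield identity $p(1 \oplus p^{-1}) = p \oplus 1$ and the defining expressions \eqref{eq:p+-} for $p^+$ and $p^-$ in \thref{p_+-}. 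For instance, when $b_{kj} > 0$ one multiplies numerator and denominator by $p_{k;v}$ and recognizes $p_{k;v}/(p_{k;v}\oplus 1) = p_{k;v}^+$ and $1/(p_{k;v}\oplus 1) = p_{k;v}^-$. The case $b_{kj} < 0$ is symmetric, and $b_{kj} = 0$ is immediate since both factors collapse to $1$ and all three mutation rules fix the $j$-th component.

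The main obstacle is simply bookkeeping: the statement mixes a rational function in $\mathcal{S}$, its specialization, the abstract $\oplus$ of $\PP$, and the ordinary $+$ of the universal semifield, so care is required to apply \thref{universal_property QPsf} in the correct direction at each step. Once the reduction to the boxed semifield identity is achieved, the remainder is routine.
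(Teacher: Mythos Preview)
Your proposal is correct and follows essentially the same approach as the paper: induction on distance from $v_0$ in $\T^n$, the $j=k$ case handled by inversion, and for $j\neq k$ a case split on $\sgn(b_{kj})$ using the definitions of $p^+$ and $p^-$. The only cosmetic difference is that you isolate the key semifield identity explicitly before verifying it, whereas the paper computes $Y^{\mathcal S}_{j;v'}(p_1y_1,\dots,p_ny_n)/p_{j;v'}$ directly in each sign case; the substance is the same.
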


\begin{proof}
The statement is clear for the initial vertex $v_0$ and all $i\in [1,n]$. 
Let $v,v'\in\mathbb T^n$ be connected by an edge with label $k$ and assume that \eqref{general_separation_formula} holds for $v$ and for all $j$. 
We have to prove that \eqref{general_separation_formula} also holds for $v'$ and all $j$.
The case $j=k$ follows from $p_{j;v'}=p_{j;v}^{-1}$, $Y_{j;v'}=Y_{j;v}^{-1}$ and $ \widetilde{Y}_{j;v'}= \widetilde{Y}_{j;v}^{-1}$. 
So we assume that $j\neq k$. Since (\ref{general_separation_formula}) holds for $v$ we have the following
\begin{align*}
Y^{\mathcal{S}}_{j;v'}(p_1y_1,\dots,p_ny_n)&=Y^{\mathcal{S}}_{j;v}(p_1y_1,\dots,p_ny_n)(1+Y^{\mathcal{S}}_{k;v}(p_1y_1,\dots,p_ny_n)^{-\sgn(b_{kj})})^{-b_{kj}}\\
&= p_{j;v}\widetilde{Y}_{j;v}(1+(p_{k;v}\widetilde{Y}_{k;v})^{-\sgn(b_{kj})})^{-b_{kj}}.
\end{align*}
We distinguish two cases:
\begin{itemize}
    \item[\bf Case 1:] If $b_{kj}\le 0$ we apply the $Y$-pattern mutation formula (\ref{eq:Y-pattern_mutation}) and 
obtain
\begin{equation*}
\dfrac{Y^{\mathcal{S}}_{j;v'}(p_1y_1,\dots,p_ny_n)}{p_{j,v'}}= \dfrac{\widetilde{Y}_{j;v}(1+p_{k;v}\widetilde{Y}_{k;v})^{-b_{kj}}}{(1\oplus p_{k;v})^{-b_{kj}}}=\widetilde{Y}_{j;v}(p^-_{k;v}+p^+_{k;v}\widetilde{Y}_{k;v})^{-b_{kj}},
\end{equation*}
which is exactly what we wanted to prove.

\item[\bf Case 2:] If $b_{jk}>0$ we proceed in an analogous way.
\end{itemize}
\end{proof}

\begin{example}
\thlabel{ex:A2_Y-tildes}
We identify the vertices of the 2-regular tree $\T^2$ with the integers by fixing $v_0=0$. 
In Table~\ref{tab:A2} we illustrate both, a $Y$-pattern and a $Y$-pattern with coefficients in type $A_2$ (i.e. $B^0$ corresponds to an orientations of a Dynkin diagram of type $A_2$). 
Computing only $5$ mutations of the initial $Y$-seed with coefficients, we observe that for all $i\in \Z$ the labeled Y-seeds (with and without coefficients) at a vertex $i$ and $i+5$ are equal as unlabeled $Y$-seeds.
\end{example}

\begin{table}[!htbp]
\label{example:A_2}
    \centering
    \begin{tabular}{|c|c|c|c|c|c|}
    \hline
         $v$ & $B_v$ & $p_{1;v}$ &$p_{2;v}$ & $\widetilde{Y}_{1;v}$ & $\widetilde{Y}_{2;v}$ \\
         \hline \hline
          
          0  & $\left(\begin{smallmatrix} 0& 1 \\ -1 &0\end{smallmatrix}\right)$ & 
          $p_1$& 
          $ p_2$&
          $y_1$ & 
          $y_2$  \\  \hline
          $\updownarrow \mu_2$ \\ \hline
          1  & $\left(\begin{smallmatrix} 0& -1 \\ 1 &0\end{smallmatrix}\right)$ & 
          $p_1(p_2\oplus 1)$  &
          $\dfrac{1}{p_2} $&
          $\dfrac{y_1(p_2y_2+1)}{p_2\oplus 1}$ & 
          $\dfrac{1}{y_2}$ \\ \hline
          $\updownarrow \mu_1$ \\ \hline
          2  & $\left(\begin{smallmatrix} 0& 1 \\ -1 &0\end{smallmatrix}\right)$ & 
          $\dfrac{1}{p_1(p_2\oplus 1)}$&
          $\dfrac{p_1p_2\oplus p_1 \oplus 1}{p_2}$ &
          $\dfrac{p_2\oplus 1}{y_1(p_2y_2+1)}$ &
          $\dfrac{p_1p_2y_1y_2+p_1y_1+1}{y_2(p_1p_2\oplus p_1\oplus 1)} $  \\ \hline
          $\updownarrow \mu_2$ \\ \hline
          3  & $\left(\begin{smallmatrix} 0& -1 \\ 1 &0\end{smallmatrix}\right)$ & 
          $\dfrac{p_1\oplus 1}{p_1 p_2}$&
          $\dfrac{p_2}{p_1p_2\oplus p_1\oplus 1} $&
          $\dfrac{p_1y_1+1}{y_1y_2(p_1\oplus 1)}$ &
          $\dfrac{y_2(p_1p_2\oplus p_1\oplus 1)}{p_1p_2y_1y_2+p_1y_1+1}$ \\ \hline
          $\updownarrow \mu_1$ \\ \hline
          4  & $\left(\begin{smallmatrix} 0& 1 \\ -1 &0\end{smallmatrix}\right)$ & 
          $\dfrac{p_1p_2}{p_1\oplus 1}$&
          $\dfrac{1}{p_1} $ &
          $\dfrac{y_1y_2(p_1\oplus 1)}{p_1y_1+1}$ &
          $\dfrac{1}{y_1}$  \\ \hline
          $\updownarrow \mu_2$ \\ \hline
          5 & $\left(\begin{smallmatrix} 0& -1 \\ 1 &0\end{smallmatrix}\right)$ & 
          $p_2$ &
          $p_1$ &
          $y_2$ & 
          $y_1$ \\ \hline
    \end{tabular}
    \caption{A $Y$-pattern $v\mapsto ((p_{1;v},p_{2;v}),B_v)$ and the corresponding $Y$-pattern with coefficients $v\mapsto ((\widetilde{Y}_{1;v},\widetilde{Y}_{2;v}),(p_{1;v},p_{2;v}),B_v)$ in type $A_2$ introduced in \thref{tildeY-pattern} (see also \thref{rmk:y vs p}). Observe that the first four columns coincide with the first three columns of \cite[Table 1]{FZ_clustersIV} upon change of notation.
    }
    \label{tab:A2}
\end{table}

%%%%%%%%%%%%%%%%%%%%%%%%%%%%%%%%%%%%%% SEPARATION FORMULAE %%%%%%%%%%%%%%%%%%%%%%%

\subsection{Separation formulas and periodicities}\label{sec:separation formulas}
In this section we analyze properties of $Y$-patterns with coefficients in complete generality (unless specified otherwise).

We apply the universal property of $\Q_{\text{sf}}(y_1,\dots,y_n)$ to the tropical semifield with $ y_1,\dots,y_n \in \text{Trop}(y_1,\dots,y_n)$.
As elements of $\text{Trop}(y_1,\dots, y_n)$ are Laurent monomials in $y_1,\dots,y_n$,
the rational function $Y_{j;v}$ in \eqref{Y-rational function} yield Laurent monomials:
\begin{eqnarray}\label{defining c vector}
Y_{j;v}^{\text{Trop}(y_1,\dots, y_n)} (y_1,\dots, y_n )= \prod_{i=1}^n y_i^{c_{ij;v}}
 =:{\mathbf y}^{\mathbf c_{j;v}}.
\end{eqnarray}

\begin{definition}
\thlabel{c-vectors}
The vector ${\mathbf c_{j;v}}=(c_{1j;v},\dots,c_{nj;v})\in\mathbb Z^n$ in \eqref{defining c vector} is called the 
{\bf c}\emph{-vector} associated to $j$ and $v$.
\end{definition}

As a consequence of \thref{separation} we obtain for $Y$-patterns the analog of Fomin and Zelevinsky's separation formulas for cluster patterns. To state this result we consider the $F$-polynomials in \eqref{F-polynomial} associated to the initial matrix $B$. 

\begin{theorem}[Separation formulas]\thlabel{separation_formulas}
For every $j$ and $v$ we have the following
\begin{equation}
\label{general_separation_formula_2}
    \widetilde{Y}_{j;v}=
    \left(
    \prod^n_{i=1}(F_{i;v}^{-b^v_{ij}})^{\PP}(p_1,\dots, p_n)\right)
    \left(
    \prod^n_{i=1}(F_{i;v}^{b^v_{ij}})^{\mathcal{S}}(p_1y_1,\dots,p_ny_n)\right)
    \mathbf{y}^{{\mathbf c_{j;v}}}.
\end{equation}
\end{theorem}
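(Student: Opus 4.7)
The plan is to reduce the identity, via \thref{separation}, to a single ``master identity'' in $\Q_{\text{sf}}(y_1,\dots,y_n)$ that can then be specialized in two complementary ways so as to handle both the numerator and the denominator appearing there.

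The first step is to establish the coefficient-free identity
\begin{equation*}
Y_{j;v}(y_1,\dots,y_n)=\mathbf{y}^{\mathbf c_{j;v}}\prod_{i=1}^n F_{i;v}(y_1,\dots,y_n)^{b_{ij}^v}
\end{equation*}
as an equality in $\Q_{\text{sf}}(y_1,\dots,y_n)\subset \mathcal{S}$. I would prove this by induction on the distance of $v$ from $v_0$: the base case is immediate since $Y_{j;v_0}=y_j$, $F_{i;v_0}=1$, and $\mathbf c_{j;v_0}=\mathbf e_j$; the inductive step combines the coefficient-free $Y$-pattern mutation rule \eqref{eq:Y-pattern_mutation}, the $F$-polynomial recurrence of \cite[Prop.~5.1]{FZ_clustersIV}, and the recurrence for $\mathbf c$-vectors obtained by tropicalizing \eqref{eq:Y-pattern_mutation} at $\text{Trop}(y_1,\dots,y_n)$ in the sense of \eqref{defining c vector}.

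Next, I would specialize this master identity in two ways using \thref{universal_property QPsf}. Pulling back along the semifield morphism that sends $y_i\mapsto p_i$ into $\PP$ yields
\begin{equation*}
p_{j;v}=\mathbf{p}^{\mathbf c_{j;v}}\prod_{i=1}^n F_{i;v}^{\PP}(p_1,\dots,p_n)^{b_{ij}^v},
\end{equation*}
while pulling back along the morphism $y_i\mapsto p_iy_i$ inside $\mathcal{S}$ yields
\begin{equation*}
Y_{j;v}^{\mathcal{S}}(p_1y_1,\dots,p_ny_n)=\mathbf{p}^{\mathbf c_{j;v}}\mathbf{y}^{\mathbf c_{j;v}}\prod_{i=1}^n F_{i;v}^{\mathcal{S}}(p_1y_1,\dots,p_ny_n)^{b_{ij}^v}.
\end{equation*}

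Substituting these two expressions into the formula of \thref{separation}, namely $\widetilde{Y}_{j;v}=Y_{j;v}^{\mathcal{S}}(p_1y_1,\dots,p_ny_n)/p_{j;v}$, the common tropical factor $\mathbf{p}^{\mathbf c_{j;v}}$ cancels, and the remaining products rearrange to precisely the claimed formula \eqref{general_separation_formula_2}. The main obstacle is the inductive proof of the master identity: in $\Q_{\text{sf}}$ the $Y$-mutation rule mixes ordinary addition with its tropical shadow, so the inductive step requires careful bookkeeping of how the non-tropical addition inside the $F$-polynomial recurrence interacts with the tropical mutation of $\mathbf c$-vectors. Once this is settled, the remaining steps are purely formal manipulations justified by the universal property of universal semifields.
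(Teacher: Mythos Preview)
Your proposal is correct and follows essentially the same route as the paper: both arguments specialize the ``master identity'' $Y_{j;v}=\mathbf{y}^{\mathbf c_{j;v}}\prod_i F_{i;v}^{b^v_{ij}}$ once at $y_i\mapsto p_i$ in $\PP$ and once at $y_i\mapsto p_iy_i$ in $\mathcal{S}$, then divide via \thref{separation}. The only difference is that the paper does not reprove the master identity by induction but simply cites it as \cite[Proposition~3.13]{FZ_clustersIV}; this renders your ``main obstacle'' moot and collapses the proof to a few lines.
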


\begin{proof}
We denote the exchange matrix $B_v=(b_{ij})$ and by $F_{i;v}$ the $F$-polynomial associated to $i$ and $v$. Recall that by \cite[Proposition 3.13]{FZ_clustersIV} 
\begin{eqnarray*}
    Y_{j;v}= \prod_{i=1}^n y_i^{c_{ij;v}}
    \prod^n_{i=1}F_{i;v}^{b_{ij}}.
\end{eqnarray*}
Hence, the universal property of $\Q_{\text{sf}}(y_1,\dots,y_n)$ applied to $\mathcal S\ni p_1y_1,\dots,p_ny_n$ gives
\begin{equation}
\label{eq:sep_1}
    Y^{\mathcal{S}}_{j;v}(p_1y_1,\dots,p_ny_n)=\prod^n_{i=1}(p_iy_i)^{c_{ij;v}}\prod^n_{i=1}(F_{i;v}^{b_{ij}})^{\mathcal{S}}(p_1y_1,\dots,p_ny_n).
\end{equation}
By the universal property of $\Q_{\text{sf}}(y_1,\dots,y_n)$ applied to $\PP\ni p_1,\dots,p_n$ we have
\begin{equation}
\label{eq:sep_2}
p_{j;v}=\prod^n_{i=1}p_i^{c_{ij;v}}\prod^n_{i=1}(F_{i;v}^{b_{ij}})^{\PP}(p_1,\dots, p_n).  
\end{equation}
We obtain the desired formula dividing \eqref{eq:sep_1} by \eqref{eq:sep_2} and then using \eqref{general_separation_formula}.
\end{proof}

Note that only the first factor on the right hand side of \eqref{general_separation_formula_2} involves the auxiliary addition in $\PP$.
 
 \begin{theorem}
 \thlabel{periodicity}
The $Y$-pattern $v\mapsto ((Y_{1;v},\dots, Y_{n;v}),B_v)$ and the $Y$-pattern with coefficients $v \mapsto ((\widetilde{Y}_{1;v},\dots, \widetilde{Y}_{n;v}),\bp_v,B_v)$ share the same periodicities. That is, 
\[
Y_{i;v}=Y_{j;v'} \text{ if and only if } \widetilde{Y}_{i;v}=\widetilde{Y}_{j;v'}.
\]
\end{theorem}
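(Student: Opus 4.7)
The plan is to derive both implications from the separation formula
$\widetilde Y_{j;v} = p_{j;v}^{-1}\, Y_{j;v}^{\mathcal S}(p_1y_1,\dots,p_ny_n)$
of \thref{separation}, combined with two applications of the universal
property of $\Q_{\text{sf}}(y_1,\dots,y_n)$ (\thref{universal_property} and
\thref{universal_property QPsf}).

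For the forward implication, suppose $Y_{i;v}=Y_{j;v'}$ in
$\Q_{\text{sf}}(y_1,\dots,y_n)$. Applied via $y_k\mapsto p_ky_k$ into
$\mathcal S=\Q\PP_{\text{sf}}(y_1,\dots,y_n)$, the universal property gives
$Y_{i;v}^{\mathcal S}(p_1y_1,\dots,p_ny_n)=Y_{j;v'}^{\mathcal S}(p_1y_1,\dots,p_ny_n)$;
applied via $y_k\mapsto p_k$ into $\PP$, it gives
$Y_{i;v}^{\PP}(p_1,\dots,p_n)=Y_{j;v'}^{\PP}(p_1,\dots,p_n)$. Since the
coefficient component $\bp_v$ of a $Y$-seed with coefficients evolves
under the same mutation rule as a coefficient-free $Y$-pattern in $\PP$
starting from $(p_1,\dots,p_n)$, the latter equality reads
$p_{i;v}=p_{j;v'}$. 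Dividing the first equality by this common element
and invoking \thref{separation} yields
$\widetilde Y_{i;v}=\widetilde Y_{j;v'}$.

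For the converse, assume $\widetilde Y_{i;v}=\widetilde Y_{j;v'}$ in
$\mathcal S$. Multiplying by $p_{i;v}p_{j;v'}$ and using
\thref{separation} on each side converts the hypothesis into the
identity
\begin{equation*}
p_{j;v'}\,Y_{i;v}^{\mathcal S}(p_1y_1,\dots,p_ny_n)
\;=\; p_{i;v}\,Y_{j;v'}^{\mathcal S}(p_1y_1,\dots,p_ny_n)
\end{equation*}
viewed inside the ambient field $\Q\PP(y_1,\dots,y_n)$. Write
$Y_{i;v}=f_i/g_i$ and $Y_{j;v'}=f_j/g_j$ with
$f_i,g_i,f_j,g_j\in\Z_{\ge 0}[y_1,\dots,y_n]$ and clear denominators to
obtain
$p_{j;v'}\,f_i(\mathbf p\mathbf y)g_j(\mathbf p\mathbf y)
 = p_{i;v}\,f_j(\mathbf p\mathbf y)g_i(\mathbf p\mathbf y)$
in $\Z\PP[y_1,\dots,y_n]$. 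Since the substitution $y_k\mapsto p_ky_k$
sends any monomial $\mathbf y^\alpha$ of a polynomial in $\mathbf y$ to
$\mathbf p^\alpha\mathbf y^\alpha$, the coefficient of $\mathbf y^\alpha$
takes the shape $N^{ij}_\alpha\, p_{j;v'}\mathbf p^\alpha$ on the left
and $N^{ji}_\alpha\, p_{i;v}\mathbf p^\alpha$ on the right, where
$N^{ij}_\alpha,N^{ji}_\alpha\in\Z_{\ge 0}$ are the standard convolutions
of the coefficients of $f_i,g_j$ and $f_j,g_i$, respectively. Because
$\PP$ embeds as a $\Z$-basis of the free $\Z$-module $\Z\PP$ (which is
furthermore a domain by \cite[p.~5]{FZ_clustersIV}) and $f_ig_j,f_jg_i$
are both nonzero, the uniqueness of group-ring expressions forces both
$p_{i;v}=p_{j;v'}$ and $N^{ij}_\alpha=N^{ji}_\alpha$ for every $\alpha$.
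The latter is precisely $f_ig_j=f_jg_i$ in $\Z[y_1,\dots,y_n]$, hence
$Y_{i;v}=Y_{j;v'}$ in $\Q_{\text{sf}}(y_1,\dots,y_n)$.

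The main obstacle is the backward direction, where one has to disentangle
the information carried by $\widetilde Y_{j;v}\in\mathcal S$ into a
\emph{coefficient part} living in $\PP$ and a \emph{variable part} living
in $\Q_{\text{sf}}(y_1,\dots,y_n)$. A specialization morphism
$\PP\to\{1\}$ would trivialize this step, but such a morphism is
available only when $\PP$ is a free tropical semifield (e.g.\ the
principal case); the monomial-by-monomial comparison inside the group
ring $\Z\PP[y_1,\dots,y_n]$ is precisely what realizes the separation
for an arbitrary $\PP$.
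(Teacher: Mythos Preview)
Your forward implication matches the paper's argument exactly.

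For the reverse implication your argument is correct, but it is considerably more laborious than the paper's.  The paper simply applies \thref{universal_property QPsf} with the set-map $\PP\to\Q_{\text{sf}}(y_1,\dots,y_n)$ sending every $p\in\PP$ to $1$, together with $y_i\mapsto y_i$.  The resulting semifield morphism $\Q\PP_{\text{sf}}(y_1,\dots,y_n)\to\Q_{\text{sf}}(y_1,\dots,y_n)$ carries each mutation with coefficients \eqref{eq:genralformula} to the coefficient-free mutation \eqref{eq:Y-pattern_mutation} (since every factor $p_k^{\llbracket\pm b_{kj}\rrbracket}\in\PP$ is sent to $1$), hence $\widetilde Y_{i;v}\mapsto Y_{i;v}$ for all $i,v$, and the converse is immediate.

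Your closing remark that ``such a morphism is available only when $\PP$ is a free tropical semifield'' is the one genuine misconception.  The specialization $p\mapsto 1$ is available for \emph{any} semifield $\PP$: \thref{universal_property QPsf} requires only a function of sets $\PP\to\PP'$, not a semifield morphism, and alternatively the augmentation ring map $\Z\PP\to\Z$, $\sum n_p p\mapsto\sum n_p$, exists for every group ring.  So the obstacle you designed your monomial-by-monomial comparison to circumvent is not actually present.  That said, your comparison in $\Z\PP[y_1,\dots,y_n]$ is a valid alternative: it yields the bonus conclusion $p_{i;v}=p_{j;v'}$ directly from $\widetilde Y_{i;v}=\widetilde Y_{j;v'}$, which the paper's argument does not extract explicitly.
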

 
\begin{proof}
Using the universal property of $\Q_{\text{sf}}(y_1,\dots,y_n)$ the coefficient $Y$-pattern given by ${v\mapsto (\bp_v,B_v)}$ can be obtained from the $Y$-pattern ${v \mapsto ((Y_{1;v},\dots, Y_{n;v}),B_v)}$ by specifying ${y_i\mapsto p_i}$. 
In particular, $Y_{i;v}=Y_{j;v'}$ implies $p_{i;v}=p_{j;v'}$. Now we can use \thref{separation} to conclude that $Y_{i;v}=Y_{j;v'}$ implies $\widetilde{Y}_{i;v}=\widetilde{Y}_{j;v'}$. 

For the reverse implication we use the universal property of $\Q\PP_{\text{sf}}(y_1,\dots,y_n)$ %in \thref{universal_property QPsf}
applied to $\Q_{\text{sf}}(y_1,\dots,y_n)$.
We fix $y_1,\dots,y_n\in\Q_{\text{sf}}(y_1,\dots,y_n)$ and the map $\PP\to\Q_{\text{sf}}(y_1,\dots,y_n)$ sending every $p\in \PP$ to $1$. 
The induced morphism of semifields $${\Q\PP_{\text{sf}}(y_1,\dots,y_n)\to \Q_{\text{sf}}(y_1,\dots,y_n)}$$ sends $\widetilde{Y}_{i;v}$ to $Y_{i;v}$ for all $i$ and $v$.
\end{proof}

%%%%%%%%%%%%%%%%%%%%%%%%%%%%% Geometric coefficients and frozen directions %%%%%%%%%%%%%
\subsection{Geometric coefficients and frozen directions}\label{geometric_coefficietns}

So far we have considered patterns in which all $n$ directions are mutable. However, it is very useful to consider patterns in which there are \emph{frozen} (or \emph{non-mutable}) directions. Indeed, this formalism can be used to describe in a simplified way cluster patterns with coefficients in a tropical semifield (see \thref{lem:geometric_and_frozen} below). 
Further, frozen directions can be used to construct partial compactifications of cluster varieties as discussed in \cite[\S9]{GHKK}.
In this subsection we let $m\in \mathbb{N} $ be an arbitrary positive integer.

\begin{definition}
A \emph{cluster pattern with coefficients of rank $n$ with $m$ frozen directions} is an assignment $v \mapsto ({\mathbf x}_v, \mathbf p_v, B_v)$, where 
\begin{itemize}
    \item $v\in \T^n$;
    \item ${\mathbf x}_v$ is a cluster of size $n+m$;
    \item ${\mathbf p}_v$ is a $Y$-seed of size $n+m$;
    \item $B_v$ is a skew-symmetrizable $(n+m)\times (n+m)$-matrix;
    \item for every pair of vertices
    $v,v'\in \T^n$ joined by an edge labelled by $k\in [1,n]$, we have that $({\mathbf x}_{v'}, B_{v'})= \mu_k ({\mathbf x}_v, B_v)$. 
\end{itemize}
We can define in an analogous way \emph{$Y$-patterns with coefficients of rank $n$ and $m$ frozen directions}, and the coefficient-free version of these concepts.
\end{definition}

Observe that given a cluster pattern (or a $Y$-pattern) of rank $n$ without frozen directions one can declare some of these directions to be \emph{frozen} to obtain a cluster pattern of lower rank with frozen directions. 
Similarly, given a cluster pattern (or a $Y$-pattern) with coefficients of rank $n$ and $m$ frozen directions we can decide to \emph{unfreeze} some directions to obtain a pattern of rank $>n$ and with fewer frozen directions.

\begin{definition}
A cluster pattern has \emph{geometric coefficients} if the underlying semifield $\PP$ is a tropical semifield.
\end{definition}

\begin{notation}
Let $B=(b_{ij})$ be a $(n + m)\times (n+m)$-square matrix. We introduce the following notation to denote 4 distinguished submatrices of $B$. 
\begin{itemize}
    \item The \emph{upper-left matrix} $B_{\text{ul}}:=(b_{i,j})_{1\leq i,j \leq n}$, an $n\times n $ subamtrix;
    \item The \emph{lower-left matrix} $B_{\text{ll}}:=(b_{n+i,j})_{1\leq i \leq m, \ 1 \leq j \leq n}$, an $m\times n $ subamtrix;
    \item The \emph{upper-right matrix} $B_{\text{ur}}:=(b_{i,n+j})_{1\leq i \leq n, \ 1 \leq j \leq m}$, an $n\times m $ subamtrix;
    \item The \emph{lower-right matrix} $B_{\text{lr}}:=(b_{n+i,n+j})_{1\leq i,j \leq m}$, an $m\times m $ subamtrix.
\end{itemize}
\end{notation}

\begin{definition}\thlabel{def:extended cluster}
Let $v\mapsto ({\bf x}_v,\bp_v,B_v)$ be a rank $n$ cluster pattern with geometric coefficients in  $\PP=\text{Trop}(p_1,\dots , p_m)$. For every $v\in \T^n$ we define
$$
{\bf x}^{\text{ext}}_v:=(x_{1;v},\dots , x_{n;v},p_1,\dots , p_m),
$$
and refer to it as \emph{the extended cluster}. Further, let $B^{\text{ext}}_v$ be the $(n+m)\times (n+m)$-matrix defined as follows
\begin{itemize}
    \item $(B^{\text{ext}}_v)_{\text{ul}}=B_v$;
    \item $(B^{\text{ext}}_v)_{\text{ll}}=(a_{ij}^v)$, where $p_{j;v}= \prod_{i=1}^m p_i^{a_{ij}^v}$ for every $j \in [1,n] $;
    \item $(B^{\text{ext}}_v)_{\text{ur}}=-(B^{\text{ext}}_v)_{\text{ll}}^T$, where $A^T$ denotes the transpose of a matrix $A$;
    \item $(B^{\text{ext}}_v)_{\text{lr}}$ is the zero $m\times m$-matrix.
\end{itemize}
\end{definition}

\begin{lemma}
\thlabel{lem:geometric_and_frozen}
The assignment $v\mapsto ({\bf x}^{\text{ext}}_v,B^{\text{ext}}_v)$ defines a (coefficient free!) cluster pattern of rank $n$ with $m$ frozen directions. 
\end{lemma}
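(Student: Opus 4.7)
The plan is to verify the two defining conditions of a coefficient-free cluster pattern with $m$ frozen directions: namely, for every edge $v \sim_k v'$ in $\T^n$ with $k \in [1,n]$, (a) the extended exchange matrix $B^{\text{ext}}_v$ transforms according to the matrix mutation rule \eqref{eq:matrix mut} in direction $k$, and (b) the extended cluster $\mathbf{x}^{\text{ext}}_v$ transforms according to the coefficient-free cluster mutation in direction $k$ with respect to $B^{\text{ext}}_v$.

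First I would check (a) block by block. The upper-left block coincides with $B_v$, and its mutation is exactly the mutation $B_v \mapsto B_{v'}$ inherited from the cluster pattern with coefficients. For the lower-left block $(a^v_{\ell j})_{\ell,j}$ encoding $p_{j;v} = \prod_\ell p_\ell^{a^v_{\ell j}}$, I would apply the $Y$-pattern mutation formula \eqref{eq:Y-pattern_mutation} to $\bp_v$ in the tropical semifield $\PP = \text{Trop}(p_1, \dots, p_m)$. Using the tropical identity $1 \oplus \prod_\ell p_\ell^{c_\ell} = \prod_\ell p_\ell^{-[c_\ell]_+}$, a direct computation yields
\[
a^{v'}_{\ell j} = a^v_{\ell j} + b^v_{kj}\lrb{\sgn(b^v_{kj})\, a^v_{\ell k}}_+.
\]
The elementary identity $\sgn(x)[xy]_+ = x[y\sgn(x)]_+ = y[x\sgn(y)]_+$, verified by the four sign cases for $(x,y)$, then shows this matches the matrix mutation formula applied to the lower-left entries. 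The upper-right block is by construction the negative transpose of the lower-left; since matrix mutation preserves skew-symmetrizability, its transformation is automatic from that of the lower-left.

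Next I would check (b). The last $m$ entries of $\mathbf{x}^{\text{ext}}_v$ are the frozen generators $p_1, \dots, p_m$, which remain unchanged under mutation in a mutable direction $k$. For the mutable entries, decomposing the product over the mutable and frozen parts of $B^{\text{ext}}_v$, the coefficient-free exchange relation reads
\[
x_{k;v'}\, x_{k;v} = \prod_{i=1}^n x_{i;v}^{[b^v_{ik}]_+} \prod_{\ell=1}^m p_\ell^{[a^v_{\ell k}]_+} + \prod_{i=1}^n x_{i;v}^{[-b^v_{ik}]_+} \prod_{\ell=1}^m p_\ell^{[-a^v_{\ell k}]_+}.
\]
In the tropical semifield one computes $p^+_{k;v} = \prod_\ell p_\ell^{[a^v_{\ell k}]_+}$ and $p^-_{k;v} = \prod_\ell p_\ell^{[-a^v_{\ell k}]_+}$ directly from \eqref{eq:p+-}, so this relation coincides exactly with the cluster mutation with coefficients in $\PP$, confirming $\mathbf{x}^{\text{ext}}_{v'} = \mu_k(\mathbf{x}^{\text{ext}}_v)$.

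The main subtlety will be the treatment of the lower-right block of $B^{\text{ext}}_v$: naively applied to the full $(n+m)\times(n+m)$ matrix, the matrix mutation formula introduces a correction $\sgn(a^v_{ik})\lrb{-a^v_{ik}a^v_{jk}}_+$, which can be nonzero when two entries in the column $(a^v_{\ell k})_\ell$ have opposite signs. The resolution is to adopt the standard convention for cluster patterns of geometric type, whereby the frozen-frozen block of the extended exchange matrix is not affected by mutation (equivalently, one only tracks the $(n+m)\times n$ submatrix consisting of mutable columns). With this convention in force, the verifications of the upper-left, lower-left, and upper-right blocks in (a), together with the matching of exchange relations in (b), establish that $v \mapsto (\mathbf{x}^{\text{ext}}_v, B^{\text{ext}}_v)$ satisfies all axioms of a coefficient-free cluster pattern of rank $n$ with $m$ frozen directions.
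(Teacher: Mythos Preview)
Your proposal is correct and considerably more detailed than the paper's own proof, which simply cites the discussion at the end of \cite[\S2]{FZ_clustersIV} without further argument. You instead give a direct block-by-block verification, which has the virtue of being self-contained and of exposing exactly where the tropical structure of $\PP$ enters.

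Two small remarks. First, the tropical identity you state has a sign slip: one has $1 \oplus \prod_\ell p_\ell^{c_\ell} = \prod_\ell p_\ell^{\min(0,c_\ell)} = \prod_\ell p_\ell^{-[-c_\ell]_+}$, not $\prod_\ell p_\ell^{-[c_\ell]_+}$. Your subsequent formula $a^{v'}_{\ell j} = a^v_{\ell j} + b^v_{kj}\lrb{\sgn(b^v_{kj})\, a^v_{\ell k}}_+$ is nonetheless the correct one, so this is only a typo in the intermediate step. Second, your observation about the lower-right block is well taken and worth making explicit: the paper's definition sets $(B^{\text{ext}}_v)_{\text{lr}} = 0$ for every $v$, which is not in general preserved by full $(n+m)\times(n+m)$ matrix mutation. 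The resolution you adopt --- tracking only the $(n+m)\times n$ rectangular matrix of mutable columns, as in the geometric-type convention of \cite{FZ_clustersIV} --- is the standard one and is implicit in the citation the paper gives.
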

\begin{proof}
This follows from the discussion at the end of \cite[\S2]{FZ_clustersIV}. 
\end{proof}

{\bf{Warning:}}{ \emph{We use \thref{lem:geometric_and_frozen} frequently to interpret cluster patterns with geometric coefficients as coefficient-free cluster patterns with frozen directions. 
As an important special case, a cluster pattern with principal coefficients of rank $n$ can be treated as a coefficient-free cluster pattern of rank $n$ with $n$ frozen direction.
However, the meaning in geometry of coefficients is very different form the meaning in geometry of frozen directions. 
We systematically think of coefficients as parameters deforming the mutation formulas. 
Frozen directions are just directions in which we do not perform mutations and they can be used to partially compactify cluster varieties.
We discuss these ideas extensively in \S\ref{sec:cluster_duality}.
}}

\begin{example}
Let $v\mapsto (\mathbf x_v,\mathbf p_v,B_v)$ be the cluster pattern with principal coefficients
at $v_0$ of type $A_2$. 
We follow the recipe of \thref{def:extended cluster} using the ingredients from Table~\ref{tab:A2}:
the matrices $B_v$ are in the second column and the monomials $p_{i;v}\in\text{Trop}(p_1,p_2)$ can be read from columns three and four specifying $\oplus$ as in \thref{exp:trop sf}. 

The matrices $B_{v_i}^{\text{ext}}$ for $i\in[0,4]$ are incidence matrices of the quivers shown in Figure~\ref{fig:A_2 prin coeff}. We enclose in a box ${\color {blue}\boxed{\phantom{:}}}$ the vertices corresponding to the rows of the matrices $ (B^{\text{ext}}_v)_{\text{ll}}$. Observe that they change according to the matrix mutation in \eqref{eq:matrix mut}, as \thref{lem:geometric_and_frozen} predicts.
\end{example}

\noindent
\begin{minipage}{\linewidth}
\captionsetup{type=figure}
\begin{center}
\begin{tikzpicture}

\draw[fill] (0,0) circle [radius=0.05];
\draw[fill] (1,0) circle [radius=0.05];
\draw[->] (0.25,0) -- (0.75,0);
\draw[->] (0,.75) -- (0,.25);
\draw[->] (1,.75) -- (1,.25);

\node[below] at (0,0) {\small 1};
\node[below] at (1,0) {\small 2};
\node[blue] at (0,1) {\tiny\boxed{3}};
\node[blue] at (1,1) {\tiny\boxed{4}};

\node at (.5,-1) {0};

\begin{scope}[xshift=3cm]
\draw[fill] (0,0) circle [radius=0.05];
\draw[fill] (1,0) circle [radius=0.05];
\draw[<-] (0.25,0) -- (0.75,0);
\draw[->] (0,.75) -- (0,.25);
\draw[<-] (1,.75) -- (1,.25);

\node[below] at (0,0) {\small 1};
\node[below] at (1,0) {\small 2};
\node[blue] at (0,1) {\tiny\boxed{3}};
\node[blue] at (1,1) {\tiny\boxed{4}};

\node at (.5,-1) {1};

\begin{scope}[xshift=3cm]
\draw[fill] (0,0) circle [radius=0.05];
\draw[fill] (1,0) circle [radius=0.05];
\draw[->] (0.25,0) -- (0.75,0);
\draw[<-] (0,.75) -- (0,.25);
\draw[<-] (1,.75) -- (1,.25);

\node[below] at (0,0) {\small 1};
\node[below] at (1,0) {\small 2};
\node[blue] at (0,1) {\tiny\boxed{3}};
\node[blue] at (1,1) {\tiny\boxed{4}};

\node at (.5,-1) {2};

\begin{scope}[xshift=3cm]
\draw[fill] (0,0) circle [radius=0.05];
\draw[fill] (1,0) circle [radius=0.05];
\draw[<-] (0.25,0) -- (0.75,0);
\draw[<-] (0,.75) -- (0,.25);
\draw[->] (1,.75) -- (1,.25);
\draw[->] (.25,.25) -- (.75,.75);

\node[below] at (0,0) {\small 1};
\node[below] at (1,0) {\small 2};
\node[blue] at (0,1) {\tiny\boxed{3}};
\node[blue] at (1,1) {\tiny\boxed{4}};

\node at (.5,-1) {3};

\begin{scope}[xshift=3cm]
\draw[fill] (0,0) circle [radius=0.05];
\draw[fill] (1,0) circle [radius=0.05];
\draw[->] (0.25,0) -- (0.75,0); % 1->2
\draw[->] (0,.75) -- (0,.25); % 1f -> 1
%\draw[->] (1,.75) -- (1,.25); 
\draw[<-] (.25,.25) -- (.75,.75); % 2f -> 1
\draw[->] (.75,.25) -- (.25,.75); % 2 -> 1f

\node[below] at (0,0) {\small 1};
\node[below] at (1,0) {\small 2};
\node[blue] at (0,1) {\tiny\boxed{3}};
\node[blue] at (1,1) {\tiny\boxed{4}};

\node at (.5,-1) {4};
\end{scope}
\end{scope}
\end{scope}
\end{scope}

\end{tikzpicture}
    
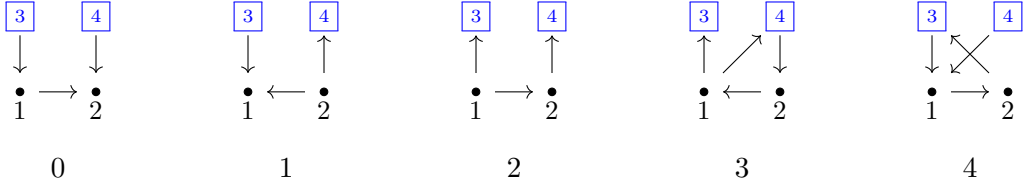
\captionof{figure}{\label{fig:A_2 prin coeff}
Quivers associated to a type $A_2$ cluster pattern with principal coefficients at $v_0$. The boxes refer to frozen vertices.    }
\end{center}
\end{minipage}

\begin{example}\thlabel{ex:GrassCoeff}
This example corresponds to the cluster structure on the coordinate ring of the affine cone of the Grassmannian $\Grass_2(\C^5)$ (for more details see \S\ref{ex:RW}). Consider the cluster pattern $v\mapsto (\mathbf x_v,\mathbf p_v,B_v)$ of type $A_2$ with geometric coefficients in $\text{Trop}(p_{12},p_{23},p_{34},p_{45},p_{15})$.
Let 
\[
\mathbf{x}_{v_0}=(x_1,x_2), \ \mathbf{p}_{v_0}=(p_{12}^{-1}p_{23}p_{34}^{-1},p_{34}p_{45}^{-1}p_{15}) \ \text{ and } \
B_{v_0}=\left(\begin{smallmatrix}0&-1\\ 1&0\end{smallmatrix}\right).
\]
The associated $7\times7$-matrix $B_{v_0}^{\text{ext}}$ is the incidence matrix of the quiver shown in
Figure~\ref{fig:quiver Gr(2,5)}, identifying $x_1=p_{13}$ and $x_2=p_{14}$. 
The corresponding extended cluster is
\[
\mathbf x_{v_0}^{\text{ext}}=(p_{13},p_{14},p_{12},p_{23},p_{34},p_{45},p_{15}).
\]
\end{example}

%%%%%%%%%%%%%%%%%%%%%%%%%%%%% Comparison of cluster and $Y$-patterns with coefficients
%%%%%%%%%%%%%

\subsection{Comparison of cluster and $Y$-patterns with coefficients}\label{sec:relation_cluster_patterns}

We explain how every cluster pattern with coefficients $v \mapsto (\mathbf{x}_v,\bp_v, B_v)$ gives rise to a $Y$-pattern with coefficients in a natural way. For each $v\in \T^n $ let $B_v=(b_{ij}^v)$. We define
\begin{eqnarray}\label{y-tilde}
\tilde{y}_{j;v}:=\displaystyle \prod_{i=1}^n x_{i;v}^{\ b^v_{ij} }
\end{eqnarray}
for each $v\in \T^n $ and $j\in [1,n]$. Moreover, let $\tilde{\by}_v=(\tilde{y}_{1;v},\dots ,\tilde{y}_{n;v} )$.

\begin{remark}
\thlabel{about_y-tilde}
Notice that \eqref{y-tilde} still makes sense when we have trivial coefficients, i.e. $p_{i;v}=1$ for all $i \in [1,n]$ and all $v\in \T^n$. Moreover, we can extend the definition of the $ \tilde{y}$'s for cluster patterns with coefficients and frozen directions.
\end{remark}

\begin{lemma}
\thlabel{comparison}
The assignment $v \mapsto (\tilde{\by}_v,\bp_v ,B_v) $ is a $Y$-pattern with coefficients.
\end{lemma}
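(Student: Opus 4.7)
The plan is to verify two things: first, that each triple $(\tilde{\by}_v, \bp_v, B_v)$ is indeed a labeled $Y$-seed with coefficients in the sense of \thref{def Y seed with coeff}; and second, that consecutive triples along an edge of $\T^n$ are related by the mutation formula \eqref{eq:genralformula}. The first point is essentially formal: $(\bp_v, B_v)$ is already a $Y$-seed in $\PP$ by the hypothesis that $v\mapsto ({\bf x}_v,\bp_v,B_v)$ is a cluster pattern with coefficients, and each $\tilde{y}_{j;v}$ is a Laurent monomial in the cluster $\mathbf{x}_v$, hence a subtraction-free rational expression lying in the universal semifield $\Q\PP_{\text{sf}}(x_1,\dots,x_n)$ generated by the initial cluster.

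The core of the argument is the mutation compatibility. Fix $v, v'\in\T^n$ joined by an edge labelled $k$. When $j=k$, the claim is immediate: using $b^v_{kk}=0$, the product defining $\tilde{y}_{k;v'}$ does not involve $x_{k;v'}$, and combining $b^{v'}_{ik} = -b^v_{ik}$ with $x_{i;v'}=x_{i;v}$ for $i\neq k$ yields $\tilde{y}_{k;v'} = \tilde{y}_{k;v}^{-1}$, matching \eqref{eq:genralformula}.

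For $j\neq k$, we apply the matrix mutation rule \eqref{eq:matrix mut} to write $b^{v'}_{ij} = b^v_{ij} + \sgn(b^v_{ik})[b^v_{ik}b^v_{kj}]_+$ for $i\neq k$ and $b^{v'}_{kj}=-b^v_{kj}$. Setting $M_\pm := \prod_{i:\pm b^v_{ik}>0} x_{i;v}^{\pm b^v_{ik}}$, so that $\tilde{y}_{k;v} = M_+/M_-$, a direct computation yields
\[
\frac{\tilde{y}_{j;v'}}{\tilde{y}_{j;v}} \;=\; (x_{k;v'}\, x_{k;v})^{-b^v_{kj}} \prod_{i\neq k} x_{i;v}^{\,\sgn(b^v_{ik})[b^v_{ik}b^v_{kj}]_+}.
\]
The $x$-mutation formula with coefficients provides $x_{k;v'}x_{k;v} = p_k^- M_- + p_k^+ M_+$, and the remaining monomial collapses: a case-analysis shows it equals $M_+^{b^v_{kj}}$ when $b^v_{kj}>0$, $M_-^{b^v_{kj}}$ when $b^v_{kj}<0$, and $1$ when $b^v_{kj}=0$. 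Pulling the appropriate factor out of $(p_k^- M_- + p_k^+ M_+)^{-b^v_{kj}}$ then produces
\[
\tilde{y}_{j;v'} \;=\; \tilde{y}_{j;v}\bigl(p_k^{\llbracket b^v_{kj}\rrbracket} + p_k^{\llbracket -b^v_{kj}\rrbracket}\, \tilde{y}_{k;v}^{-\sgn(b^v_{kj})}\bigr)^{-b^v_{kj}},
\]
which is exactly \eqref{eq:genralformula}.

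The main difficulty is purely bookkeeping: one must correctly pair $p_k^\pm$ with $M_\pm$ according to the sign of $b^v_{kj}$ and keep track of signs in the exponents of the monomial factor. The $\llbracket\cdot\rrbracket$ notation of \thref{p_+-} is designed precisely to absorb this case distinction into a single uniform formula, so once each case is verified separately the two merge into the required statement.
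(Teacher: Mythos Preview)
Your proof is correct and follows essentially the same approach as the paper's: a direct verification of the mutation formula \eqref{eq:genralformula} for $\tilde{y}_{j;v'}$ via case analysis on the sign of $b^v_{kj}$, using the exchange relation $x_{k;v'}x_{k;v} = p_k^- M_- + p_k^+ M_+$ and the matrix mutation rule. The only organizational difference is that you compute the ratio $\tilde{y}_{j;v'}/\tilde{y}_{j;v}$ first and then identify its pieces, whereas the paper starts from the right-hand side of \eqref{eq:genralformula} and manipulates it into $\tilde{y}_{j;v'}$; the underlying computation is identical.
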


\begin{proof}
For pairs of vertices $v,v' \in \T^n$ joined by an edge labeled by $k$ we have to show
\[
\tilde{y}_{j;v}\lrp{ p_{k;v}^{\llbracket b^v_{kj}\rrbracket} +  p_{k;v}^{\llbracket -b^v_{kj}\rrbracket}\tilde{y}_{k;v} } ^{-b^v_{kj}}=\tilde{y}_{j;v'}.
\]
To make notation lighter let $b_{ij}^v=b_{ij}$ and $ b_{ij}^{v'}=b'_{ij}$ for all $i,j \in[1,n] $. 
Since $b'_{ik}=-b_{ik}$ for all $i\in [1,n]$ the result follows for $j=k$. 
So we assume $j\neq k$. 
Notice that if $b_{kj}=0$ then $b'_{ij}=b_{ij}$ for all $i\in [1,n]$. 
In particular, $\tilde{y}_{j,v'}=\tilde{y}_{j,v}$, which is what we need to prove.
Therefore, we assume $b_{jk}\neq0$ and distinguish two cases.
\begin{itemize}
    \item[\bf Case 1:] If $b_{kj}<0$, we have the following
%%%%%%%%%%
\begin{eqnarray*}
 \tilde{y}_{j;v} \left( p_{k;v}^{\llbracket b_{kj}\rrbracket} \right. &+& \left. p_{k;v}^{\llbracket -b_{kj}\rrbracket}\tilde{y}_{k;v} \right) ^{-b_{kj}}
=
\displaystyle 
\prod_{i=1}^n x^{ b_{ij} }_{i;v}
%\cdot
\left(
p_{k;v}^- +  p_{k;v}^+%\cdot 
\displaystyle\prod_{i=1}^{n}
x_{i;v}^{ b_{ik} }
\right)^{-b_{kj}}\\
\displaystyle 
&=&\prod_{i=1}^n x^{ b_{ij} }_{i;v}
%\cdot
\left(
\left(
\displaystyle p_{k;v}^-%\cdot 
\prod_{i:b_{ik}<0} x^{ -b_{ik} }_{i;v}  +p_{k;v}^+%\cdot
\prod_{i:b_{ik}>0} x^{ b_{ik} }_{i;v} \right) %\cdot
\displaystyle \prod_{i:b_{ik}<0} x^{ b_{ik} }_{i;v}
\right)^{-b_{kj}}\\
\displaystyle 
&=&x^{ -b_{kj}}_{k;v'}
\prod_{i\neq k} x^{b_{ij} }_{i;v}
%\cdot
\prod_{i:b_{ik}<0} x^{-b_{ik}b_{kj}}_{i:v}= \prod_{i=1}^n x^{\ b'_{ij}}_{i;v'}=\tilde{y}_{j;v'}.
\end{eqnarray*}

\item[\bf Case 2:] If $b_{kj}>0$, the proof is done in exactly the same way.
\end{itemize}
\vspace{-.5cm}\end{proof}

\begin{remark}
\thlabel{y-hat-rem}
Given a cluster pattern with coefficients $v\mapsto (\mathbf x_v,\mathbf p_v,B_v)$ in \cite[Equation (3.7)]{FZ_clustersIV} the authors define $\hat{\by}_v=(\hat{y}_{1;v},\dots , \hat{y}_{n;v})$, where
\begin{equation}
\label{y-hat}
\hat{y}_{j;v}:=p_{j;v}\displaystyle \prod_{i=1}^n x_{i;v}^{\ b^v_{ij} }.    
\end{equation}
They prove in \cite[Proposition 3.9]{FZ_clustersIV} that $v\mapsto (\hat{\by}_v,B_v)$ is a $Y$-pattern. If the cluster pattern with coefficients $v\mapsto (\mathbf x_v,\mathbf p_v,B_v)$ has frozen directions then the $Y$-pattern $v\mapsto (\hat{\by}_v,B_v)$ has the same frozen directions.
Moreover, observe that \eqref{y-hat} is also well defined for cluster patterns with trivial coefficients.

If the cluster pattern $v\mapsto (\mathbf x_v,\mathbf p_v,B_v)$ has geometric coefficients we can consider the cluster pattern with frozen directions $v\mapsto ({\bf x}^{\text{ext}}_v,B^{\text{ext}}_v)$ afforded by \thref{lem:geometric_and_frozen}. 
Let $v\mapsto (\hat{\by}^{\text{ext}}_v,B^{\text{ext}}_v)$ be the $Y$-pattern with frozen directions given by \cite[Proposition 3.9]{FZ_clustersIV} applied to the cluster pattern with frozen directions ${v\mapsto ({\bf x}^{\text{ext}}_v,B^{\text{ext}}_v)}$.
Similarly, let $v\mapsto (\tilde{\by}^{\text{ext}}_v,B^{\text{ext}}_v)$ be the $Y$-pattern with frozen directions (and trivial coefficients) given by \thref{comparison} applied to $v\mapsto ({\bf x}^{\text{ext}}_v,B^{\text{ext}}_v)$. 
Then $\hat{\by}^{\text{ext}}_v=\tilde{\by}^{\text{ext}}_v$ for all $v$.
\end{remark}

\begin{remark}\thlabel{y-tilde and p-map}
In the context of cluster varieties the patterns defined by the $\hat{y}$'s and $\tilde{y}$'s are both very useful. The former will define a map $p:\cA \to \cX$ between cluster varieties with frozen directions (see \thref{def:p-map}); the latter a map $p:\cA_{\bp} \to \cX_{\bp}$ of cluster varieties with coefficients and frozen directions.

\end{remark}

\section{Cluster duality for cluster varieties with coefficients}\label{sec:cluster_duality}
In this section we give a geometric perspective of cluster and $Y$-patterns with coefficients. One of the most important ideas we want to communicate is the following:

\begin{center}  
    \emph{we make a distinction between frozen variables
and coefficients}.
\end{center}
A cluster variety with $n$ mutable directions and $m$ frozen directions is a scheme over $\C $ built by gluing complex tori isomorphic to $(\C^{\ast})^{n+m}$ via the usual mutation formulas.
The coefficients we introduce live in a ring $R$ of form $\C [p_1^{\pm 1},\dots, p_r^{\pm 1}]$ in case they are invertible, respectively of form $\C [p_1,\dots, p_r] $ if they are not invertible.
%They are meant to deform the mutation formulas. 
A cluster variety with (geometric) coefficients, with $n$ mutable directions and $m$ frozen directions is a scheme over $R$ obtained by gluing schemes of the form $\Spec (R[z^{\pm 1}_1,\dots, z^{\pm 1}_{n+m}])$. 
To glue these schemes we consider \emph{mutations with coefficients} which we think of as deformations of the usual mutation formulas. 
On the $\cA$-side, the notions of frozen variables and coefficients interact with each other as explained in \S\ref{geometric_coefficietns}: 
on the one hand the coefficients are organized in a $Y$-pattern; 
on the other, $\cA$-cluster variables associated to frozen directions give rise in a natural way to a $Y$-pattern in a tropical semifield. 
In this case the mutation formulas of the cluster pattern with frozen directions will be exactly the same as the mutation formulas of the cluster pattern with coefficients in the $Y$-pattern associated to the frozen $\cA$-variables.
However, the disparity between coefficients and frozen variables becomes more evident when we consider $\cX$-cluster varieties since $\cX$-cluster variables associated to frozen directions usually change after applying a mutation in a mutable direction. 
Moreover, it is crucial to make this distinction to discover the correct notion of duality for cluster varieties with coefficients. 
This notion of duality was predicted in \cite[\S7, Definition~7.15]{GHKK} and here we develop it in full detail.

In \cite{GHK_birational} and \cite{GHKK} $\cA$-cluster varieties with (geometric)  coefficients are considered as $\cA$-cluster varieties with frozen directions. Therefore, they are treated as schemes over $\C$. 
In \S\ref{sec:cluster_var_coeff} we develop a new perspective and treat them as schemes over $\Spec(R)$.
Further, we introduce the concept of $\cX $-cluster varieties with coefficients and cluster varieties with specified coefficients. 
The former are schemes over $\Spec(R)$
while the latter are schemes over $\C $. 
Moreover, in \S\ref{sec:cluster duality coeff} we show how the results of the previous section lead to the notion of cluster duality for cluster varieties with coefficients and with specified coefficients. 

\subsection{Reminders on cluster varieties}\label{sec:cluster_var}
Cluster patterns and $Y$-patterns define schemes over $\C$ by interpreting the mutation formulas as birational maps between algebraic tori (see \cite[p.149]{GHK_birational}). 
For this geometric perspective, it is convenient to fragment the notion of a seed (resp. $Y$-seed) into two parts, one changes under mutation while the other remains unchanged.

We introduce some new nomenclature. We believe this makes it easier to work with the algebraic and geometric settings simultaneously.

\begin{definition}
The (skew-symmetrizable) \emph{fixed data} $\Gamma$ consists of 

\begin{itemize}
    \item a finite set $I$ of \emph{directions} with a subset of \emph{unfrozen directions} $\Iuf$; 
    \item a lattice $N$ of rank $ |I|$;
    \item a saturated sublattice $ \Nuf \subseteq N$ of rank $|I_{\text{uf}}|$;
    \item a skew-symmetric bilinear form $\lbrace \cdot , \cdot \rbrace: N\times N \to \Q$;
    \item a sublattice $N^{\circ}\subseteq N$ of finite index satisfying
    \[\lbrace N_{\text{uf}}, N^{\circ}\rbrace \subset \Z  \ \text{ and }\  \lbrace N,N_{\text{uf}}\cap N^{\circ}\rbrace \subset \Z;
    \]
    \item a tuple of positive integers $(d_i:i\in I)$ with greatest common divisor 1;
    \item $M=\Hom(N,\Z)$ and $M^{\circ}=\Hom(N^{\circ},\Z)$.
\end{itemize}
We say that $\Gamma$ is \emph{skew-symmetric} if $d_i=1$ for all $i\in I$.
\end{definition}

\begin{definition}
An \emph{$N$-seed} for $\Gamma$ is a tuple $s=(e_i\in N :i\in I)$ such that $\{e_i:i\in I\}$ is a basis of $N$ and further
\begin{itemize}
\item $\{ e_i : i\in I_{\text{uf}}\} $ is a $\Z $-basis of $N_{\text{uf}}$;
\item $\{d_i e_i : i\in I\}$ is a $\Z$-basis of $N^{\circ}$;
\item the elements $v_i:=\{ e_i, \cdot \}\in M^{\circ}$ are non-zero for every $i\in \Iuf$.
\end{itemize}
Consider the dual basis $\{e^*_i\in M:i\in I\}$ of $M$. 
The \emph{associated $M^{\circ}$-seed} is the tuple $(f_i:i\in I)$, where $\{f_i:i\in I\}$ is a basis of $M^{\circ}$ given by
\[
f_i:=d_i^{-1}e_i^*\in M^{\circ}.
\]
\end{definition}
For a fixed $N$-seed $s$ we define the matrix $\boldsymbol{\varepsilon}_s:=(\epsilon_{ij})_{i,j\in I}$ by $\epsilon_{ij}:=\{ e_i,e_j\}d_j$.
For the pairing given by evaluation we use the notation
\[
\langle \cdot,\cdot \rangle:N\times M^{\circ}\to \Q.
\]

\begin{definition}
Given an $N$-seed $s$ and $k \in \Iuf $, the \emph{mutation in direction $k$ of} $s$ is the $N$-seed $\mu_k(s)=({e'}_i:i\in I)$ given by 
\begin{equation}
\label{e_mutation}
e_i':=\begin{cases} e_i+[\epsilon_{ik}]_+e_k & i\neq k,\\
-e_k&i=k.
\end{cases}
\end{equation}
The mutation in direction $k$ of the associated $M^{\circ}$-seed is $(f'_i:i\in I)$, where
\begin{equation}
\label{f_mutation}
f_i':=\begin{cases} -f_k+\sum_j [-\epsilon_{kj}]_+f_j& i=k,\\
f_i&i\not=k.
\end{cases} 
\end{equation}
\end{definition}

Unlike the mutations introduced in the preceding section, notice that mutation of $N$-seeds is not an involution, i.e. $\mu_k(\mu_k(s))\neq s $. This motivates the following definition.

\begin{definition}
The \emph{oriented tree} $\orT$ is the canonical orientation of $\T^n$ determined by the following conditions 
\begin{itemize}
    \item the $n$ edges incident to $v_0$ are oriented in outgoing direction from $v_0$;
    \item every vertex $v\neq v_0$ has one incoming edge and $n-1$ outgoing edges.
\end{itemize}
We write $v\overset{k}{\longrightarrow}v'\in \orT$ to indicate that the edge in between the vertices $v,v'$ of $\orT$ is oriented from $v$ to $v'$ and labeled by $k$.
\end{definition}

\begin{definition}
Let $n= |\Iuf|$. An \emph{N-pattern} is the assignment $ v \mapsto s_v$ of an $N$-seed $s_v$ to every $v\in \orT$, such that $s_{v'}=\mu_k(s_v)$ whenever $v\overset{k}{\longrightarrow}v'\in \orT$. We let $s_0:=s_{v_0}$ be the initial $N$-seed. 
\end{definition}

In what follows we identify a vertex $v\in \orT$ with the $N$-seed $s_v$ and write $s_v\in \orT$.

\begin{definition}
Let $\Gamma$ be fixed data, $s$ an $N$-seed and $k\in I_{\text{uf}} $. The \emph{$\cA$-cluster mutation} is the birational map on the torus $T_{N^{\circ}}:=\Spec\C[M^{\circ}]$
\[
\mu_{k;s}: T_{N^{\circ}} \dashrightarrow T_{N^{\circ}} 
\] 
specified by the pull-back formula at the level of characters of $T_{N^{\circ}}$ by
\begin{equation}
\label{eq:A_mut}
\mu_{k;s}^*(z^{m}):=z^{m} (1 +  z^{v_k})^{-\langle d_ke_k,m\rangle},    
\end{equation}
for every $z^m\in \C [M^{\circ}]=\C [T_{N^{\circ}}]$.
Similarly, the \emph{$\cX$-cluster mutation} is the birational map on the torus $T_M:=\Spec\C[N]$
\[
\mu_{k;s}: T_M  \dashrightarrow T_M 
\] 
specified by the pull-back formula
\begin{equation}
\label{eq:X_mut}
    \mu_{k;s}^*(z^{n}):=z^{n} (1 +  z^{e_k})^{- [n,e_k]_s},
\end{equation}
for every $z^n \in \C[N]=\C[T_M]$, and $[\cdot,\cdot]_s:N\times N\to \Q$ the bilinear form given by
\[
[e_i,e_j]_s=\epsilon_{ij}.%:=\{ e_i,e_j\}d_j.
\]
\end{definition}
\begin{remark}
To make notation lighter we denote the $\cA$- and $\cX$-mutation by the same symbol. 
The kind of tori that we are considering tells us which kind of mutation we have to consider. These mutation formulas are also called \emph{cluster transformations}.
\end{remark}

\begin{remark}
\thlabel{rem:notation}
We can pass from the geometric framework of this section to the algebraic framework of the last section as follows: 
\begin{itemize}
    \item set $ I=[1,n+m]$ and $I_{\text{uf}}=[1,n]$;
    \item the set of frozen directions is $\Ifr:=I\setminus \Iuf$ (see \S\ref{geometric_coefficietns});
    \item the skew-symmetrizable matrix $B=(b_{ij})_{(i,j)\in \Iuf \times \Iuf}$ is the transpose of the (sub-)matrix associated to $[\cdot,\cdot]_s$, i.e. for all $i,j \in \Iuf$:
    \[
    b_{ij}=\epsilon_{ji};
    \]
    \item to construct a coefficient-free $Y$-pattern with frozen directions we take as ambient semifield $\PP=\Q_{\text{sf}}(z^{e_1},\dots, z^{e_{n+m}})$. The initial $Y$-seed is $((z^{e_1},\dots,z^{e_{n+m}}),B)$;
    \item to construct a coefficient-free cluster pattern with frozen directions we take as ambient field $ \mathcal{F}=\Q(z^{f_1},\dots,z^{f_{n+m}}) $. The initial seed is $((z^{f_1},\dots , z^{f_{n+m}}),B)$. 
\end{itemize}
\end{remark}

\begin{definition}\thlabel{mu_maps}
Fix an $N$-pattern $v \mapsto s_v$. For every seed $s \in \orT$ let $T_{N^{\circ},s}$ be a torus isomorphic to $T_{N^{\circ}}$ endowed with the basis of characters $\{ z^{f_i}\in \C[M^{\circ}]:s=(e_i:i\in I) \}$. 
We define the birational map $\mu^{\phantom{-1}}_{s_0,s}:T_{N^{\circ},s_0}\dashrightarrow T_{N^{\circ},s}$ as
\[
\mu^{\phantom{-1}}_{s_0,s}:=\begin{cases} \text{Id} & \text{if } s=s_0, \\ \mu_{k_l;s_{l}}\circ \cdots \circ \mu_{k_0;s_0} & \text{if } s\neq s_0,
\end{cases}
\]
where $s_0 \overset{k_0}{\longrightarrow} s_1 \overset{k_1}{\longrightarrow} \cdots \overset{k_l}{\longrightarrow}  s_{l+1}=s$ is the unique oriented path from $s_0$ to $s$. For two seeds $s,s'\in \orT$ we define 
\begin{equation}
    \mu^{\phantom{-1}}_{s,s'}:=\mu^{\phantom{-1}}_{s_0,s'}\circ \mu_{s_0,s}^{-1}: T_{N^{\circ},s} \dashrightarrow T_{N^{\circ},s'}.
\end{equation}
By construction $\mu^{\phantom{-1}}_{s',s''}\circ \mu^{\phantom{-1}}_{s,s'}= \mu^{\phantom{-1}}_{s,s''}$. We define the analogous transformations for the tori $T_{M,s}$.
\end{definition}

\begin{lemma}
[{(\cite[Proposition 2.4]{GHK_birational})}]
\thlabel{gluing}
Let $\mathcal{I}$ be a set and $\{ S_i :i \in \mathcal{I}  \}$ be a collection of integral separated schemes of finite type over a locally Noetherian ring $R$, with birational maps $f_{ij}:S_i \dashrightarrow S_j$ for all $i,j$, verifying the cocycle condition $f_{jk}\circ f_{ij}=f_{ik}$ as rational maps and such that $f_{ii}=\text{Id}$. 
Let $U_{ij}\subset S_i$ be the largest open subscheme such that $f_{ij}:U_{ij}\to f_{ij}(U_{ij})$ is an isomorphism. 
Then there is a scheme 
\[
S=\bigcup\limits_{i \in \mathcal{I}}S_{i}
\]
obtained by gluing the $S_i$ along the open sets $U_{ij}$ via the maps $f_{ij}$.
\end{lemma}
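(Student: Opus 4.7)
The plan is to reduce the statement to the standard scheme-theoretic gluing lemma (e.g.\ \cite[Exercise~II.2.12]{Hartshorne} or the Stacks Project tag 01JA), whose hypotheses are: open covers $U_{ij} \subset S_i$, isomorphisms $\varphi_{ij}\colon U_{ij} \to U_{ji}$ with $\varphi_{ii} = \mathrm{Id}$, and the cocycle condition $\varphi_{jk} \circ \varphi_{ij} = \varphi_{ik}$ on $\varphi_{ij}^{-1}(U_{ji}\cap U_{jk})$. So everything boils down to producing the $U_{ij}$ and checking compatibility.

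First I would establish that a \emph{largest} open subscheme $U_{ij} \subset S_i$ on which $f_{ij}$ restricts to an isomorphism onto its image exists. The domain of definition $\mathrm{dom}(f_{ij})$ is open because $S_i$ is locally Noetherian and $f_{ij}$ is a rational map; on this open set, the subset where $f_{ij}$ is a local isomorphism (an isomorphism on stalks, or equivalently étale plus injective-on-tangent-spaces in the finite type setting) is open. Integrality and separatedness of $S_i, S_j$ over $R$ promote this local condition to a global isomorphism onto an open image: separatedness of $S_j$ ensures that the graph of $f_{ij}$ is closed in $\mathrm{dom}(f_{ij}) \times_R S_j$, so the locus where the projection to $S_i$ is an isomorphism is open. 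Defining $\varphi_{ij} := f_{ij}|_{U_{ij}}$, both $U_{ij}$ and $\varphi_{ij}(U_{ij})$ are open, and $\varphi_{ij}\colon U_{ij} \to \varphi_{ij}(U_{ij})$ is an isomorphism.

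Next I would verify the symmetry $\varphi_{ij}(U_{ij}) = U_{ji}$ and the cocycle condition. For symmetry, the cocycle $f_{ji}\circ f_{ij} = f_{ii} = \mathrm{Id}$ (taking $k=i$) forces $\varphi_{ij}(U_{ij}) \subseteq U_{ji}$; by symmetry in $i \leftrightarrow j$, $\varphi_{ji}(U_{ji}) \subseteq U_{ij}$, and applying $\varphi_{ij}$ yields equality. For the cocycle on triple overlaps, one works on the open set $W := U_{ij} \cap U_{ik} \cap \varphi_{ij}^{-1}(U_{jk}) \subset S_i$. On $W$ all three rational compositions are defined and agree with the corresponding $f$'s, so $f_{jk}\circ f_{ij} = f_{ik}$ as rational maps forces $\varphi_{jk}\circ \varphi_{ij} = \varphi_{ik}$ on $W$. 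The density of $W$ in the relevant opens then propagates this identity wherever needed, using the integrality and separatedness of the targets to conclude that two morphisms agreeing on a dense open must coincide on any scheme where both are defined.

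The step I expect to be the main technical obstacle is the careful verification that the open sets $U_{ij}$ fit into the standard gluing setup in a compatible way on triple overlaps; in particular, that $\varphi_{ij}$ identifies $U_{ij}\cap U_{ik}$ with $U_{ji}\cap U_{jk}$. This identification is not tautological from the definition of the $U_{ij}$ as maximal open domains of isomorphism, and requires exploiting the cocycle condition together with the maximality characterization of $U_{ij}, U_{ji}, U_{jk}$ and once more the separatedness hypothesis to promote rational equalities to equalities of opens. Once this is done, the standard gluing lemma delivers the scheme $S = \bigcup_{i\in\mathcal I} S_i$ together with open immersions $S_i \hookrightarrow S$ realising the $\varphi_{ij}$ as the transition maps, which is exactly the claim.
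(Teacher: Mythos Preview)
The paper does not give its own proof of this lemma: it is stated with a citation to \cite[Proposition~2.4]{GHK_birational}, and the subsequent remark explains that the proof there, which rests on \cite[6.5.4]{EGAI} and \cite[8.2.8]{EGAI}, goes through verbatim once $R$ is merely locally Noetherian rather than a field. So the paper's ``proof'' is a one-line reduction to EGA.

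Your sketch is essentially a reconstruction of that EGA-based argument, and the overall architecture---produce the maximal open domains of isomorphism $U_{ij}$, check symmetry and the cocycle condition, then invoke the standard gluing lemma---is correct. Two comments on where your sketch would need the cited EGA input to become a proof. First, the existence of a \emph{largest} open $U_{ij}$ on which $f_{ij}$ is an isomorphism onto its image is not quite as immediate as you suggest: one must show that the union of all such opens is again such an open, which amounts to showing injectivity on the union. This uses separatedness of $S_i$ (two local inverses of $f_{ij}$ defined on overlapping opens of $S_j$ agree on a dense open, hence everywhere) and is precisely the content handled by the EGA references on rational maps between locally Noetherian integral schemes. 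Second, the triple-overlap compatibility $\varphi_{ij}(U_{ij}\cap U_{ik}) = U_{ji}\cap U_{jk}$ that you flag as the main obstacle is indeed the delicate point; it is again dispatched by combining the maximality characterization of the $U_{ij}$ with the ``agree on a dense open $\Rightarrow$ agree everywhere'' principle for morphisms to a separated target. So your approach is sound and matches the underlying argument, but the paper simply outsources these verifications rather than carrying them out.
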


\begin{remark}\thlabel{gluing_general}
In \cite[Proposition 2.4]{GHK_birational} the authors prove \thref{gluing} when $R$ is a field because this is the generality they need. 
In the next section we will take $R$ to be a (Laurent) polynomial ring, which is Noetherian. 
The proof of \cite[Proposition 2.4]{GHK_birational} is based on \cite[Proposition 6.5.4]{EGAI} and \cite[Proposition 8.2.8]{EGAI}.
Both results hold in the more general case in which $R$ is a locally Noetherian ring. 
Therefore, the same proof can be used to prove \thref{gluing}. 
\end{remark}

\begin{definition}
Fix an $N$-pattern. We apply \thref{gluing} to the collection of tori ${\{  T_{N^{\circ},s}: s\in \orT \}}$ and the birational maps $\mu_{s,s'} $ to define the scheme
\[
\cA_{\Gamma,s_0}:=\bigcup\limits_{s\in \orT}T_{N^{\circ},s}.
\]
This is the \emph{$\cA $-cluster variety} associated to  $\Gamma$ and $s_0$. 
In a completely analogous way we define the \emph{$\cX$-cluster variety} associated to $\Gamma$ and $s_0$
\[
\cX_{\Gamma,s_0}:=\bigcup\limits_{s\in \orT}T_{M,s}.
\]
We say that a cluster variety (of any kind) has \emph{frozen directions} if $I\backslash\Iuf\neq \varnothing$.
\end{definition}

\begin{notation}
\thlabel{dictionary}
The characters of the torus $T_M$ (resp. $T_{N^\circ}$) are of the form $z^n$ (resp. $z^m$) for $n\in N$ (resp. $m\in M^{\circ}$). Fix an $N$-pattern, from now on we use the following identifications relating the notation of \S\ref{sec:Y-pattern} and \S\ref{sec:cluster_duality}:
\[
\begin{tabular}{l c l}
    $Y_{i;v}=X_{i;s_v}=z^{e_{i;s_v}}$ & and & $x_{i;v}=A_{i;s_v}=z^{f_{i;s_v}}$.
\end{tabular}
\]
However, notice that the identification $X_{i;s}=z^{e_{i;s}}$ (resp. $A_{i;s}=z^{f_{i;s}}$) is only valid on the torus $T_{M,s}$ (resp. $T_{N^{\circ},s}$). We refer to the symbols $X_{i;s}$ (resp. $A_{i;s}$) as $\cX$-cluster variables (resp. $\cA$-cluster variables).
\end{notation}

\begin{remark}
In this paper (coefficient-free) cluster varieties  are schemes over the complex numbers. It is possible to work over more general rings of definition such as $\Z$. 
This is the approach taken for example in \cite{FG_quantum}.  The varieties considered in this paper are simply the $\C$-points of such $ \Z$-schemes. 
\end{remark}

\begin{remark}\thlabel{rem:no s in mu}
We will write $\mu_k$ instead of $\mu_{k;s}$ since the tori we glue already tell us which $s$ we are considering.
\end{remark}

There is a fundamental map relating $\cA $- and $\cX$-cluster varieties. It is induced by the skew-symmetric bilinear form $\{\cdot,\cdot\}$ as follows. There are two natural maps
\hspace{-.5cm}

\begin{center}
\begin{tabular}{lcllclclll}
     $p_1^*:$ & $N_{\text{uf}}$ & $\to$ & $M^{\circ}$  &  and \ \  &  $p_2^*:$ & $N$ & $\to$ & $M^{\circ}/N_{\text{uf}}^{\perp}$ \\
      & $n$& $\mapsto$ & $(n_1\mapsto \{n,n_1\})$& & & $n$ & $\mapsto$ & $(n_2\mapsto\{n,n_2\})$ 
\end{tabular}
\end{center}
for $n_1\in N^{\circ}$, respectively $ n_2\in N_{\text{uf}}\cap N^\circ$.

\begin{definition}\thlabel{def:p-map}
A \emph{$p^*$-map} is a map $N\to M^\circ$, such that the diagram commutes:
\begin{center}
\begin{tikzcd}
    N_{\text{uf}} \arrow{r}{p_1^*} \arrow[hookrightarrow]{d} & M^\circ  \arrow[twoheadrightarrow]{d} \\
    N \arrow{r}{p_2^*}  \arrow{ru}{p^*} & M^\circ/N_{\text{uf}}^\perp 
\end{tikzcd}
\end{center}
Different choices of such a map differ by different choices of maps $N/N_{\text{uf}}\to N_{\text{uf}}^\perp$. 
A map ${p^*:N\to M^\circ}$ gives a map $p:T_{N^\circ,s}\to T_{M,s}$ for every seed $s$. We can see from \thref{comparison} and \thref{y-hat-rem} that $p$ commutes with the cluster transformations. Therefore, it further extends to a globally defined map $p:\cA_{\Gamma,s_0}\to \cX_{\Gamma,s_0}$.
\end{definition}

\begin{remark}\thlabel{dictionary p-map}
For simplicity assume we are in the skew-symmetric case and fix an $N$-seed $s=(e_i:i\in I)$ in an $N$-pattern with $s=s_v$. 
We can choose $p^*(e_i)=\{e_i,\cdot\}=e_i B_v^T=b_i$, the $i$th column of the matrix $\boldsymbol{\varepsilon}_s^T=B_v$ (see \thref{rem:notation}).
Comparing to \thref{y-tilde and p-map} and the $\hat{y}_{i;v}$ defined in \eqref{y-hat} this corresponds to
\[
p^*(X_{i;s})=p^*(z^{e_i})=z^{b_i}=\prod_{j\in I} (z^{f_j})^{b_{ji}}=\prod_{j\in I} A_{j;s}^{b_{ji}}=\hat{y}_{i;v}.
\]
\end{remark}

\subsection{Cluster varieties with coefficients}\label{sec:cluster_var_coeff}

In what follows we give a new perspective on $\cA$-cluster varieties with coefficients treating them as schemes over (Laurent) polynomial rings.
Further, we introduce the concept of $\cX $-cluster varieties with coefficients and cluster varieties with specified coefficients. 

We fix the tropical semifield $\PP=\text{Trop}(p_1,\dots, p_r)$. In particular, recall that  $\PP $ is a free abelian group generated by $p_1, \dots, p_r$ written multiplicatively. 

\begin{definition}
Let $\Gamma$ be fixed data and $s_{v_0}=(e_i:i\in I)$ an initial $N$-seed (in this situation we will usually call the pair $(\Gamma,s_{v_0})$ initial data). 
A $Y$-pattern $v \mapsto (\bp_v, B_v)$ of rank $|I_{\text{uf}}|$ is \emph{compatible with} $\Gamma $ and $s_{v_0}$ if the initial matrix $B_{v_0}=(b_{ij})$ is the transpose of the matrix associated to $[ \cdot , \cdot ]_{s_{v_0}} $ (see \thref{rem:notation}). 
\end{definition}

Let $R$ be either $\C[p_1,\dots, p_r]$ or $\C[p^{\pm 1}_1,\dots, p^{\pm 1}_r]$\footnote{In \S\ref{sec:tropical duality} (see Figure~\ref{fig:Delta F +}) we elaborate on the difference between invertible and non-invertible coefficients.}. 
In particular, $\Spec (R)$ is either an algebraic torus or an affine space. 
For a lattice $L$ we denote 
\[
T_{L}(R) :=\Spec(\C [L^*] \otimes_{\C} R) = T_{L} \times_{\C} \Spec(R).
\]
In particular, the coordinate ring of $T_{L}(R)$ is isomorphic to $R[L^{\ast}]$, where $L^*=\Hom(L,\Z)$.

Recall that for every $p\in \PP$, we have $p^+$ and $p^-$, which are monomials in $ p_1,\dots, p_r$, from \thref{p_+-}.

\begin{definition}
\thlabel{geometric_mut_w_coef}
Let $\Gamma$ be fixed data, $s$ an $N$-seed, $p\in \PP$ and $k\in I_{\text{uf}}$. The associated $\cA$-\emph{cluster mutation with coefficients} is the birational map
\[
\mu_{k;s;p}: T_{N^{\circ}}(R) \dashrightarrow T_{N^{\circ}}(R)
\] 
specified by the pull-back formula 
\begin{equation}
\label{eq:A_mut_w_coef}
\mu_{k;s;p}^*(z^{m}):=z^{m} (p^- +  p^+z^{v_k})^{-\langle d_ke_k,m\rangle},    
\end{equation}
for every $z^m \in R[M^\circ]$.
Similarly, the associated $\cX$-\emph{cluster mutation with coefficients} is the birational map
\[
\mu_{k;s;p}: T_M (R) \dashrightarrow T_M (R)
\] 
specified by the pull-back formula for every $z^n \in R[N]$:
\begin{equation}
\label{eq:X_mut_w_coef}
    \mu_{k;s;p}^*(z^{n}):=z^{n} (p^- +  p^+z^{e_k})^{- [n,e_k]_s}.
\end{equation}
\end{definition}

Using the above mutations with coefficients we construct cluster varieties with coefficients. Before defining them, first note that given fixed data $\Gamma $, an initial $N$-seed $s_0=(e_i : i \in I)$ and a compatible $Y$-pattern $v \mapsto (\bp_v, B_v)$, then for every seed $s\in\orT$ we obtain a birational map
$\mu_{s_0,s}: T_{N^\circ, s_0}(R) \dashrightarrow T_{N^\circ, s}(R) $ analogously to \thref{mu_maps}:
\[
\mu^{\phantom{-1}}_{s_0,s}:=\begin{cases} \text{Id} & \text{if } s=s_0, \\ \mu_{k_l;s_l;p_{k_l;v_l},}\circ \cdots \circ \mu_{k_0;s_0;p_{k_0;v_0}} & \text{if } s\neq s_0,
\end{cases}
\]
taking the unique oriented path from $s_0$ to $s$, $s_0 \overset{k_0}{\longrightarrow} s_1 \overset{k_1}{\longrightarrow} \cdots \overset{k_l}{\longrightarrow}  s_{l+1}=s$. For $s,s'\in\orT$ we define 
\begin{equation}
    \mu^{\phantom{-1}}_{s,s'}:=\mu^{\phantom{-1}}_{s_0,s'}\circ \mu_{s_0,s}^{-1}: T_{N^{\circ},s}(R) \dashrightarrow T_{N^{\circ},s'}(R).
\end{equation}
%This construction is applied analogously to define transformations for the tori $T_{M,s}(R)$.
%The cocycle condition in the case of the tori $T_{N^\circ, s}(R)$ (the $\cA$-side) follows from the fact that cluster patterns with geometric coefficients can be thought of as (coefficient-free) cluster patterns with frozen directions (see \S\ref{geometric_coefficietns}). This means that we can consider a cluster transformation with coefficients on the $\cA$-side as a cluster transformation of a coefficient-free $\cA$-cluster variety with frozen directions (see \thref{lem:geometric_and_frozen}) and we know the cocycle condition for this kind of cluster transformation. 
%In the case of the tori $T_{M,s}(R)$ (the $\cX$-side) it follows from \thref{periodicity}. 

We define the birational transformation $\mu_{s,s'}: T_{M,s}(R) \dashrightarrow T_{M,s'}(R)$ for $s,s'\in \orT$ in a completely analogous way.
For both kinds of tori the cocycle condition $  {\mu_{s',s''} \circ \mu_{s,s'} =\mu_{s,s''} }$ follows by definition.

\begin{definition}
\thlabel{cluster_varieties_w_coef}
Consider fixed data $\Gamma $, an initial $N$-seed $s_0=(e_i : i \in I)$ and a compatible $Y$-pattern $v \mapsto (\bp_v, B_v)$. We apply \thref{gluing} and \thref{gluing_general} to the tori $\{ T_{N^{\circ},s}(R):s \in \orT \}$ and the $\cA$-cluster mutation with coefficients $\mu_{s,s'} $ to define the $\cA$-\emph{cluster variety with coefficients} as the scheme 
\[
\cA_{\Gamma,s_0,\bp_{v_0}}:=\bigcup_{s\in \orT}T_{N^{\circ},s}(R).
\]
In a completely analogous way using the tori $\{ T_{M,s}(R): s \in \orT \}$ and the $\cX$-cluster mutation with coefficients we define the $\cX$-\emph{cluster variety with coefficients}
\[
\cX_{\Gamma,s_0,\bp_{v_0}}:=\bigcup_{s\in \orT}T_{M,s}(R).
\]
We say that a cluster variety with coefficients has \emph{frozen directions} if $I\setminus I_{\text{uf}}\not = \varnothing$. 
\end{definition}

\begin{remark}
Similarly to the coefficient-free case, we write $\mu_{k;p}$ instead of $\mu_{k;s;p}$.
\end{remark}

\begin{remark}\thlabel{p-map with coefficients}
Note that the notion of initial data for cluster varieties with coefficients coincides with the notion of initial data in the coefficient-free case. 
Moreover, the map $p^*:N\to M^{\circ}$ defined in \thref{def:p-map} relies only on the notion of initial data. 
Hence, we obtain as above for every seed $s$ a map
$p:T_{N^\circ,s}(R)\to T_{M,s}(R)$ via the pull-back on characters $z^{m}\in R[M^\circ]$ given by $p^*$. 
In view of \thref{comparison} this extends to a map
\[
p:\cA_{\Gamma,s_0,\mathbf p_{v_0}}\to \cX_{\Gamma,s_0,\mathbf p_{v_0}}.
\]
\end{remark}

\begin{remark}
\thlabel{dictionary_2}
\thref{rem:notation} extends to cluster and $Y$-patterns with coefficients in $\PP$:
\begin{itemize}
    \item the associated $Y$-pattern with coefficients and frozen directions lives in
    \[
    \Q\PP_{\text{sf}}(z^{e_1},\dots,z^{e_{n+m}})=\Q(p_1,\dots,p_r)_{\text{sf}}(z^{e_1},\dots,z^{e_{n+m}});
    \]
    \item the associated cluster pattern with coefficients and frozen directions lives in
    \[
    \Q\PP(z^{f_1},\dots,z^{f_{n+m}})=\Q(p_1,\dots,p_r,z^{f_1},\dots,z^{f_{n+m}}).
    \]
\end{itemize}
\end{remark}

In account of \thref{rem:notation}, \thref{dictionary} and \thref{dictionary_2} we obtain the following:

\begin{lemma}
\thlabel{equiv_formulas}
The $\cA$-cluster mutation with coefficients corresponds to cluster mutation with geometric coefficients, and the $\cX$-cluster mutation with coefficients corresponds to $Y$-mutation with geometric coefficients.
\end{lemma}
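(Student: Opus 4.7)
The plan is a direct verification: translate the geometric mutation formulas \eqref{eq:A_mut_w_coef} and \eqref{eq:X_mut_w_coef} into algebraic language using the dictionaries in \thref{rem:notation}, \thref{dictionary}, and \thref{dictionary_2}, and match them against the algebraic formulas in Definitions~2.10 and the $Y$-mutation formula \eqref{eq:genralformula}. The coefficient $p\in\PP$ appearing in a geometric mutation at seed $s=s_v$ in direction $k$ is identified with $p_{k;v}\in\PP$ in the compatible $Y$-pattern of coefficients.

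The key preliminary observation on the $\cA$-side is that under the identification $A_{j;s}=z^{f_j}$ one has
\[
v_k=\{e_k,\cdot\}=\sum_{i\in I}\epsilon_{ki}f_i=\sum_{i\in I}b_{ik}f_i,\qquad \text{so}\qquad z^{v_k}=\prod_i A_{i;s}^{\,b_{ik}}.
\]
For $j\neq k$ the mutation rule \eqref{f_mutation} gives $f'_j=f_j$ and $\langle d_ke_k,f_j\rangle=0$, so $\mu^*_{k;s;p}(z^{f'_j})=z^{f_j}$, matching $x'_j=x_j$. For $j=k$, using $f'_k=-f_k+\sum_j[-\epsilon_{kj}]_+f_j$ and $\langle d_ke_k,f'_k\rangle=-1$,
\[
\mu^*_{k;s;p}(z^{f'_k})=z^{-f_k+\sum_j[-\epsilon_{kj}]_+ f_j}\bigl(p^-+p^+z^{v_k}\bigr).
\]
Substituting $z^{v_k}=\prod_i A_i^{b_{ik}}$, clearing the common monomial factor $\prod_{i:b_{ik}<0} A_i^{-b_{ik}}$ against $\prod_j A_j^{[-\epsilon_{kj}]_+}=\prod_{j:b_{jk}<0}A_j^{-b_{jk}}$ (noting $b_{jk}=\epsilon_{kj}$), and dividing by $A_k$, one obtains
\[
A'_k=\frac{p^-\prod_{i:b_{ik}<0} A_{i;s}^{-b_{ik}}+p^+\prod_{i:b_{ik}>0} A_{i;s}^{b_{ik}}}{A_{k;s}},
\]
which is precisely Definition~2.10 with $p=p_k$.

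For the $\cX$-side, apply $\mu^*_{k;s;p}$ to $z^{e'_j}$ using the mutation rule \eqref{e_mutation}. When $j=k$, $e'_k=-e_k$ and $[-e_k,e_k]_s=0$, so $X'_k=z^{-e_k}=X_k^{-1}$, matching $y'_k=y_k^{-1}$. When $j\neq k$, $e'_j=e_j+[\epsilon_{jk}]_+e_k$, and $[e_j+[\epsilon_{jk}]_+e_k,e_k]_s=\epsilon_{jk}$, giving
\[
X'_j=z^{e_j}(z^{e_k})^{[\epsilon_{jk}]_+}\bigl(p^-+p^+z^{e_k}\bigr)^{-\epsilon_{jk}}.
\]
Using $b_{kj}=\epsilon_{jk}$, $y_j=z^{e_j}$, and $y_k=z^{e_k}$, one splits into three cases. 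If $b_{kj}>0$, absorbing $(z^{e_k})^{b_{kj}}$ inside the binomial gives $y_j(p^++p^-y_k^{-1})^{-b_{kj}}$. If $b_{kj}<0$, the monomial factor vanishes and one gets $y_j(p^-+p^+y_k)^{-b_{kj}}$. If $b_{kj}=0$, both sides yield $y_j$. Since $p^{\llbracket b_{kj}\rrbracket}=p^+$ and $p^{\llbracket -b_{kj}\rrbracket}=p^-$ in the first case (and reversed in the second), each case matches \eqref{eq:genralformula} with $p=p_k$ exactly.

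The main obstacle is nothing more than careful bookkeeping of signs: the convention $b_{ij}=\epsilon_{ji}$ swaps indices in several places, and the $\cX$-side case split requires correctly identifying $p^{\llbracket\pm b_{kj}\rrbracket}$ with $p^\pm$ according to the sign of $b_{kj}$. No deeper input beyond the identification $z^{v_k}=\prod_i A_i^{b_{ik}}$ and the defining formula \eqref{eq:p+-} for $p^\pm$ is required.
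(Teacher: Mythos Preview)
Your proof is correct and follows essentially the same approach as the paper: a direct verification by applying the geometric mutation formula to the new basis vectors and splitting into cases according to the sign of $\epsilon_{jk}=b_{kj}$. The paper only writes out the $\cX$-side computation and declares the $\cA$-side analogous, whereas you carry out both sides explicitly; your treatment is more detailed but not materially different.
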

\begin{proof}
We only prove the statement on the $\cX$-side, the proof on the $\cA$-side is analogous. We have to check that formula $ \eqref{eq:X_mut_w_coef}$ is equivalent to formula \eqref{eq:genralformula} for ${\PP=\text{Trop}(p_1,\dots,p_r)}$. This follows at once from the following computation

\begin{eqnarray*}
  \mu_{k;p}^*(z^{e'_k})&=& z^{e'_k}(p^{-}+p^{+}z^{e_k})^{-\epsilon_{ik}}=\begin{cases}
  z^{e_k}(p^{-}z^{-e_k}+p^+)^{-\epsilon_{ik}} & \text{if } \epsilon_{ik}>0, \\
  z^{e_k}(p^{-}+p^{+}z^{e_k})^{-\epsilon_{ik}}& \text{if } \epsilon_{ik}\leq 0.
\end{cases}
\end{eqnarray*}
%\vspace{-.5cm}
\end{proof}

\begin{remark}
\thlabel{coefficients_vs_frozen_variables}
We can view cluster varieties with coefficients as schemes over $\C$ via the canonical inclusion $ \C \hookrightarrow R$. 
On the $\cA$-side frozen variables can be thought of as coefficients as explained in \S\ref{geometric_coefficietns}. 
Therefore, an $\cA$-cluster variety with coefficients considered as a scheme over $\C$ has the structure of a coefficient-free $\cA$-cluster variety with frozen directions. 
We just have to construct an appropriate $\Gamma$ following \thref{def:extended cluster}. 
However, this is not the case on the $\cX$-side because $\cX$-variables associated to frozen directions may change after mutation in a non-frozen direction. 
\end{remark}

Let $\Gamma $ be fixed data, $s_0$ an initial $N$-seed and $v \mapsto (\bp_v, B_v)$ a compatible $Y$-pattern.
Let $L$ be either the lattice $N^{\circ}$ or $M$. The canonical inclusion
\[
T_{L}\hookrightarrow T_{L}(R)=T_{L}\times_{\C}\Spec(R)
\]
induces a left action of the group scheme $T_{L}$ on $T_L(R)$. 
Explicitly, this action is given by the canonical projection $T_{L}\times_{\C} \Spec(R)\to \Spec (R)$.
Moreover, if $s, s'\in \orT$ 
we have a commutative diagram 
\begin{equation*}
   \xymatrix{
    T_{L,s}(R)\ar@{-->}^{\mu_{s,s'}}[rr]\ar[dr] &  & T_{L,s'}(R) \ar[dl] \\
    &  \Spec(R)   &
    }.
\end{equation*}
This implies that we can glue these morphisms to obtain morphisms of schemes 
\begin{equation}
   \xymatrix{
    \cA_{\Gamma,s_0,\bp_{v_0}}\ar_{\pi_{\cA}}[dr] &  & \cX_{\Gamma,s_0,\bp_{v_0}} \ar^{\pi_{\cX}}[dl] \\
    &  \Spec(R)   &
    }.
\end{equation}
Notice that these morphism are flat. 
Indeed, as flatness is a local property to verify it we can restrict to $T_{L,s}(R)$.
The induced map of rings $R \to R[L^*]$ is flat, so both $\pi_{\cA}$ and $\pi_{\cX}$ are flat morphisms of schemes.

\begin{definition}
\thlabel{def:cv_w_special_coeffs}
Let $\lambda\in \Spec(R)$ be a fixed parameter. The \emph{$\cA$-cluster variety with specified coefficients} $\cA_{\Gamma,s_0,\bp_{v_0},\lambda}$ is the fiber $\pi^{-1}_{\cA}(\lambda)$. 
We define the \emph{$\cX$-cluster variety with specified coefficients} $\cX_{\Gamma, s_0,\bp_{v_0}, \lambda}$ as the fiber $\pi_{\cX}^{-1}(\lambda)$. 
\end{definition}

Notice that if $\lambda\in \Spec(R)$ is a closed point then $\cA_{\Gamma,s_0,\bp_{v_0},\lambda}$ is a scheme over $\C $ in a canonical way. 
In particular, if $\vb{1}=(1,\dots,1)$, then
\[
\cA_{\Gamma,s_0,\bp_{v_0},\vb{1}}=\cA_{\Gamma,s_0}\ \ \ \text{and} \ \ \  \cX_{\Gamma,s_0,\bp_{v_0},\vb{1}}=\cX_{\Gamma,s_0}.
\]
More generally, let $\mathcal{M} =\{\prod_{i=1}^r p_i^{a_i}\in \PP:a_i\geq 0 \}$ and $p\in \PP$. Fix a morphism of monoids $\phi: \mathcal{M} \to \C$. The \emph{mutation with specified coefficients associated to $\mu_{k;p}$ and $\phi$} is the birational map
\[
\mu_{k;\phi(p)}:T_{N^{\circ}}\dashrightarrow T_{N^{\circ}}
\]
given by the pull-back formula 
\[
\mu_{k;\phi(p)}^*(z^m):= z^m(\phi(p^-)+\phi(p^+)z^{v_k})^{-\langle d_ke_k,m\rangle},
\]
for every $z^m\in R[M^{\circ}]$. 
We define in an analogous way the $\cX$-mutation with specified coefficients. 
If $\lambda=(\lambda_1,\dots,\lambda_r)$ is a closed point of $\Spec(R)$ and $\phi:\mathcal{M}\to \C$ is defined by $\phi(p_i)=\lambda_i$ then 
\[
\cA_{\Gamma, \bp, \lambda}= \bigcup_{s\in \orT} T_{N^{\circ},s},
\]
where the tori $T_{N^{\circ},s}$ are glued by $\mu_{k;\phi(p)}$. Similarly, we have
\[
\cX_{\Gamma, \bp, \lambda}= \bigcup_{s\in \orT} T_{M,s},
\]
where the tori $T_{M,s}$ are glued with the mutations with specified coefficients.

\begin{remark}\thlabel{rem:dual_specified_coeff}
In \cite{GHK_birational} the authors introduce the cluster varieties $\cA_{\mathbf t}$ for $\mathbf t\in \C^r$ and call them \emph{cluster varieties with general coefficients}. 
These varieties are obtained specifying   \thref{def:cv_w_special_coeffs} to the case of principal coefficients and identifying the parameters $p_1,\dots , p_r$ with the complex values  $\phi(p_1)=t_1,\dots,\phi(p_r)=t_r$. 
In \cite[\S7, p.~555]{GHKK}, Gross, Hacking, Keel and Kontsevich mention that 
it is not clear how to dualize the birational gluing maps for $\cA$-varieties with specified coefficients as it is not obvious how the parameters should be treated.
The formalism developed in this section is meant to solve this problem.
We would like to mention that $\cA_{\mathbf t}$ and $\cA_{\mathbf t'}$ are isomorphic if $\mathbf t,\mathbf t'\in \Spec(R)$ are generic closed points. 
We prove the analogous result for the $\cX$-side in \thref{iso of fibres} below. 
\end{remark}

\subsection{Cluster duality with coefficients}\label{sec:cluster duality coeff}

As explained in \cite{GHK_birational}, the $\cA$- and $\cX$-mutation formulas are canonically dual to each other. 
In this subsection we recall this duality and extend it to the context of cluster varieties with coefficients and specified coefficients.

\begin{definition}
The \emph{Langlands dual of $\Gamma$} is the fixed data $\Gamma^{\vee}$ given by
\begin{itemize}
    \item $I^{\vee}=I$ and $\Iuf^{\vee}=\Iuf$; 
    \item $N^{\vee}=N^{\circ}$;
    \item $ (\Nuf)^{\vee} =\Nuf \cap N^{\circ}$ ;
    \item $\lbrace \cdot , \cdot \rbrace^{\vee}=d^{-1}\lbrace \cdot , \cdot \rbrace $, where $d:=\text{lcm}(d_i:i\in I)$;
    \item $(N^{\vee})^{\circ}=dN$;
    \item $d^{\vee}_i=dd_i^{-1}$ for all $i\in I$;
    \item $M^{\vee}=M^{\circ}$ and $(M^{\vee})^{\circ}=d^{-1}M$.
\end{itemize}
The \emph{Langlands dual of $s$} is $s^{\vee}:=(d_ie_i:i\in I)$. The \emph{Fock--Goncharov dual cluster varieties of $\cA_{\Gamma,s}$ and $\cX_{\Gamma,s}$} are
\[
\cA^{\vee}_{\Gamma,s}:=\cX_{\Gamma^{\vee},s^{\vee}}\ \ \ \text{and} \ \ \ \cX^{\vee}_{\Gamma,s}:=\cA_{\Gamma^{\vee},s^{\vee}}.
\]
\end{definition}

\begin{remark}
Notice that if $B$ is the initial matrix associated to $ \Gamma$ and $s$ then $-B^{T}$ is the initial matrix associated to $\Gamma^{\vee}$ and $s^{\vee}$. 
In particular, in the skew-symmetric case we have $\Gamma^{\vee}=\Gamma$ and $s^{\vee}=s$. 
Moreover, $\Gamma$ (resp. $s$) is equal to $(\Gamma^{\vee})^{\vee}$ (resp. $(s^{\vee})^{\vee}$) up to the morphism of lattices $dN \to N$ given by $n\mapsto d^{-1}n$.
\end{remark}

\begin{definition}
\thlabel{dual_mutation}
Every pair $(n,m)\in N^{\circ} \times M^{\circ}$ with $\langle n,m \rangle =0$ defines a mutation\footnote{In this context, a mutation is a birational map $\mu: T \dashrightarrow T$ of algebraic tori preserving the canonical volume form of $T$.} 
\[
\mu^{\phantom{x}}_{(n,m)}:T_{N^{\circ}} \dashrightarrow T_{N^{\circ}}
\]
specified by the formula
\begin{equation*}
\mu_{(n,m)}^*(z^{m'}):=z^{m'} (1+z^m)^{\langle m',n\rangle},
\end{equation*}
for every $z^{m'}\in \C [M^{\circ}]$. The \emph{dual mutation} is defined as the birational map
\[
\mu^{\phantom{x}}_{(m,-n)}:T_{M^{\vee}} \dashrightarrow T_{M^{\vee}}
\]
associated to the pair $(m,-n)\in \Hom(N^{\circ}\times M^{\circ}, \Z)=M^{\circ} \times N^{\circ} =M^{\vee}\times N^{\vee}$. 
\end{definition}

The $ \cA$-cluster mutation \eqref{eq:A_mut} associated to $\Gamma$ and $s$ is $\mu_{(-d_ke_k,v_k)}$, and the $ \cX$-cluster mutation \eqref{eq:X_mut} associated to $\Gamma^{\vee}$ and $s^{\vee}$ is $\mu_{(v_k,d_ke_k)}$. 
This explains the duality between $\cA$- and $\cX$-cluster varieties.

\begin{definition}
Let $p\in \text{Trop}(p_1,\dots, p_r)$ and $(n,m)\in N^{\circ} \times M^{\circ}$. The \emph{mutation associated to} $(n,m)$ \emph{with deformation parameter} $p$ is the birational map
 \[
\mu^{\phantom{x}}_{(n,m);p}:T_{N^{\circ}}(R) \dashrightarrow T_{N^{\circ}}(R)
\]
defined at the level of characters for every $z^{m'}\in R[M^{\circ}]$ by
\begin{equation}
\label{general_mutation_w_coeficients}
\mu_{(n,m);p}^*(z^{m'}):=z^{m'} (p^- + p^+z^m)^{\langle m',n\rangle}.
\end{equation}
Let $\phi:\mathcal{M}\to \C$ be a map of monoids. The \emph{mutation associated to $(n,m)$, $\phi$ and $p$} is 
\begin{equation}
\label{general_mutation_w_s_coeficients}
\mu_{(n,m);\phi(p)}^*(z^{m'}):=z^{m'} (\phi(p^-) + \phi(p^+)z^m)^{\langle m',n\rangle}. 
\end{equation}
\end{definition}

We define a notion of duality for \eqref{general_mutation_w_coeficients} and \eqref{general_mutation_w_s_coeficients} that generalizes \thref{dual_mutation}. Moreover, this notion of duality specializes to the duality between $\cA_{\Gamma,s}$ and $\cX_{\Gamma^{\vee},s^{\vee}}$ when we take trivial coefficients, i.e. $R=\C$. 

\begin{definition}
Let $v\mapsto (\bp_v,B_v)$ be an initial $Y$-pattern. The $Y$-pattern ${v \mapsto (\bp_v^\vee,B_v^{\vee})}$ is defined by setting 
\[
\bp_{v_0}^\vee=\bp_{v_0} \ \text{and} \ B^{\vee}_{v_0}=-B_{v_0}^{T}.
\]
Let $\bp_{v}=(p_{1;v},\dots , p_{n;v})$ be a $Y$-seed of the initial $Y$-pattern. The \emph{mutation dual to} $ \mu_{(n,m);p_{k;v}}$ is 
\begin{equation*}
    \mu_{(m,-n);p^{\vee}_{k;v}}: T_{M^{\vee}}(R)\dashrightarrow T_{M^{\vee}}(R).
\end{equation*}
Explicitly, at the level of characters $\mu_{(m,-n);p^{\vee}_{k;v}}$ is given by
\begin{equation*}
z^{n'}\mapsto z^{n'} ((p^{\vee}_{k;v})^- + (p^{\vee}_{k;v})^+z^{-n})^{\langle n',m\rangle}
\end{equation*}
for every $z^{n'}\in R[N^{\vee}]$.
Similarly, the \emph{mutation dual to} $ \mu_{(n,m);\phi(p_{k;v})}$ is 
\begin{equation*}
    \mu_{(m,-n);\phi(p^{\vee}_{k;v})}: T_{M^{\vee}}\dashrightarrow T_{M^{\vee}}.
\end{equation*}
\end{definition}

Observe that if the $Y$-pattern $v\mapsto (\bp_v,B_v)$ is compatible with $ \Gamma $ and $s$ then the $Y$-pattern $v \mapsto (\bp_v^\vee,B_v^{\vee})$ is compatible with $\Gamma^{\vee} $ and $s^\vee$.

\begin{definition}
The \emph{duals of} $\cA_{\Gamma,s_0,\bp_{v_0}}$ and $\cX_{\Gamma,s_0,\bp_{v_0}}$ are
\[
\cA^{\vee}_{\Gamma,s_0,\bp_{v_0}}:=\cX_{\Gamma^\vee,s_0^\vee,\bp_{v_0}^\vee}\ \ \ \text{and} \ \ \ \cX^{\vee}_{\Gamma,s_0,\bp_{v_0}}:=\cA_{\Gamma^{\vee},s^{\vee}_0,\bp^{\vee}_{v_0}}.
\]
\end{definition}

\subsection{Cluster varieties with principal coefficients}\label{sec:cluster_var_prin_coeff}
We specify the constructions given in \S\ref{sec:cluster_var_coeff} and \S\ref{sec:cluster duality coeff} to the case of principal coefficients.

\begin{definition}\thlabel{def:Gamma_prin}
The fixed data with principal coefficients $\Gamma_{\text{prin}}$ associated to given fixed data $\Gamma$ is specified by
\begin{itemize}
    \item the indexing set $\widetilde{I}$ is a disjoint union of two copies of $I$;
    \item the set of unfrozen indices $\widetilde{I}_{\text{uf}}$ is the subset $\Iuf$ of the first copy of $I$;
    \item $\widetilde{N}:=N\oplus M^{\circ}$;
    \item $\widetilde{N}_{\text{uf}}:=\Nuf\oplus 0$;
    \item the skew-symmetric bilinear form $\lbrace \cdot , \cdot \rbrace:\widetilde{N}\times \widetilde{N}\to \Q$ is given by \[
    \{ (n_1,m_1),(n_2,m_2) \}:= \{ n_1,n_2\}+\langle n_1,m_2\rangle - \langle n_2,m_1\rangle;
    \]
    \item $\widetilde{N}^{\circ}:=N^{\circ}\times M$;
    \item the same $d_i$'s as for $\Gamma$.
\end{itemize}
Moreover, the $N$-seed $s$ defines the $\widetilde{N}$-seed $\tilde{s}=((e_1,0),   \dots, (e_n,0),(0,f_1),\dots (0,f_n))$.
\end{definition}

\begin{definition}
The $\cA$-cluster variety with \emph{principal coefficients} associated to $\Gamma$ and $s$ is the $\cA$-cluster variety with coefficients
\[
\cAp :=\cA_{\Gamma_{\text{prin}},\tilde{s}}.
\]
\end{definition}

\begin{remark}
In light of \thref{coefficients_vs_frozen_variables}, $\cAp$ can be viewed as a coefficient-free $\cA$-cluster variety with frozen directions. In \cite{GHKK} the authors define the cluster variety $\cX_{\text{prin}}$ as the (coefficient-free) $ \cX$-cluster variety associated to $(\Gamma^{\vee})_{\text{prin}}$ and $\widetilde{(s^\vee)}$. From the framework developed above we can see that

\begin{center}
    \emph{$\cXp$ is cluster dual to $\cAp$ as a scheme over $\C $, i.e.\\
    as a coefficient-free cluster variety with frozen directions.}
\end{center}
The dual of $\cAp $ as a cluster variety with coefficients, denoted by $\Xfam$, is a very interesting object and the main object of study throughout the rest of the paper.
\end{remark}

We conclude this section with definitions and notation that will be useful in the remainder of the paper.  
So far our notation for cluster varieties includes the subscript $\Gamma$. From now on we drop this subscript and denote $\cA_{\Gamma,s_0}$ simply by $\cA$, resp. $\cA_{\Gamma,s_0,\bp_{v_0}}$ by $\cA_{\bp_{v_0}}$.  We use analogous notation for $\cX$-cluster varieties. 

\begin{definition}
A rational function on a cluster variety (of whichever type) is called a {\it{cluster monomial}} if it is a Laurent monomial in the variables of some seed.
\end{definition}

\begin{notation}
\thlabel{tilde_extension}
Let $A_{i;s}$ be a cluster variable of $\cA$. By the Laurent phenomenon (\thref{Laurent_phenomenon}) the $A_{i;s}$ are global regular functions on $\cA$. We denote by $\widetilde{A}_{i;s}$ the extension of $A_{i;s}$ to $\cAps{s_0}$. These extensions are again globally regular. Explicitly, in the language of the preceding section we have 
\[
X^{\text{FZ}}_{i;v}=\widetilde{A}_{i;s_v}.
\]
We extend this notation to cluster monomials. 
Similarly, if $X_{i;s}$ is an $\cX$-cluster variable associated to $\cX$, we let $\widetilde{X}_{i;s}$ denote the extension of $X_{i;s}$ to $\Xfam_{s_0} $. 
Like the $\cX$-variables themselves, these extensions are only locally defined.
\end{notation}

We often want to express functions in terms of the variables of the initial seed $s_0$.
We introduce the following notation for this purpose.

\begin{notation}
\thlabel{in_seed_pullback}
Let $\mathcal{V}$ be a cluster variety of whichever type, with or without frozen directions and coefficients.
Let $f$ be a function on any torus in the atlas of $\mathcal{V}$.
We denote by $ \mu_{s_0}^*\lrp{f} $ 
the pull-back of $f$ to the initial torus associated to $s_0$,
given by iterated mutations.
\end{notation}

\section{\texorpdfstring{Tropical duality, the $\gv$-fan and the special completion of $\cX$}{Tropical duality and stuff}} \label{sec:tropical duality}

In this section we recall some basic facts about the tropical geometry of cluster varieties.
We explain how two important results in this area (tropical duality of \cite{NZ} and the fan structure of the cluster complex, see \cite[Theorem~2.13]{GHKK})
lead very naturally to the {\it{special completion}} $\Xsp$ of the $\cX$-variety (see \cite{FG_X}).

\subsection{Tropicalization}\label{sec:tropicalization}

Cluster varieties have a particularly rich theory of tropicalization.
As described in \cite{GHK_birational}, cluster varieties are a very special class of log Calabi--Yau varieties.
As such, we can describe the tropicalization of a cluster variety in terms of divisorial discrete valuations and view the tropical space as encoding the cluster variety's log geometry. 
Meanwhile, in \cite{FG09} Fock and Goncharov described cluster varieties as positive schemes.
From this perspective, we can apply a {\it{functor of points}} and ask for the $\PP$-points of a cluster variety, for any semifield $\PP$.
In particular, we can take $\PP$ to be a {\it{tropical semifield}}  (see \cite[\S1.1]{FG09}).
These perspectives are discussed and related in \cite[\S2]{GHKK}. 
For us it will be enough to recall the following facts:
\begin{itemize}
    \item For every lattice $L$, the $\PP$-points of $T_L$ are given by 
    \[
    T_L^{\trop}(\PP)\cong L\otimes_{\Z} \PP,
    \]
    where $\otimes$ is the tensor product of abelian groups.
    \item Mutation $\mu_k : T_{L,s}\dashrightarrow T_{L,s'}$ induces a piecewise linear isomorphism 
    \[
    \mu_k^{\trop}: T_{L,s}^{\trop}(\PP) \to T_{L,s'}^{\trop}(\PP).
    \]
    In particular, the inclusions of cluster charts $T_{N^\circ}\hookrightarrow \cA$ and $T_{M}\hookrightarrow \cX$ induce bijections of $\PP$-points:
    \[
      T_{N^\circ}^{\trop}(\PP)\cong  \cA^{\trop}(\PP) \ \ \text{and} \ \ T_{M}^{\trop}(\PP)\cong \cX^{\trop}(\PP).
    \]
  \end{itemize}
We often consider $\Z$ and $\R$ endowed with  auxiliary addition given by ${x\oplus y:= \min(x,y)}$ for $x,y\in \Z$ or $\R$. Whenever we write $\cX^\trop(\Z)$ (resp.  $\cX^\trop (\R)$), we implicitly mean taking the tropical points with respect to $\Z$ (resp. $\R$) with the aforementioned semifield structure. 
In particular, tropicalizing a mutation is given by the usual tropicalization of a rational function (replace $+$ by $\min$, and $\cdot$ by $+$).

\begin{remark}
\thlabel{warning}
Let $ \mathcal{V}$ be a cluster variety. Even if $\mathcal{V}^\trop(\Z)$ and $\mathcal{V}^\trop (\R)$ can be identified as sets with $\Z $ and $\R$, respectively, these tropical spaces do not have a natural notion of addition. However, they possess in a natural way the structure of an $\mathbb{N}$-torsor (we can multiply tropical points by elements of $\mathbb{N}$) and a piecewise linear structure.
\end{remark}

\subsection{Tropical duality}\label{sec:trop_duality}

Fix an initial exchange matrix $B$ corresponding to an initial $N$-seed $s$ (see \thref{rem:notation}). Recall the definitions of {\bf g}- and {\bf c}-vectors (\thref{g-vectors} and \thref{c-vectors}, respectively). Let us recall a list of fundamental results concerning these vectors.

\begin{definition}
Let $v\in \T^n$. The {\bf c}-matrix $ 
C_{v}^{B}$ is the square matrix whose $j^{\mathrm{th}}$ column is the {\bf c}-vector ${\bf c}_{j;v}$.
Similarly, the {\bf g}-matrix $ G_{v}^{B}$ 
is the square matrix whose $j^{\mathrm{th}}$ column is the {\bf g}-vector ${\bf g}_{j;v}$.
\end{definition}

\begin{theorem}
\thlabel{c_sing-coherence} 
(Sign-coherence of {\bf c}-vectors \cite[Corollary 5.5]{GHKK})
For each $i\in[1,n]$ and $v\in \T^n$ the {\bf c}-vector ${\bf c}_{i;v}$ is non-zero and has either all entries non-negative or all entries non-positive.
\end{theorem}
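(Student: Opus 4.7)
The plan is to deduce the theorem from the scattering-diagram description of cluster theory, following GHKK. The conceptual starting point is \eqref{defining c vector}: the $\cv$-vector $\cv_{j;v}$ is the exponent vector of the tropical monomial $Y_{j;v}^{\mathrm{Trop}(y_1,\dots,y_n)}(y_1,\dots,y_n)$, so sign-coherence is the claim that, for every $v$ and $j$, the subtraction-free rational function $Y_{j;v}\in \Q_{\mathrm{sf}}(y_1,\dots,y_n)$ tropicalizes to a Laurent monomial whose exponents of $y_1,\dots,y_n$ are all of a single sign.

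The first step is to construct, following \cite[\S1]{GHKK}, the consistent cluster scattering diagram $\mathfrak D_{s_0}$ in the tropical space $\cA^{\trop}(\R)\cong M^{\circ}\otimes \R$ attached to the initial seed $s_0$. Its initial walls are the hyperplanes $v_i^\perp$ for $i\in \Iuf$, equipped with wall-crossing automorphisms built from $1+z^{v_i}$. The Kontsevich–Soibelman completion produces a consistent extension whose every wall carries a primitive attached direction $e\in N$ that is either a positive or a negative integral combination of the initial directions $e_1,\dots,e_n$. This is the content of the positivity theorem \cite[Theorem~1.28]{GHKK}: the wall functions added at each order are polynomials in $z^e$ for a single $e$ lying in the positive integral span of the initial directions, so their attached directions inherit sign-coherence from the initial ones.

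Second, I would identify the cluster complex as a subfan of $\mathfrak D_{s_0}$: the maximal cone $\cham_v$ indexed by $v\in \orT$ is the $\gv$-cone with extremal rays $\R_{\geq 0}\gv_{1;v},\dots,\R_{\geq 0}\gv_{n;v}$, and the facet opposite the ray $\R_{\geq 0}\gv_{j;v}$ lies on a wall of $\mathfrak D_{s_0}$ whose primitive attached direction is (up to sign) $\cv_{j;v}$. This is proved by induction along $\orT$, using the compatibility between matrix mutation (which governs the columns of $C_v^B$) and the wall-crossing formulas (cf.\ \cite[Theorems~2.13 and~5.5]{GHKK}). Granting this identification, sign-coherence of $\cv_{j;v}$ is immediate from sign-coherence of wall directions established in the previous step, and non-vanishing follows because $\cham_v$ is a full-dimensional simplicial cone and hence has non-zero facet normals; equivalently, one may invoke the Nakanishi–Zelevinsky tropical duality relation between $C_v^B$ and $G_v^{B^T}$, which forces the $\cv$-matrices to be invertible and so their columns never vanish.

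The technical heart — and the main obstacle — is the positivity theorem for the cluster scattering diagram: one must show that the iterative consistency completion never introduces a wall whose attached direction has entries of mixed sign. In the skew-symmetric case an alternative route proceeds through the Derksen–Weyman–Zelevinsky categorification via quivers with potentials, in which sign-coherence of $\cv$-vectors reflects positivity of dimension vectors of indecomposable rigid representations and the fact that their $F$-polynomials have constant term $1$. For the general skew-symmetrizable statement needed here, however, the scattering-diagram/positivity argument of GHKK is the natural one to carry out in detail.
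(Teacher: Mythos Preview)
The paper does not give its own proof of this statement; it is quoted verbatim as \cite[Corollary~5.5]{GHKK} and used as a black box. So there is no argument in the paper to compare your proposal against. What you have written is a sketch of the GHKK proof itself, and as such it is broadly on the right track: one builds the cluster scattering diagram, identifies the chambers of the cluster complex with the $\gv$-cones, identifies the facet normals with the $\cv$-vectors (up to sign), and reads off sign-coherence from the fact that every wall direction lies in $N^+$.

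Two corrections to your sketch. First, the sign-coherence of wall directions is \emph{not} a consequence of the positivity theorem \cite[Theorem~1.28]{GHKK}; it is part of the definition of a wall in a cluster scattering diagram (each wall is contained in $n_0^\perp$ for some primitive $n_0\in N^+$), and the Kontsevich--Soibelman consistency completion only ever adds walls of this form, order by order. So the ``technical heart'' you flag in your final paragraph is not an obstacle at all---it is built into the construction. The genuine work in GHKK lies elsewhere: in proving that the iterated cluster mutations really do produce the chamber structure of the scattering diagram and that the resulting facet normals are the $\cv$-vectors (this is the content of \cite[\S5]{GHKK}). Second, invoking Nakanishi--Zelevinsky tropical duality to get non-vanishing of $\cv$-vectors is circular, since their argument assumes sign-coherence (as the present paper notes just after \thref{th:tropical_duality}). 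Non-vanishing is more cheaply obtained by observing that each mutation acts on the $C$-matrix by a matrix in $\GL_n(\Z)$, so every $C$-matrix is unimodular and its columns are non-zero.
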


\begin{theorem}(Tropical duality  \cite{NZ})
\thlabel{th:tropical_duality}
For any skew-symmetrizable initial exchange matrix $B$ and any $v\in \T^n$, we have 
\begin{eqnarray}
    (G^B_v)^T=(C^{-B^T}_v)^{-1}.
\end{eqnarray}
\end{theorem}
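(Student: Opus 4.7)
The plan is to proceed by induction on the graph distance from $v_0$ in $\T^n$. The base case is immediate: at $v_0$ both $G^B_{v_0}$ and $C^{-B^T}_{v_0}$ equal the $n\times n$ identity matrix, so $(G^B_{v_0})^T = I = (C^{-B^T}_{v_0})^{-1}$.

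For the inductive step, I would derive explicit mutation recurrences along an edge $v \to v'$ labeled $k$. For the $g$-matrix, the recurrence is read off from the homogeneity of the cluster variables in the principal-coefficient cluster pattern (\thref{g-vectors}): comparing $\Z^n$-degrees on both sides of the exchange relation
\[
x_{k;v}\, x_{k;v'} = p_{k;v}^{-}\!\!\prod_i x_{i;v}^{[-b^v_{ik}]_+} + p_{k;v}^{+}\!\!\prod_i x_{i;v}^{[b^v_{ik}]_+}
\]
and selecting the correct monomial on the right using sign-coherence of the $\cv$-vectors (\thref{c_sing-coherence}), one obtains a recurrence of the form $G^B_{v'} = G^B_v\cdot E^{\epsilon}_{k,v} + B\cdot F^{\epsilon}_{k,v}$, where $\epsilon\in\{\pm 1\}$ is the common sign of the entries of $\mathbf{c}_{k;v}$ and $B := B_{v_0}$. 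For the $\cv$-matrix with initial data $-B^T$, the recurrence follows from tropicalizing the $Y$-pattern mutation formula \eqref{eq:Y-pattern_mutation} in $\mathrm{Trop}(y_1,\ldots,y_n)$: sign-coherence again ensures that $\oplus = \min$ selects a single monomial branch, giving $C^{-B^T}_{v'} = C^{-B^T}_v\cdot \widetilde E^{\epsilon}_{k,v}$ for an explicit elementary matrix $\widetilde E^{\epsilon}_{k,v}$ depending only on the sign $\epsilon$ and on the $k$-th row of $-B^T_v$.

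The heart of the argument is then a direct algebraic check: in each sign case, one verifies that $(E^{\epsilon}_{k,v})^T\cdot \widetilde E^{\epsilon}_{k,v} = I$, and that the correction term $B\cdot F^{\epsilon}_{k,v}$ contributes trivially after transposing and multiplying by $C^{-B^T}_v$. The inductive hypothesis $(G^B_v)^T C^{-B^T}_v = I$ is used to rewrite the columns of $B$ that appear in the correction term in terms of the columns of $C^{-B^T}_v$, causing that contribution to cancel. Combining these facts yields $(G^B_{v'})^T C^{-B^T}_{v'} = I$, completing the induction.

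The principal obstacle is the sign bookkeeping: each mutation step splits into two cases according to $\epsilon = \pm 1$, and the elementary matrices $E^{\epsilon}_{k,v}$ and $\widetilde E^{\epsilon}_{k,v}$ take genuinely different forms in the two cases. Sign-coherence of $\cv$-vectors (\thref{c_sing-coherence}) is the essential input that makes this case analysis well-posed: it guarantees that $\epsilon$ depends only on $(k,v)$ and not on the row index, so a single branch is used consistently throughout a column, and the pairing of the $\gv$- and $\cv$-recurrences becomes a clean matrix identity rather than a row-by-row case split.
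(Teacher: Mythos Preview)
The paper does not contain its own proof of this theorem: it is stated as a result of Nakanishi--Zelevinsky \cite{NZ}, and the only commentary is the subsequent remark that \cite{NZ} established it conditionally on sign-coherence of $\cv$-vectors (\thref{c_sing-coherence}), which was later proved in full generality in \cite{GHKK}. So there is no in-paper argument to compare against.

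That said, your outline matches the strategy of \cite{NZ}: induction on distance in $\T^n$, with the inductive step driven by explicit one-step mutation recurrences for $G^B_v$ and $C^{-B^T}_v$ and a sign case split governed by sign-coherence. Your description of the $\gv$-recurrence with a correction term $B\cdot F^{\epsilon}_{k,v}$ and the claim that it cancels after pairing with $C^{-B^T}_v$ is the right shape, but as written it is a sketch rather than a proof: you have not actually written down $E^{\epsilon}_{k,v}$, $\widetilde E^{\epsilon}_{k,v}$, or $F^{\epsilon}_{k,v}$, nor verified the identity $(E^{\epsilon}_{k,v})^T\widetilde E^{\epsilon}_{k,v}=I$ or the cancellation. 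In the actual argument one also needs to match the sign $\epsilon$ appearing in the $G^B$-recurrence (which is the sign of $\mathbf{c}^B_{k;v}$) with the sign appearing in the $C^{-B^T}$-recurrence (which is the sign of $\mathbf{c}^{-B^T}_{k;v}$); this requires an auxiliary duality between the two families of $\cv$-vectors, which \cite{NZ} establishes simultaneously. If you intend this as a self-contained proof rather than a pointer to \cite{NZ}, that coupling should be made explicit.
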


\begin{remark}
Nakanishi and Zelevinsky proved \thref{th:tropical_duality} provided sign coherence of {\bf c}-vectors holds. 
Special cases of \thref{c_sing-coherence} were known before the most general case was tackled in \cite{GHKK}. 
\end{remark}

\begin{corollary}
Let $v\in \T^n$. The {\bf g}-vectors  ${\bf g}^B_{1;v}, \dots ,{\bf g}^B_{n;v} $ span a rational strongly convex polyhedral cone $ \mathcal{G}^B_v$ of dimension $n$ in $ \R^n$. The dual cone  $\lrp{\mathcal{G}^B_v}^\vee=\mathcal{C}^{-B^T}_v$ is the rational polyhedral cone spanned by the {\bf c}-vectors ${\bf c}^{-B^T}_{1;v}, \dots ,{\bf c}^{-B^T}_{n;v} $. 
\end{corollary}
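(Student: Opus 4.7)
The plan is to derive the entire corollary from tropical duality (\thref{th:tropical_duality}), after which only elementary linear algebra is needed. The identity $(G^B_v)^T=(C^{-B^T}_v)^{-1}$ immediately forces $G^B_v$ to be an invertible integer matrix, so its columns $\vb{g}^B_{1;v},\dots,\vb{g}^B_{n;v}$ form a $\Q$-basis of $\R^n$. First I would record that a cone positively generated by $n$ linearly independent vectors of $\Z^n$ is automatically a rational simplicial cone of dimension $n$, and that any simplicial cone is strongly convex (it cannot contain any nonzero linear subspace, since a nontrivial linear relation among its generators would contradict the linear independence just established). This settles the first sentence of the corollary.

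For the duality statement, I would invoke the following standard fact from toric/convex geometry: if $G$ is an invertible $n\times n$ matrix with columns $g_1,\dots,g_n$, then the dual of the simplicial cone $\sigma = \sum_{i} \R_{\geq 0}\, g_i$ is the simplicial cone positively generated by the columns of $(G^T)^{-1}$. The verification is a one-liner: $w\in\sigma^\vee$ iff $\langle w, g_i\rangle\geq 0$ for all $i$, iff the vector $G^T w$ has non-negative entries, iff $w$ is a non-negative combination of the columns of $(G^T)^{-1}$.

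Applying this to $G=G^B_v$ gives that $\lrp{\mathcal{G}^B_v}^\vee$ is spanned by the columns of $\lrp{(G^B_v)^T}^{-1}$. By tropical duality this matrix equals $C^{-B^T}_v$, whose columns are by definition $\vb{c}^{-B^T}_{1;v},\dots,\vb{c}^{-B^T}_{n;v}$. Therefore $\lrp{\mathcal{G}^B_v}^\vee = \mathcal{C}^{-B^T}_v$, completing the proof. Note that sign-coherence of \textbf{c}-vectors (\thref{c_sing-coherence}) is not needed directly here, since once $C^{-B^T}_v$ is known to be invertible its columns are automatically linearly independent and form the extremal rays of the dual cone.

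There is no real obstacle: the deep input has already been supplied by \thref{th:tropical_duality}, and the remaining content is the observation that the inverse transpose of a matrix of cone generators produces a matrix of generators for the dual cone. The only point requiring mild care is keeping track of the two different exchange matrices $B$ and $-B^T$ in the tropical duality statement, to ensure the dual of a $\vb{g}$-cone for $B$ is indeed a $\vb{c}$-cone for $-B^T$ and not for $B$ itself.
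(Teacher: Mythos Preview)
Your proposal is correct and follows the same approach the paper intends: the corollary is stated immediately after \thref{th:tropical_duality} with no proof given, so the paper treats it as an immediate consequence of tropical duality together with the elementary linear-algebra fact that the dual of a simplicial cone generated by the columns of an invertible matrix $G$ is generated by the columns of $(G^T)^{-1}$. Your write-up simply makes explicit what the paper leaves to the reader.
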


\begin{theorem}(\cite[Theorem 2.13]{GHKK})\thlabel{g-fan}
For any initial data the cones $\chams{v}^B$ are the maximal cones of a possibly infinite simplicial fan.
\end{theorem}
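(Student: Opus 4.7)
The plan is to verify the two defining properties of a (not-necessarily-complete) simplicial fan: (i) each $\mathcal{G}_v^B$ is a simplicial rational polyhedral cone of full dimension $n$, and (ii) for any two $v, v' \in \T^n$, the intersection $\mathcal{G}_v^B \cap \mathcal{G}_{v'}^B$ is a face common to both. Property (i) is essentially immediate from tropical duality (\thref{th:tropical_duality}): since $(G_v^B)^T = (C_v^{-B^T})^{-1}$, the $\mathbf{g}$-matrix $G_v^B$ is invertible over $\Q$, so its columns $\mathbf{g}^B_{1;v}, \dots, \mathbf{g}^B_{n;v}$ form an $\R$-basis of $\R^n$, making $\mathcal{G}_v^B$ a full-dimensional simplicial cone with integer generators.

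For adjacent vertices $v \overset{k}{\longrightarrow} v'$, I would invoke the $\mathbf{g}$-vector mutation rule (e.g.\ \cite[Proposition~6.6]{FZ_clustersIV}, which becomes unconditional once the sign-coherence of $\mathbf{c}$-vectors \thref{c_sing-coherence} is available): one has $\mathbf{g}^B_{i;v'} = \mathbf{g}^B_{i;v}$ for $i \neq k$, while $\mathbf{g}^B_{k;v'}$ equals $-\mathbf{g}^B_{k;v}$ plus a specific non-negative integer combination of the remaining initial $\mathbf{g}$-vectors (the sign of the $k$-th $\mathbf{c}$-vector at $v$ selects between the two equivalent formulas). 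Hence $\mathcal{G}_v^B$ and $\mathcal{G}_{v'}^B$ share the codimension-$1$ face $\mathrm{cone}(\mathbf{g}^B_{i;v} : i \neq k)$, and their relative interiors lie on opposite sides of the hyperplane spanned by this face. Thus the fan condition is satisfied locally across every edge of the exchange graph.

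The main obstacle is the global claim: showing that two maximal cones $\mathcal{G}_v^B$ and $\mathcal{G}_{v'}^B$ coming from combinatorially unrelated vertices meet only in a common face. A naive induction along paths in $\T^n$ does not suffice, since $\mathbf{g}$-vectors from distant seeds could a priori collide in unexpected ways. I would follow the scattering-diagram proof of \cite[Theorem~2.13]{GHKK}: build the consistent cluster scattering diagram $\mathfrak{D}_s$ in $M^\circ \otimes \R$ from the initial data, declare its chambers to be the connected components of the complement of the support of $\mathfrak{D}_s$, and identify each $\mathcal{G}_v^B$ with the \emph{cluster chamber} obtained from the initial chamber $\mathcal{G}_{v_0}^B = \R^n_{\geq 0}$ by successively crossing the walls prescribed by the mutation path $v_0 \to v$. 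Because chambers of any consistent scattering diagram meet only along subsets of walls of codimension at least one, the fan property for the $\mathcal{G}_v^B$ follows. The identification of $\mathbf{g}$-vector data with the chamber structure of $\mathfrak{D}_s$ itself rests on the sign-coherence of $\mathbf{c}$-vectors (\thref{c_sing-coherence}), while the possibility that $\bigcup_v \mathcal{G}_v^B \subsetneq M^\circ \otimes \R$ corresponds to the presence of non-cluster chambers in $\mathfrak{D}_s$, which explains why the resulting fan need not be complete and may be infinite.
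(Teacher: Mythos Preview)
Your proposal is correct and aligns with how the paper handles this statement: the paper does not prove \thref{g-fan} independently but cites \cite[Theorem~2.13]{GHKK} and sketches their scattering-diagram argument, which is exactly the route you outline for the global fan property. Your additional local observations (full-dimensionality via tropical duality, shared facets for adjacent cones via the sign-coherent $\gv$-vector mutation rule) are accurate preliminaries that the paper leaves implicit in its summary of the GHKK proof.
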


This was originally conjectured by Fock and Goncharov in \cite[Conjecture~1.3]{FG_X},
and lead to the notion of the {\it{special completion}} of an $\cX$-variety as we will see below.
In their proof of \thref{g-fan}, Gross, Hacking, Keel and Kontsevich build {\it{scattering diagrams}} for cluster varieties
(see \cite[\S1]{GHKK}, \cite{GS11} and \cite{KS14} where the authors use the term \emph{wall crossing structures}). 
They show that a subset of the scattering diagram (called the {\it{cluster complex}}) has a generalized fan structure, with chambers full-dimensional simplicial cones when there are no frozen directions.
They then identify these chambers with the cones $\chams{v}^B$.
When there are frozen directions,
the chambers in the scattering diagram are no longer strongly convex\footnote{This is why we say {\it{generalized}} fan, as in \cite[Definition~6.2.2]{CLS}.}.
This lack of strong convexity corresponds to invertibility of frozen $\cA$-variables.
Allowing these frozen variables to vanish partially compactifies $\cA$, and correspondingly cuts a convex subset $\Xi$ out of the scattering diagram\footnote{$\Xi$ is usually described as $\lrc{W^\trop\geq 0}$, where $W:\cA^\vee \to \C$ is the Landau--Ginzburg potential associated to this partial compactification of $\cA$.}.
This subset $\Xi$ inherits a fan structure from the scattering diagram,
and the maximal cones of this fan are precisely the cones
$\chams{v}^B$.
More precisely, we have the following definition.

\begin{definition}
Let $\Delta^+$ be the (Fock–Goncharov) cluster complex associated to the initial data $\lrp{\Gamma,v_0}$  without frozen directions (see \cite[Definition 2.9]{GHKK}). 
Let $F\subset I $ be a subset of the index set. 
We denote by $\Delta^+_F$ the subfan of $\Delta^+$ consisting of cones that can be linked to the initial cone  by iterated mutations at vertices in $I\setminus F$. 
We call $\Delta^+_F$ the {\it{$\gv$-fan}} associated to the initial data $\lrp{\Gamma',v_0}$
obtained from $\lrp{\Gamma,v_0}$ by freezing $F \subset I$.
\end{definition}

See Figure~\ref{fig:Delta F +} for an example in type $A_3$. 

\noindent
\begin{minipage}{\linewidth}
\captionsetup{type=figure}
\begin{center}
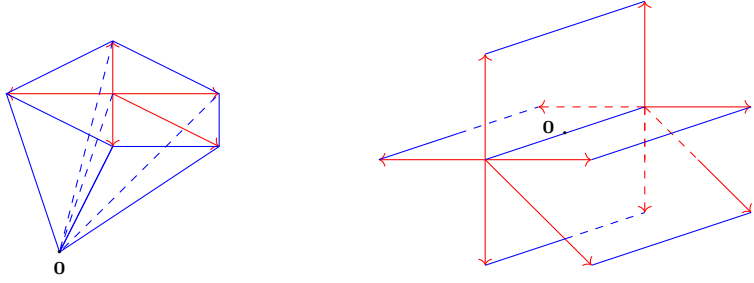

\begin{tikzpicture}[scale=.7]

\draw[->, red] (0,0) -- (0,1);
\draw[->, red] (0,0) -- (2,0);
\draw[->, red] (0,0) -- (2,-1);
\draw[->, red] (0,0) -- (0,-1);
\draw[->, red] (0,0) -- (-2,0);

\draw[blue] (0,1) -- (2,0) -- (2,-1) -- (0,-1) -- (-2,0) -- (0,1);
\draw[blue] (-2,0) -- (-1,-3) -- (0,-1);
\draw[blue] (0,-1) -- (-1,-3);
\draw[blue] (-1,-3) -- (2,-1);
\draw[dashed, blue] (0,1) -- (-1,-3) -- (2,0);
\draw[dashed, blue] (0,0) -- (-1,-3);
\node[below] at (-1,-3) {\tiny $\bf 0$};
\node at (-1,-3) {$\cdot$};

\begin{scope}[xshift=7cm, yshift=-1.25cm]

\draw[->, red] (0,0) -- (0,2);
\draw[->, red] (0,0) -- (2,0);
\draw[->, red] (0,0) -- (2,-2);
\draw[->, red] (0,0) -- (0,-2);
\draw[->, red] (0,0) -- (-2,0);

\draw[->, red] (3,1) -- (3,3);
\draw[->, red, dashed] (3,1) -- (1,1);
\draw[->, red] (3,1) -- (5,1);
\draw[->, red,dashed] (3,1) -- (3,-1);
\draw[dashed, red] (3,1) -- (4,0);
\draw[->, red] (4,0) -- (5,-1);
\draw[dashed, red] (3,1) -- (3,0);

\draw[blue] (0,0) -- (3,1);
\node[above,left] at (1.5,.625) {\tiny $\bf 0$};
\node at (1.5,.5) {$\cdot$};
\draw[blue] (2,0) -- (5,1);
\draw[blue] (0,2) -- (3,3);
\draw[blue] (2,-2) -- (5,-1);
\draw[blue] (0,-2) -- (1.5,-1.5);
\draw[blue, dashed] (1.5,-1.5) -- (3,-1);
\draw[blue] (-2,0) -- (-.5,.5);
\draw[blue, dashed] (-.5,.5) -- (1,1);

\end{scope}

\end{tikzpicture}
    \captionof{figure}{\label{fig:Delta F +}
    Comparison of $\gv$-fan $\Delta^+_F$ and cluster complex $\cc$ for $A_3$ quiver with one frozen direction.
    Here $Q = \tc{blue}{1}\leftarrow 2 \leftarrow 3$ and $F= \lrc{1}$. 
    A schematic of the $\gv$-fan appears on the left and corresponds to the case in which $A_1$ is  non-invertible.  The cluster complex is on the right, corresponding to an invertible $A_1$ variable.
    }
\end{center}
\end{minipage}

\begin{remark}
We do not lose any generality in the above definition by starting in the case without frozen directions, since
we can reduce to this case by simply unfreezing all directions.
\end{remark}

\begin{remark}\thlabel{rem:ConesSeeds}
Until this point we have been working with labeled clusters and labeled $Y$-seeds. Notice that the cones $ \mathcal{G}^B_v$ and $ \mathcal{G}^B_{v'}$ are equal whenever the unlabeled cluster associated to $v$ is equal to the unlabeled cluster associated to $v'$ (see \thref{unlabeling}). 
Algebraically this corresponds to the periodicity of the cluster pattern associated to $B$ and geometrically to the redundancies in the atlas of $\cA $.
By \cite[Theroem 5.9]{Nak19} the periodicity of the cluster pattern associated to $B$ is the same as the periodicity of the $Y$-pattern associated to $B$. Further, by \thref{periodicity} and \cite[Proposition 6.1]{CaoLi} we know that any cluster or $Y$-pattern with coefficients with initial matrix $B$ has the same periodicity as its coefficient-free counterpart.
Therefore, to avoid redundancies, we can index the tori in the atlas of a cluster variety (of either type $\cA$ or $\cX$, with or without coefficients) by the maximal cones of the corresponding $\gv$-fan rather than vertices of $\T^n$, e.g. we can replace ``$T_{N^\circ,s}$ for $s\in \orT$'' by ``$T_{N^\circ,\cham}$ for $\cham\in \Delta^+_F(\mathrm{max})$'', where $\Delta^+_F(\mathrm{max})$ denotes the set of maximal cones of $\Delta^+_F$.  
\end{remark}

We are now prepared to describe the special completion of an $\cX$-variety.
By the above discussion, we have a fan whose maximal cones are $\cham \in \Delta^+_F({\rm max}) $, and along with it we have the dual cones $\chamdual$.
This brings to mind the construction of toric varieties via fans, a construction which we review in \S\ref{sec:fan_toric}.

Let $S_{\cham}$ be the monoid of $M^\circ$-points of $\chamdual$.
The monoid algebra $\C\lrb{S_{\cham}}$ 
is a polynomial algebra in $n$ algebraically independent variables.
We write 
\eqn{
\A^{n}_{M^\circ,\cham}:= \Spec\lrp{\C\lrb{ S_{\cham}}} \ \ 
\text{ and } \ \ 
T_{M^\circ,\cham}:= \Spec\lrp{\C\lrb{ S_{\cham}^{\gp}}},
}
where $S_{\cham}^{\gp}$
is the group completion of the monoid $S_{\cham}$.

\begin{definition}\thlabel{special}
The {\it{special completion}} of the $\cX$-cluster variety $\cX_{\Gamma^\vee} = \cA^\vee$ is 
\eqn{
\Xsp_{\Gamma^\vee} := \bigcup_{\cham \in \Delta^+_F(\mathrm{max})}\A^{n}_{M^\circ,\cham},
}
the scheme with the patches $\A^{n}_{M^\circ,\cham}$ glued by the usual $\cX$-mutation formula \eqref{eq:X_mut}.
\end{definition}

\begin{remark}\thlabel{SpecialRemark}
Observe that this definition treats cluster varieties with and without frozen directions together.
Additionally, by starting with the Langlands dual data $\Gamma^\vee$, we obtain the special completion $\Xsp_{\Gamma}$ of $\cX_{\Gamma}$.
\end{remark}

We elaborate on this definition.
We take advantage of \thref{SpecialRemark} to treat the special completion $\Xsp_{\Gamma}$, which allows us to drop the superscript ${}^\circ$.
Explicitly, the birational map 
$\mu_k: \A^n_{M,\cham} \dashrightarrow  \A^n_{M,\cham'} $ 
restricts to an isomorphism
\eqn{\A^n_{M,\cham} \setminus \lrp{ \lrc{X_k=0} \cup \lrc{X_k=-1} }
\stackrel{\cong}{\longrightarrow}
\A^n_{M,\cham'} \setminus \lrp{ \lrc{X'_k=0} \cup \lrc{X'_k=-1} } ,}
and we glue $\A^n_{M,\cham}$ to $\A^n_{M,\cham'}$ along these subsets.
To see that this restriction is indeed an isomorphism, 
we show that $\mu_k^*$ identifies the rings
\[
A_{\cham}:= \C\lrb{X_k^{\pm 1},X_{i\neq k}, \lrp{X_k+1}^{-1}}
\ \ \ \text{and} \ \ \ 
A_{\cham'}:=\C\lrb{{X'_k}^{\pm 1},X'_{i\neq k}, \lrp{X'_k+1}^{-1}}.
\]
Note that it does not matter whether we adjoin $(X_k +1)^{-1}$ or $(X_k^{-1} +1)^{-1}$ as a generator of $A_{\cham}$ since $(X_k^{-1} +1)^{-1} = X_k (X_k+1)^{-1} $ and $X_k^{\pm 1}$ is in $A_{\cham}$.

Applying $\mu_k^*$ to generators of $A_{\cham'}$ gives the polynomial ring
\eqn{
 \C\lrb{X_k^{\pm 1},X_{i\neq k}\lrp{1+X_k^{- \sgn \epsilon_{ik}}}^{-\epsilon_{ik}},\lrp{X_k^{-1} +1}^{-1}}, 
}
and since this ring contains the elements $(1+X_k)^{\pm 1}$ and $(1+X_k^{-1})^{\pm 1}$, we can always cancel the factor $( 1+X_k^{- \sgn \epsilon_{ik}} )^{-\epsilon_{ik}}$, leaving $X_{i\neq k}$.
So
\eqn{
 \C\lrb{X_k^{\pm 1},X_{i\neq k}\lrp{1+X_k^{- \sgn \epsilon_{ik}}}^{-\epsilon_{ik}},\lrp{X_k^{-1} +1}^{-1}} = A_{\cham}, 
}
and $\mu_k$ restricts to an isomorphism of the open subschemes described above.

\begin{remark}\thlabel{nonseparated}
It is well known that $\cX$-cluster varieties are generally not separated (see e.g. \cite[Remark~4.2]{GHK_birational}).
However, with only one exception, special completions of $\cX$-cluster varieties are {\emph{never}} separated.
The one exception is type $A_1$, where we simply obtain $\PP^1$.
In all other cases, 
let $p$ be the point in $\A_{M,{\cham}}^n$ given by $X_k = -1$, $X_{i\neq k} =0$, and $q$ the point in $\A_{M,{\cham'}}^n$ given by $X_k' = -1$, $X_{i\neq k}' =0$.
Note that $p$ and $q$ are not identified since they lie in the complement of the gluing locus.
But, any (complex analytic) neighborhood of one must contain the other.
\end{remark}

\section[Toric degenerations]{Toric degenerations of the special completion of \texorpdfstring{$\cX$}{X}}
\label{sec:toric_degenerations}

In this section we build toric degenerations of specially completed $\cX$-varieties (with and without frozen directions).  
We then discuss properties of both the fibers and the total space.
The toric degenerations we construct here are flat families over the affine space ${\A^n= \Spec\lrp{\C\lrb{t_1,\dots,t_n}}}$ with central fiber a toric variety.
The idea is to take the $\A^n$-patches in the atlas for $\Xsp$ and alter the way they glue together.
So each fiber of the family will have the same collection of $\A^n$-patches, but we introduce coefficients to the mutation formula \eqref{eq:X_mut}, i.e. to the transition functions.
This modified mutation formula is the specialization of \eqref{eq:genralformula} to the case of principal coefficients. 

\begin{remark}
For expository purposes, we restrict to the skew-symmetric case. All of our results hold in the skew-symmetrizable case too. 
\end{remark}

Before getting into this discussion, we briefly recall the construction of toric varieties via fans.  
The discussion here closely follows portions of \cite{FultonTV}, \cite{CLS}, and \cite{Oda}.

\subsection{Fans for toric varieties}\label{sec:fan_toric}

A fan is a combinatorial object that gives an explicit recipe for building a toric variety as a scheme.
It is in essence a pictorial representation of an atlas, where the building blocks (affine schemes) are drawn as cones and gluing is given by intersection of cones.

We start with a lattice $N\cong \Z^n$, the cocharacter lattice of the defining torus $T$.
The fan will live in $N_\R := N\otimes \R$.
We call the fan $\Sigma$, and the associated toric variety $\TV{\Sigma}$.
By definition, $\Sigma$ is a collection of {\it{strongly convex}} {\it{rational polyhedral cones in $N_\R$}} satisfying certain properties, which we detail after explaining the italicized terms.
A {\it{rational polyhedral cone in $N_\R$}} is the $\R_{\geq 0}$-span of a finite number of integral vectors.
It is {\it{strongly convex}} if does not contain a non-zero linear subspace.
Observe that every face of a polyhedral cone is again a polyhedral cone, no matter the codimension of the face.
Now for $\Sigma$ to be a {\it{fan}}, it must satisfy:
\begin{itemize}
    \item If $\sigma \in \Sigma$, and $\tau$ is a face of $\sigma$, then $\tau \in \Sigma$ as well.
    \item If $\sigma, \sigma' \in \Sigma$, then their intersection $\sigma \cap \sigma'$ is a face $\tau$ of each.
\end{itemize}

\begin{remark}
In much of the literature, e.g. \cite{FultonTV} and \cite{CLS}, fans are assumed to consist of a {\emph{finite}} collection of cones.
The resulting toric varieties are then schemes of finite type over $\C$.
Most cluster varieties are only {\emph{locally}} of finite type, so unsurprisingly, we do not require our fans to have finitely many cones.
Toric varieties with infinite fans are treated in \cite{Oda}.
Both, toric varieties and cluster varieties are schemes of finite type precisely when the fan ($\gv$-fan in the cluster case) is finite.
Observe that a cluster variety is of {\emph{cluster}} finite type (meaning there are only finitely many seeds)
precisely when it is a finite type scheme over $\C$.
\end{remark}

Given a fan $\Sigma$ in $N_{\R}$, every cone $\sigma \in \Sigma$ defines an affine scheme in the atlas for $\TV{\Sigma}$ as follows.
Let $M= \Hom\lrp{N,\Z}$ and consider the dual cone $\sigma^\vee\subset M_{\R}$.
Its integral points live in $M$, so are characters of $T$.
These integral points $\sigma^\vee \cap M$ form a submonoid of $M$, denoted $S_\sigma$.
The affine patches in the atlas for $\TV{\Sigma}$ are spectra of the monoid rings $\C\lrb{S_\sigma}$.
We write $U_\sigma:= \Spec\lrp{\C\lrb{\sigma^\vee \cap M}}$.

Every face of $\sigma$ is the intersection of $\sigma$ with some hyperplane $m^\perp$, where $m\in \sigma^\vee \cap M$.
If $\tau$ is a face of $\sigma$, then $\sigma^\vee \subset \tau^\vee$ and $S_\tau = S_\sigma + \Z_{\geq 0}\lrp{-m} $ (see Figure~\ref{fig:cones and dual cones}).
Then $\C\lrb{S_\tau}$ is the localization of $\C\lrb{S_\sigma}$ by the character $z^{m}$. This gives an embedding $U_\tau \hookrightarrow U_\sigma$.

If $\tau$ is a common face of $\sigma$ and $\sigma'$, then the inclusions of $U_\tau\hookrightarrow U_\sigma$ and $U_\tau\hookrightarrow U_{\sigma'}$ glue the affine patches $U_\sigma$ and $U_{\sigma'}$ along $U_\tau$, just as the inclusion of $\tau$ into $\sigma$ and $\sigma'$ glues the cones $\sigma$ and $\sigma'$ together.
More precisely, if $\tau = \sigma \cap \sigma'$ then 
$\sigma\cap m^\perp = \tau = \sigma' \cap m^\perp$
for some $m \in \sigma^\vee \cap -\sigma'^\vee \cap M$,
and we have
\eqn{ U_\sigma \hookleftarrow \lrp{U_\sigma}_{z^m} = U_\tau = \lrp{U_{\sigma'}}_{z^{-m}} \hookrightarrow U_{\sigma'}.}

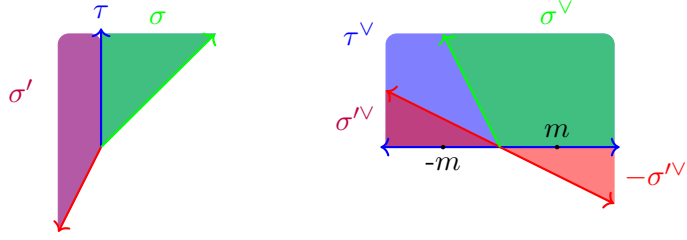
\begin{figure}
    \centering
\begin{tikzpicture}[scale=.75]
\draw[fill, blue, opacity=.5] (0,0) -- (2,2) -- (0,2) -- (0,0);
\draw[fill, green, opacity=.5] (0,0) -- (2,2) -- (0,2) -- (0,0);

\draw[fill, rounded corners, red, opacity=.5] (0,0) -- (0,2) -- (-.75,2) -- (-.75,-1.5) -- (0,0); 
\draw[fill, rounded corners, blue, opacity=.3] (0,0) -- (0,2) -- (-.75,2) -- (-.75,-1.5) -- (0,0); 

\draw[->, thick, blue] (0,0) -- (0,2.1);
\draw[thick, ->, green] (0,0) -- (2,2);
\draw[->,thick,red] (0,0) -- (-.75,-1.5);

\node[above] at (1,2) {\textcolor{green}{$\sigma$}};
\node[left] at (-1,1) {\textcolor{purple}{$\sigma'$}};
\node[above] at (0,2.1) {\textcolor{blue}{$\tau$}};

%\draw[<->, thick] (2.5,1) -- (4,1);

\begin{scope}[xshift=7cm]
\draw[fill, blue, rounded corners, opacity=.5] (-2,0) -- (-2,2) -- (2,2) -- (2,0);
\draw[fill, red, opacity=.5] (0,0) -- (-2,0) -- (-2,1) -- (0,0);
\draw[fill, green, opacity=.5, rounded corners] (0,0) -- (-1,2) -- (2,2) -- (2,0);

\draw[fill, red, opacity=.5] (0,0) -- (2,-1) -- (2,0);

\draw[<->, thick, blue] (-2.1,0) -- (2.1,0);
\draw[<->, red, thick] (2,-1) -- (-2,1);
\draw[->, thick, green] (0,0) -- (-1,2);

\draw[fill] (1,0) circle [radius=0.03];
\node[above] at (1,0) {$m$}; 
\draw[fill] (-1,0) circle [radius=0.03];
\node[below] at (-1,0) {-$m$}; 
\node[right] at (2,-.5) {\textcolor{red}{$-\sigma'^\vee$}};
\node[above] at (1,2) {\textcolor{green}{$\sigma^\vee$}};
\node[above, left] at (-2,2) {\textcolor{blue}{$\tau^\vee$}};
\node[left] at (-2,.5) {\textcolor{purple}{$\sigma'^\vee$}};

\end{scope}
\end{tikzpicture}

    \caption{Two adjacent cones $\sigma,\sigma'\subset N_{\mathbb R}$ with $\sigma\cap\sigma'=\tau$ on the left and their dual cones $\sigma^\vee,\sigma'^\vee\subset\tau^\vee\subset M_{\mathbb R}$ on the right.}
    \label{fig:cones and dual cones}
\end{figure}

\subsection{Toric degenerations of $\Xsp$}\label{sec:toric_degen_x}

Consider fixed data $\Gamma_{\text{prin}}$ as in \thref{def:Gamma_prin} and fix an initial $\gv$-cone $\cham_0$. % and consider principal coefficients for this seed. 
Let $F\subset I$ be the set of frozen directions. 
In view of \thref{dictionary_2} and \thref{rem:ConesSeeds} set $t_i := p_i = p_{i;\cham_0}$ and $ X_i= X_{i;\cham_0} :=y_i $ for all $1 \leq i \leq n$.
Then the mutation formula \eqref{eq:X_mut_w_coef} specializes for $\cX$-variables with principal coefficients to 
\begin{align}\label{eq:mutfamily}
\mu_k^*\lrp{\Xt_{i;\cham'}} = \left\{\begin{matrix} \Xt_{i;\cham}^{-1} &\text{ if } i=k,\\
\Xt_{i;\cham}\lrp{\bt^{[\sgn(\epsilon_{ik})\bc_{k;\cham}]_+} + \bt^{[-\sgn(\epsilon_{ik})\bc_{k;\cham}]_+}\Xt_{k;\cham}^{-\sgn(\epsilon_{ik})}}^{-\epsilon_{ik}} &\text{ if } i\not =k.\end{matrix}\right.
\end{align}
We can use sign coherence of $\cv$-vectors (\thref{c_sing-coherence}) to rewrite the $i\neq k$ case.
If the sign of $\bc_{k;\cham}$ coincides with $\sgn(\epsilon_{ik})$, then for $i\not =k$ \eqref{eq:mutfamily} simplifies to
\begin{eqnarray}\label{eq:same sign}
\mu_k^*\lrp{\Xt_{i;\cham'}}=\Xt_{i;\cham}\lrp{\bt^{|\bc_{k;\cham}|}+\Xt_{k;\cham}^{-\sgn(\epsilon_{ik})}}^{-\epsilon_{ik}}.
\end{eqnarray}
If $\bc_k$ and $\epsilon_{ik}$ have opposite signs we instead obtain 
\begin{eqnarray}\label{eq:opp sign}
\mu_k^*\lrp{\Xt_{i;\cham'}}=\Xt_{i;\cham}\lrp{1+\bt^{|\bc_{k;\cham}|}\Xt_{k;\cham}^{-\sgn(\epsilon_{ik})}}^{-\epsilon_{ik}}.
\end{eqnarray}
Here $|\bc_{k;\cham}|$ denotes the positive $\cv$-vector, i.e. $|\bc_{k;\cham}|=\lrp{|c_{k1;\cham}|,\dots, |c_{kn;\cham}|}\in \Z_{\ge 0}^n$. 
An example in type $A_2$ is given in Table~\ref{tab:A2_principal coeff}.

\begin{table}[!htbp]
    \centering
    \begin{tabular}{|c|c|c|c|c|c|c|}
    \hline
         $s$ & $\boldsymbol{\varepsilon}_s$ & $\mathbf C_s$ & $t_{1;s}$ &$t_{2;s}$ & $\widetilde{X}_{1;s}$ & $\widetilde{X}_{2;s}$ \\
         \hline \hline
          
          0  & $\left(\begin{smallmatrix} 0& -1 \\ 1 &0\end{smallmatrix}\right)$ & 
          $\left(\begin{smallmatrix} 1 & 0 \\ 0 & 1\end{smallmatrix}\right)$ &
          $t_1$& 
          $ t_2$&
          $X_1$ & 
          $X_2$  \\  \hline
          $\updownarrow \mu_2$ \\ \hline
          1  & $\left(\begin{smallmatrix} 0& 1 \\ -1 &0\end{smallmatrix}\right)$ & 
          $\left(\begin{smallmatrix} 1 & 0 \\ 0 & -1\end{smallmatrix}\right)$ &
          $t_1$  &
          $\frac{1}{t_2} $&
          $X_1(t_2X_2+1)$ & 
          $\frac{1}{X_2}$ \\ \hline
          $\updownarrow \mu_1$ \\ \hline
          2  & $\left(\begin{smallmatrix} 0& -1 \\ 1 &0\end{smallmatrix}\right)$ & 
          $\left(\begin{smallmatrix} -1 & 0 \\ 0 & -1\end{smallmatrix}\right)$ &
          $\frac{1}{t_1}$&
          $\frac{1}{t_2}$ &
          $\frac{1}{X_1(t_2X_2+1)}$ &
          $\frac{t_1t_2X_1X_2+t_1X_1+1}{X_2} $  \\ \hline
          $\updownarrow \mu_2$ \\ \hline
          3  & $\left(\begin{smallmatrix} 0& 1 \\ -1 &0\end{smallmatrix}\right)$ & 
          $\left(\begin{smallmatrix} -1 & 0 \\ -1 & 1\end{smallmatrix}\right)$ &
          $\frac{1}{t_1 t_2}$&
          $t_2 $&
          $\frac{t_1X_1+1}{X_1X_2}$ &
          $\frac{X_2}{t_1t_2X_1X_2+t_1X_1+1}$ \\ \hline
          $\updownarrow \mu_1$ \\ \hline
          4  & $\left(\begin{smallmatrix} 0& -1 \\ 1 &0\end{smallmatrix}\right)$ & 
          $\left(\begin{smallmatrix} 1 & -1 \\ 1 & 0\end{smallmatrix}\right)$ &
          $t_1t_2$&
          $\frac{1}{t_1} $ &
          $\frac{X_1X_2}{t_1X_1+1}$ &
          $\frac{1}{X_1}$  \\ \hline
          $\updownarrow \mu_2$ \\ \hline
          5 & $\left(\begin{smallmatrix} 0& 1 \\ -1 &0\end{smallmatrix}\right)$ & 
          $\left(\begin{smallmatrix} 0 & 1 \\ 1 & 0\end{smallmatrix}\right)$ &
          $t_2$ &
          $t_1$ &
          $X_2$ & 
          $X_1$ \\ \hline
    \end{tabular}
    \caption{Example in type $A_2$. Our convention agrees with \cite[Tables 1-4]{FZ_clustersIV}, note only that we are considering geometric notation and hence the matrices $\boldsymbol{\varepsilon}_s=B_s^T$.}
    \label{tab:A2_principal coeff}
\end{table}

In order to talk about degrees of $\cX$-variables with principal coefficients we consider the usual $\Z^n_{\ge 0}$-grading on the polynomial ring in $n$ variables $X_{1},\dots,X_{n}$.
We extend this notion to a $\Z^n$-grading on the ring of rational functions $\C(X_{1},\dots,X_{n})$. For $h=\frac{f}{g}$ with $f$ and $g$ non-zero homogeneous polynomials let $\deg(h):=\deg(f)-\deg(g)$.
We call this the \emph{standard grading}. 
In the case of $\cAp$ by \thref{g-vectors}, \thref{dictionary p-map} and \thref{tilde_extension} we see that $-\deg(t_i)=b_i=\deg(\widetilde{p^*\lrp{X_{i}}})$.
Motivated by the desire to construct a family fiber-wise dual to $\cAp$, we therefore set
\[
\deg\lrp{t_i} := -\deg(\Xt_{i}).
\]
Note that for $R=\C[t_1,\dots,t_n]$ this gives us the notion of the standard $\Z^n$-grading also on $R(\Xt_{1},\dots,\Xt_{n})$.

\begin{proposition}\thlabel{c-deg}
The pull-back to $s_0$ of an $\cX$-variable with principal coefficients 
\eqn{\mu_{s_0}^*\lrp{\Xt_{i;\cham}}\in R(\Xt_{1},\dots,\Xt_{n})} 
is homogeneous with respect to the standard $\Z^n$-grading. 
Its degree is the $\bc$-vector of $X_{i;\cham}$ with respect to $s_0$:
\[
\deg\lrp{\mu_{s_0}^*\lrp{\Xt_{i;\cham}}}= \bc_{i;\cham}.
\]
\end{proposition}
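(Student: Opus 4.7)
The plan is to apply the separation formula (\thref{separation_formulas}) specialized to the principal coefficients case, where the underlying semifield is the tropical semifield $\PP = \text{Trop}(t_1,\dots,t_n)$. Using the dictionary between the algebraic and geometric frameworks (\thref{rem:notation}, \thref{dictionary_2}, \thref{tilde_extension}), in particular the identification $b^v_{ij} = \epsilon_{ji}$ for the initial exchange matrix, the separation formula yields
\begin{equation*}
\mu_{s_0}^*(\Xt_{i;\cham}) = \left(\prod_k \lrp{F_{k;\cham}^{-\epsilon_{ik}}}^{\PP}(t_1,\dots,t_n)\right) \left(\prod_k F_{k;\cham}^{\epsilon_{ik}}(t_1X_1,\dots,t_nX_n)\right) \mathbf{X}^{\bc_{i;\cham}},
\end{equation*}
where $F_{k;\cham}$ denotes the $F$-polynomial associated to $k$ and $\cham$, and $\epsilon_{ik}$ are the entries of $\boldsymbol{\varepsilon}_\cham$. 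This exhibits $\mu_{s_0}^*(\Xt_{i;\cham})$ as an element of $R(\Xt_1,\dots,\Xt_n)$.

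Next I would analyze the standard $\Z^n$-degree of each of the three factors separately. For the first factor, every $F$-polynomial lies in $\Z_{\geq 0}[t_1,\dots,t_n]$ with constant term $1$ (a standard property of $F$-polynomials, see \cite{FZ_clustersIV}); in the tropical semifield $\text{Trop}(t_1,\dots,t_n)$, where addition is replaced by componentwise minimum of exponents, such a polynomial evaluates to the identity $1$. Consequently every tropical factor is trivial, and the first factor contributes degree $0$. For the middle factor, each product $t_jX_j$ has standard degree $\deg(t_j)+\deg(X_j)=-e_j+e_j=0$, so $F_{k;\cham}(t_1X_1,\dots,t_nX_n)$ is a sum of degree-$0$ monomials, hence homogeneous of degree $0$; the whole middle factor therefore has degree $0$. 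Finally, $\mathbf{X}^{\bc_{i;\cham}}$ is homogeneous of degree $\bc_{i;\cham}$ by definition. Summing these contributions gives $\deg(\mu_{s_0}^*(\Xt_{i;\cham}))=\bc_{i;\cham}$, as required.

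The main technical point is keeping straight the transpose relation $B = \boldsymbol{\varepsilon}^T$ when translating the algebraic separation formula into the geometric language. A fully self-contained alternative would proceed by induction on the distance in $\orT$ from $s_0$ to $\cham$, using the mutation formula \eqref{eq:mutfamily} together with the known mutation rule for $\bc$-vectors (itself a consequence of tropical duality, \thref{th:tropical_duality}), with a short case analysis driven by the sign coherence of $\bc_{k;\cham}$ (\thref{c_sing-coherence}). The separation-formula route is more conceptual and directly reveals the full structure of the extension.
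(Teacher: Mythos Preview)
Your proof is correct and follows essentially the same route as the paper: both use a separation formula and the observation that each product $t_jX_j$ has degree $0$. The paper works with the simpler \thref{separation}, writing $\mu_{s_0}^*(\Xt_{i;\cham}) = Y_{i;\cham}(t_1X_1,\dots,t_nX_n)\,/\,\bt^{\bc_{i;\cham}}$ and reading off the degree directly from the denominator; you instead invoke the finer \thref{separation_formulas}, which costs you an extra step (the tropical evaluation of the $F$-polynomials) but yields the same conclusion.
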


\begin{proof}
This follows from \thref{separation} by specializing to principal coefficients.
We have
\[
p_{i;\cham}=\prod_{j=1}^n p_j^{c_{ji;\cham}} = \bt^{\bc_{i;\cham}}. %\bt^{\cv_{s_0}\lrp{X_{j;\cham}}}.
\]
Consider the expression $Y_{i;\cham}\lrp{p_1 y_1,\dots,p_n y_n}$ in the numerator of \eqref{general_separation_formula}, with ${y_i =X_i}$ and $p_i=t_i$.
Since by definition we have $\deg(t_i X_{i})=0$ for all $i$, we deduce that ${\deg(Y_{i;\cham}\lrp{t_1X_1,\dots,t_nX_n})=0}$.
Then as by \eqref{general_separation_formula}
\eqn{
    \mu_{s_0}^*\lrp{\Xt_{i;\cham}}=\frac{Y_{i;\cham}(t_1 X_1,\dots,t_n X_n)}{\bt^{\bc_{i;\cham}}},
}
it follows that $\deg(\mu_{s_0}^*(\Xt_{i;\cham}))= -\deg\lrp{\bt^{\bc_{i;\cham}}} = \bc_{i;\cham} $.
\end{proof}

On account of \thref{c-deg}, it is natural to ask whether there is any special relationship between $\mu_{s_0}^*(\Xt_{i;\cham})$ and
$\vb{\Xt}^{\cv_{i;\cham}} := {\Xt_{1}}^{c_{i_1;\cham}}\cdots {\Xt_{n}}^{c_{i_n;\cham}}$. 
The answer is {\it{yes}}, and the following proposition makes this relationship precise.

\begin{proposition}
\thlabel{tto0}
The pull-back $\mu_{s_0}^*(\Xt_{i;\cham})$ of an $\cX$-variable with principal coefficients satisfies
\eq{
\lim_{\vb{t}\to 0} \mu_{s_0}^*\lrp{\Xt_{i;\cham}} =\vb{\Xt}^{\cv_{i;\cham}}.
}{eq:tto0}
Moreover, $\mu_{s_0}^*(\Xt_{i;\cham})$ is the unique homogenization of degree $\cv_{i;\cham}$ of the coefficient-free expression  $\mu_{s_0}^*(\Xt_{i;\cham})|_{\vb{t} = \vb{1} }$ satisfying \eqref{eq:tto0}.
\end{proposition}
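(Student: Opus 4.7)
The plan is to specialize the separation formula of \thref{separation} to the principal-coefficient setting and then combine it with the classical Fomin--Zelevinsky product formula for $Y$-variables. In the principal-coefficient case, where $\PP=\text{Trop}(t_1,\dots,t_n)$, applying the universal property to $\Q_{\text{sf}}(y_1,\dots,y_n)\to \PP$ sending $y_j\mapsto t_j$ identifies $p_{i;v}$ with the monomial $\vb{t}^{\cv_{i;\cham}}$ (this is the very definition of $\bc$-vectors). Writing $y_j=\Xt_j$, \thref{separation} then yields immediately
\[
\mu_{s_0}^*(\Xt_{i;\cham}) \;=\; \widetilde{Y}_{i;v} \;=\; \frac{Y_{i;v}(t_1\Xt_{1},\dots,t_n\Xt_{n})}{\vb{t}^{\cv_{i;\cham}}},
\]
where $Y_{i;v}$ is the coefficient-free $Y$-rational function of \eqref{Y-rational function}.

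The next step is to apply the product formula of \cite[Proposition~3.13]{FZ_clustersIV},
\[
Y_{i;v}(y_1,\dots,y_n) \;=\; \vb{y}^{\cv_{i;\cham}}\prod_{j=1}^n F_{j;v}(y_1,\dots,y_n)^{b^v_{ji}},
\]
and substitute $y_j = t_j \Xt_j$. The resulting monomial factor $(\vb{t}\vb{\Xt})^{\cv_{i;\cham}}$ cancels the denominator $\vb{t}^{\cv_{i;\cham}}$ exactly, leaving the clean expression
\[
\mu_{s_0}^*(\Xt_{i;\cham}) \;=\; \vb{\Xt}^{\cv_{i;\cham}}\prod_{j=1}^n F_{j;v}(t_1 \Xt_1,\dots,t_n \Xt_n)^{b^v_{ji}}.
\]
Because every $F$-polynomial has constant term $1$ (\cite[Proposition~5.6]{FZ_clustersIV}), sending $\vb{t}\to 0$ sends each factor on the right to $1^{b_{ji}^v}=1$, which establishes the limit formula \eqref{eq:tto0}.

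For uniqueness, I would exploit the $(\C^*)^n$-action $(t_i,\Xt_i)\mapsto (\lambda_i^{-1}t_i,\lambda_i \Xt_i)$ encoding the standard grading. Let $\tilde f(\vb{t},\vb{\Xt})$ be any homogeneous rational function of degree $\cv_{i;\cham}$ with $\tilde f(\vb{1},\vb{\Xt}) = f(\vb{\Xt})$, where $f := \mu_{s_0}^*(\Xt_{i;\cham})|_{\vb{t}=\vb{1}}$. Homogeneity gives $\tilde f(\lambda^{-1}\vb{t},\lambda\vb{\Xt})=\lambda^{\cv_{i;\cham}}\tilde f(\vb{t},\vb{\Xt})$; specializing at $\lambda_i=t_i$ forces
\[
\tilde f(\vb{t},\vb{\Xt}) \;=\; \vb{t}^{-\cv_{i;\cham}}\, f(t_1 \Xt_1,\dots,t_n \Xt_n),
\]
so $\tilde f$ is determined uniquely by $f$ together with the target degree (as an identity of rational functions, it holds on the dense open where all $t_i\neq 0$ and extends uniquely). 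The factorization from the second paragraph shows this unique $\tilde f$ coincides with $\mu_{s_0}^*(\Xt_{i;\cham})$, and by the first part of the proposition this $\tilde f$ indeed satisfies \eqref{eq:tto0}.

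I anticipate the main subtlety to be notational bookkeeping rather than substantive: carefully tracking the dictionary between algebraic and geometric conventions (\thref{rem:notation}, \thref{dictionary}, \thref{dictionary_2}), especially the identification $Y_{i;v}=X_{i;\cham}$ and the orientation conventions for $\bc$-vectors relative to the initial data $s_0$. Once the conventions are aligned, the argument is structural: the entire mathematical content is packaged in \thref{separation} and the classical fact that $F$-polynomials are honest polynomials with constant term $1$.
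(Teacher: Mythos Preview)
Your proof is correct and takes a genuinely different route from the paper's for both parts of the proposition.

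For the limit \eqref{eq:tto0}, the paper argues by direct induction on mutation: it checks the base case $\cham=\cham_0$ and then, for adjacent cones $\cham$ and $\cham'$, applies the explicit mutation formula \eqref{eq:mutfamily} together with a case analysis on $\sgn(\epsilon_{ik})$ versus $\sgn(\cv_{k;\cham})$ to propagate the claim. Your argument instead combines \thref{separation} with \cite[Proposition~3.13]{FZ_clustersIV} to obtain the closed formula
\[
\mu_{s_0}^*(\Xt_{i;\cham}) = \vb{\Xt}^{\cv_{i;\cham}}\prod_{j} F_{j;v}(t_1\Xt_1,\dots,t_n\Xt_n)^{b_{ji}^v},
\]
after which the limit is immediate from $F_{j;v}(0,\dots,0)=1$. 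This is more structural and avoids the case analysis entirely; the price is that it imports the fact that $F$-polynomials have constant term $1$, which in \cite{FZ_clustersIV} is conditional on sign-coherence of $\cv$-vectors (now \thref{c_sing-coherence}). The paper's inductive argument uses sign-coherence as well, so there is no real difference in logical strength.

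For uniqueness, the paper gives a path-based argument: it analyzes limits $\lim_{\vb{t}\stackrel{\gamma}{\to}0}$ along various curves $\gamma$ and shows that if two distinct monomials of the numerator (or denominator) survived without a nontrivial $\vb{t}$-coefficient, some path would produce a non-monomial limit. Your homogeneity argument is considerably cleaner: the identity $\tilde f(\vb{t},\vb{\Xt}) = \vb{t}^{-\cv_{i;\cham}} f(\vb{t}\vb{\Xt})$, obtained by specializing $\lambda=\vb{t}$ in the torus action, shows that \emph{any} homogeneous rational function of degree $\cv_{i;\cham}$ with the prescribed value at $\vb{t}=\vb{1}$ is already determined, regardless of the limit condition. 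This is strictly stronger than what the paper proves and makes the limit condition in the uniqueness clause redundant.
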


\begin{proof}
For $\cham=\cham_0$, \eqref{eq:tto0} is clear.
Suppose now that it holds for some arbitrary $\gv$-cone $\cham$.
We show that it must hold for all $\gv$-cones $\cham'$ sharing a facet with $\cham$ as well.
Take $\cham'$ to be related to $\cham$ by mutation in direction $k$.
Using \eqref{eq:mutfamily}, we can express
$\Xt_{i;\cham'}$ in terms of $\Xt_{1;\cham},\dots, \Xt_{n;\cham}$, and then take the $\vb{t}\to 0$ limit.
First note that \eqref{eq:Y-pattern_mutation} gives us the following mutation formula for $\cv$-vectors:
\eq{\cv_i'= \begin{cases} 
-\cv_i & \text{ if } i=k\\
\cv_i + \sgn\lrp{\epsilon_{ik}} \lrb{\epsilon_{ik} \cv_k}_+ & \text{ if } i\neq k.
\end{cases} }{eq:mutcv}
For $i=k$ we have
\eqn{
\lim_{\vb{t}\to 0} \mu_{s_0}^*\lrp{\Xt_{k;\cham'}}
&=\lim_{\vb{t}\to 0} \mu_{s_0}^*\lrp{\mu_k^*\lrp{\Xt_{k;\cham'}}}
=\lim_{\vb{t}\to 0} \mu_{s_0}^*\lrp{\Xt_{k;\cham}^{-1}}
=\vb{\Xt}^{-\cv_{k;\cham}}
=\vb{\Xt}^{\cv_{k;\cham'}}.
}
Now take $i\neq k$. 
First suppose $\epsilon_{ik}$ and $\cv_{k;\cham}$ have the same sign.
Then
\eqn{
\lim_{\vb{t}\to 0} \mu_{s_0}^*\lrp{\Xt_{i;\cham'}}
&=\lim_{\vb{t}\to 0} \mu_{s_0}^*\lrp{\mu_k^*\lrp{\Xt_{i;\cham'}}}\\
&=\lim_{\vb{t}\to 0} \mu_{s_0}^*\lrp{ 
\Xt_{i;\cham}\lrp{\vb{t}^{\lrm{\cv_{k;\cham}}}+\Xt_{k;\cham}^{-\sgn(\epsilon_{ik})}}^{-\epsilon_{ik}}
}
= \vb{\Xt}^{\cv_{i;\cham} + \lrm{\epsilon_{ik}} \cv_{k;\cham} } = \vb{\Xt}^{\cv_{i;\cham'}}.
}
If $\epsilon_{ik}$ and $\cv_{k;\cham}$ have opposite signs, we have
\eqn{
\lim_{\vb{t}\to 0} \mu_{s_0}^*\lrp{\Xt_{i;\cham'}}
&=\lim_{\vb{t}\to 0} \mu_{s_0}^*\lrp{\mu_k^*\lrp{\Xt_{i;\cham'}}}\\
&=\lim_{\vb{t}\to 0} \mu_{s_0}^*\lrp{ 
\Xt_{i;\cham}\lrp{1+\vb{t}^{\lrm{\cv_{k;\cham}}}\Xt_{k;\cham}^{-\sgn(\epsilon_{ik})}}^{-\epsilon_{ik}}
}
= \vb{\Xt}^{\cv_{i;\cham} } 
= \vb{\Xt}^{\cv_{i;\cham'}}.
}
This establishes \eqref{eq:tto0}.

We move on to uniqueness.
The coefficient-free expression $\mu_{s_0}^*(\Xt_{i;\cham})|_{\vb{t} = \vb{1} }$
is a rational function in $\C(\Xt_1, \dots,\Xt_n)$.
Write
\eqn{
\left.\mu_{s_0}^*\lrp{\Xt_{i;\cham}}\right|_{\vb{t} = \vb{1} } = \text{\small{ $  \frac{f(\Xt_1, \dots,\Xt_n)}{g(\Xt_1, \dots,\Xt_n)}=:\frac{f(\vb{\Xt})}{g(\vb{\Xt})}.$ }}
}
Assume there are two homogenizations of $\mu_{s_0}^*(\Xt_{i;\cham})|_{\vb{t} = \vb{1} }$ such that \eqref{eq:tto0} holds. 
The key observation is that if \eqref{eq:tto0} holds for some homogenization,
then for this homogenization there {\emph{must}} be exactly one monomial in each $f$ and $g$ that does not pick up a (non-trivial) $\bt^{\vb{n}}$ coefficient\footnote{Assuming of course that the homogenizations of $f$ and $g$ are relatively prime.}.
The result follows immediately from here.

For the observation, first suppose 
$\frac{f(\vb{\Xt},\bt)}{g(\vb{\Xt},\bt)}\in R(\Xt_1,\dots,\Xt_n)$
is some arbitrary rational function in $\Xt_i$'s and $t_i$'s.
Given a path $\gamma: \lrb{0,1} \to \C^n$ with $\gamma(1) = 0$, the limit
\eqn{
\lim_{\bt \stackrel{\gamma}{\to} 0}\ \text{\small{$ \frac{f(\vb{\Xt},\bt)}{g(\vb{\Xt},\bt)}$}}
} 
picks out the summands of $f$ and $g$ that vanish most slowly along $\gamma$.
The result, if it exists, is a new rational function whose numerator is a partial sum coming from $f$ and denominator is a partial sum coming from $g$.
If this limit is a non-zero Laurent monomial, then the partial sums contain exactly one term.
We are interested in the case where this limit is the monomial ${\vb{\Xt}}^{\bc_{i;\cham}}$ for every such $\gamma$.
Take $\gamma$ and $\gamma'$ with 
\eqn{
\lim_{\bt \stackrel{\gamma, \gamma'}{\to} 0}\ \text{\small{ $ \frac{f(\vb{\Xt},\bt)}{g(\vb{\Xt},\bt)} $ }} = \vb{\Xt}^{\bc_{i;\cham}}.
} 
Associated to $\gamma$ we have the summands ${\vb{\Xt}}^{{\vb{n}_f}}$ of $f$ and  ${\vb{\Xt}}^{{\vb{n}_g}}$ of $g$, with ${\vb{n}_f}$, ${\vb{n}_g}$ $\in \Z_{\geq 0}^n$ and
\eqn{
\text{\small{ $ \frac{{\vb{\Xt}}^{{\vb{n}_f}}}{{\vb{\Xt}}^{{\vb{n}_g}}}$ }} = {\vb{\Xt}}^{\bc_{i;\cham}}.
}
We similarly associate ${\vb{\Xt}}^{\vb{n}_f'}$ and ${\vb{\Xt}}^{\vb{n}_g'}$ to $\gamma'$.
There are two cases to consider: either ${\vb{\Xt}}^{\vb{n}_f}$ divides ${\vb{\Xt}}^{\vb{n}_f'}$, respectively vice versa, or neither divides the other. 
Suppose ${\vb{\Xt}}^{\vb{n}_f}$ divides ${\vb{\Xt}}^{\vb{n}_f'}$.
Then to homogenize $f$, we have to multiply ${\vb{\Xt}}^{\vb{n}_f'}$ by the coefficient  $\bt^{\vb{n}_f'-\vb{n}_f}$.
In this case, assuming ${\vb{n}_f'-\vb{n}_f \neq 0}$,
there cannot exist a path along which the summand ${\vb{\Xt}}^{\vb{n}_f}$ vanishes more quickly than the summand ${\vb{\Xt}}^{\vb{n}_f'}$.
So we must have ${\vb{\Xt}}^{\vb{n}_f}={\vb{\Xt}}^{\vb{n}_f'}$, and hence ${\vb{\Xt}}^{\vb{n}_g}={\vb{\Xt}}^{\vb{n}_g'}$ as well.

On the other hand, suppose neither ${\vb{\Xt}}^{\vb{n}_f}$ nor ${\vb{\Xt}}^{\vb{n}_f'}$ divides the other.
Let $\vb{n}_f = \lrp{n_{f;1}, \dots, n_{f;n}}$
and $\vb{n}_f' = (n_{f;1}', \dots, n_{f;n}')$.
Set $\vb{d} = \sum_i [n_{f;i} - n_{f;i}']_+ e_i$
and $\vb{d}' = \sum_i [n_{f;i}' - n_{f;i}]_+ e_i$.
Both are non-zero.
In homogenizing $f$, ${\vb{\Xt}}^{\vb{n}_f}$ must obtain a coefficient of (at least) $\bt^{\vb{d}}$ and 
${\vb{\Xt}}^{\vb{n}_f'}$ a coefficient of $\bt^{\vb{d}'}$.   
Any additional factors of $\bt$ the two summands obtain must match.
Therefore, ${\vb{\Xt}}^{\vb{n}_g}$
and 
${\vb{\Xt}}^{\vb{n}_g'}$
must obtain these coefficients as well.
Along any path $\gamma''$ contained in the hyperplane $\bt^{\vb{d}} = \bt^{\vb{d}'}$,
we will have 
\eqn{
\lim_{\bt \stackrel{\gamma''}{\to} 0}\  \text{\small{ $  \frac{ \bt^{\vb{d}} {\vb{\Xt}}^{\vb{n}_f} + \bt^{\vb{d}'} {\vb{\Xt}}^{\vb{n}_f'}}{\bt^{\vb{d}} {\vb{\Xt}}^{\vb{n}_g} + \bt^{\vb{d}'} {\vb{\Xt}}^{\vb{n}_g'}} = \frac{ {\vb{\Xt}}^{\vb{n}_f} + {\vb{\Xt}}^{\vb{n}_f'}}{{\vb{\Xt}}^{\vb{n}_g} +  {\vb{\Xt}}^{\vb{n}_g'}}.$ }}
} 
Perhaps after considering additional summands (in the case that some other summand vanishes more quickly along $\gamma''$),
we find that the limit along some path is not a Laurent monomial.
This contradicts our assumption and proves the claim.
\end{proof}

\begin{remark}\thlabel{homomon}
The above proposition tells us we have a canonical homogenization of cluster variables.
Note that this in turn gives a homogenization of all cluster monomials.
\end{remark}

We are now prepared to describe toric degenerations of $\Xsp$.

\begin{definition} \thlabel{def:Xfam}
We define $\Xfsps{\cham_0}$ to be the scheme over $R$ with affine patches 
\eqn{U_\cham:=\Spec\lrp{R \lrb{\Xt_{1;\cham},\dots,\Xt_{n;\cham}}}} 
and gluing given by \eqref{eq:mutfamily}.
By analogy, we denote by $\Xfams{\cham_0}$ the scheme obtained by instead gluing $U_\cham^{\pm}:=\Spec( R [\Xt_{1;\cham}{}^{\pm 1},\dots,\Xt_{n;\cham}{}^{\pm 1}]) $.
Recall from \thref{rem:ConesSeeds} that the schemes $\Xfams{\cham_0}$ and $\Xfam_{s_0}$ are isomorphic.
\end{definition}

The open subschemes along which $U_\cham$ and $U_{\cham'}$ are glued have a very similar description to those discussed below \thref{special}.
To simplify notation, let $\bc_k=\bc_{k;\cham}$ and $\bc_{k}'=\bc_{k;\cham'}$ for two adjacent $\gv$-cones $\cham$ and $\cham'$ sharing a facet contained in $\cv_k^\perp$.
On the level of rings we set
\eqn{
A_\cham = R\lrb{\Xt_{k;\cham}^{\pm 1}, \Xt_{i;\cham}, \lrp{ \bt^{\lrb{\sgn\lrp{\epsilon_{ik}} \cv_k}_+} + \bt^{\lrb{-\sgn\lrp{\epsilon_{ik}} \cv_k}_+ }\Xt_{k;\cham}^{-\sgn\lrp{\epsilon_{ik}} }}^{-1} }_{i\neq k}.
}
Note that 
$\sgn(\epsilon_{ik}) \cv_k = \sgn(\epsilon'_{ik}) \cv_k'$ 
and
$ \mu_k^*( \Xt_{k;\cham'}^{ -\sgn (\epsilon'_{ik})  } )= \Xt_{k;\cham}^{ -\sgn \lrp{\epsilon_{ik}}  }  .$
Then applying $\mu_k^*$ to generators of $A_{\cham'}$ yields
\eq{\mu_k^*\lrp{ \Xt_{k;\cham'}^{\mp 1} }= \Xt_{k;\cham}^{\pm 1},}{eq:muXk}
\eq{\mu_k^*\lrp{\Xt_{i;\cham'}} = \Xt_{i;\cham} \lrp{ \bt^{\lrb{\sgn\lrp{\epsilon_{ik}} \cv_k}_+} + \bt^{\lrb{-\sgn\lrp{\epsilon_{ik}} \cv_k}_+ }\Xt_{k;\cham}^{-\sgn\lrp{\epsilon_{ik}} }}^{-\epsilon_{ik}},}{eq:muXi}
\begin{eqnarray}\label{eq:muStuff}
    &\mu_k^*\lrp{ \bt^{[\sgn(\epsilon'_{ik}) \cv_k']_+} + \bt^{[-\sgn(\epsilon'_{ik}) \cv_k']_+ }\Xt_{k;\cham'}^{-\sgn(\epsilon'_{ik}) }}^{-1}&     \\
\nonumber    = &\lrp{ \bt^{\lrb{\sgn\lrp{\epsilon_{ik}} \cv_k}_+} + \bt^{\lrb{-\sgn\lrp{\epsilon_{ik}} \cv_k}_+ }\Xt_{k;\cham}^{-\sgn\lrp{\epsilon_{ik}} }}^{-1}.& 
\end{eqnarray}
Only \eqref{eq:muXi} needs further consideration.
If $-\epsilon_{ik} > 0 $, we can use \eqref{eq:muStuff} to recover $\Xt_{i;\cham}$ from \eqref{eq:muXi}.
Meanwhile, if $-\epsilon_{ik} < 0$, the term within parenthesis is in $R[X_{k;\cham}^{\pm 1}]$,
so we can use \eqref{eq:muXk} to recover \eqref{eq:muXi}.
On the other hand, the expressions on the right hand side of \eqref{eq:muXk}, \eqref{eq:muXi} 
and \eqref{eq:muStuff} are all in $A_\cham$, so $\mu_k^*$ defines an isomorphism of $A_\cham$ and $A_{\cham'}$.
Geometrically, we are gluing the subschemes
\eq{
U_\cham \setminus \lrp{ \lrc{\Xt_{k;\cham} = 0 } \bigcup \lrc{ \bt^{\lrb{\sgn\lrp{\epsilon_{ik}} \cv_k}_+} + \bt^{\lrb{-\sgn\lrp{\epsilon_{ik}} \cv_k}_+ }\Xt_{k;\cham}^{-\sgn\lrp{\epsilon_{ik}} } = 0 }_{i\neq k} },
}{eq:FamilyOverlap}
\eq{
U_{\cham'} \setminus \lrp{ \lrc{\Xt_{k;\cham'} = 0 } \bigcup \lrc{ \bt^{[\sgn(\epsilon'_{ik}) \cv'_k]_+} + \bt^{[-\sgn(\epsilon'_{ik}) \cv'_k]_+ }\Xt_{k;\cham'}^{-\sgn(\epsilon'_{ik}) } = 0 }_{i\neq k}}.
}{eq:FamilyOverlapPrime}

To see that \thref{def:Xfam} makes sense, we also need to verify that the cocycle condition holds.
But this follows immediately from \thref{periodicity}.
Now observe that the fiber over 1 is precisely $\Xsp$, and the restriction of $\Xt_{i;\cham}$ to this fiber is $X_{i;\cham}$.

\begin{proposition}\thlabel{flat}
The family $\pi:\Xfsps{\cham_0} \to \Spec\lrp{R}$ is a flat family.
\end{proposition}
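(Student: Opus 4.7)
The plan is to reduce to a local check and exploit the fact that each affine patch of $\widehat{\mathscr{X}}_{\mathcal{G}_0}$ is a polynomial ring over $R$.

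First I would recall that flatness is a local property on the source: a morphism of schemes $\pi: Y \to Z$ is flat if and only if for every point $y \in Y$ there exists an affine open neighborhood $\Spec(A) \subset Y$ containing $y$ and an affine open $\Spec(B) \subset Z$ containing $\pi(y)$ with $\pi(\Spec(A)) \subset \Spec(B)$, such that the induced ring map $B \to A$ makes $A$ a flat $B$-module. Since $\widehat{\mathscr{X}}_{\mathcal{G}_0}$ is by construction covered by the affine opens $U_\cham = \Spec(R[\widetilde{X}_{1;\cham}, \dots, \widetilde{X}_{n;\cham}])$ (one for each maximal cone $\cham$ of the $\gv$-fan), and $\pi$ restricts on each $U_\cham$ to the structure morphism coming from the canonical inclusion $R \hookrightarrow R[\widetilde{X}_{1;\cham}, \dots, \widetilde{X}_{n;\cham}]$, it suffices to verify flatness patch by patch.

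Next, on each patch the morphism corresponds to the inclusion of $R$ into the polynomial ring $R[\widetilde{X}_{1;\cham}, \dots, \widetilde{X}_{n;\cham}]$. A polynomial ring over any commutative ring $R$ is free as an $R$-module, with basis the set of monomials in the variables. Free modules are flat, so the morphism $\pi|_{U_\cham}: U_\cham \to \Spec(R)$ is flat for every maximal cone $\cham$ of the $\gv$-fan.

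Finally, since flatness on each member of an open cover implies flatness globally, I conclude that $\pi: \widehat{\mathscr{X}}_{\mathcal{G}_0} \to \Spec(R)$ is flat. The one subtlety to mention is that the gluing data from \eqref{eq:mutfamily} is compatible with the projection to $\Spec(R)$ in the sense that each mutation is an $R$-algebra isomorphism on overlaps (as is evident from the fact that the coefficients $t_i$ are fixed under pullback by $\mu_k^*$, only the cluster variables are transformed), so the patches assemble into a genuine $R$-scheme with the stated projection. No serious obstacle is anticipated; the proof is essentially immediate once one recognizes that each patch is a polynomial algebra over $R$.
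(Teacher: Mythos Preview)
Your proof is correct and follows essentially the same approach as the paper: flatness is local, and each affine patch is a polynomial ring over $R$, hence flat. The paper's proof is slightly terser (it does not spell out that polynomial rings are free modules or mention the compatibility of gluing), but the argument is the same.
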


\begin{proof}
This follows from the definition of $\Xfsps{\cham_0}$.
Flatness is a local property, so we are free to restrict to affine patches.
The induced map of rings $R \to R[\Xt_{1;\cham}, \dots, \Xt_{n;\cham}]$ is flat for every $\cham$, so $\Xfsps{\cham_0} \to \Spec\lrp{R}$ is a flat morphism of schemes.
\end{proof}

\begin{theorem}\thlabel{toric_degen}
The flat family $\Xfsps{\cham_0} \to \Spec\lrp{R}$ is a toric degeneration of $\Xsp$ to $\TV{\ccF}$.
\end{theorem}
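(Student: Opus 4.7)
The plan is to verify the three defining features of a toric degeneration in turn: flatness, identification of a generic fiber with $\Xsp$, and identification of the central fiber with $\TV{\ccF}$. Flatness is already in hand by \thref{flat}, and the fiber over $\vb{1}=(1,\dots,1)$ is manifestly $\Xsp$: the specialization $\bt=\vb{1}$ of the gluing formula \eqref{eq:mutfamily} recovers the standard $\cX$-mutation \eqref{eq:X_mut}, and the affine patches $U_\cham$ specialize to $\A^n_{M,\cham}$, which is exactly the description of $\Xsp$ from \thref{special}. So the real content is the $\bt=0$ fiber.

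Next I would compute the central fiber explicitly. Each patch degenerates to $U_\cham|_{\bt=0}=\Spec\lrp{\C\lrb{\Xt_{1;\cham},\dots,\Xt_{n;\cham}}}$, which is an affine space. Using sign-coherence of $\cv$-vectors (\thref{c_sing-coherence}), I can split \eqref{eq:mutfamily} into the two cases \eqref{eq:same sign} and \eqref{eq:opp sign}, and then set $\bt=0$. Because $|\bc_{k;\cham}|$ is a non-zero non-negative vector, the factor $\bt^{|\bc_{k;\cham}|}$ vanishes at $\bt=0$, and one obtains the monomial gluing
\eqn{
\mu_k^*\lrp{\Xt_{i;\cham'}}\big|_{\bt=0}=
\begin{cases}
\Xt_{k;\cham}^{-1} & i=k,\\
\Xt_{i;\cham}\Xt_{k;\cham}^{|\epsilon_{ik}|} & i\neq k,\ \sgn(\epsilon_{ik})\text{ and }\bc_{k;\cham}\text{ agree,}\\
\Xt_{i;\cham} & i\neq k,\ \sgn(\epsilon_{ik})\text{ and }\bc_{k;\cham}\text{ disagree.}
\end{cases}
}
Alternatively (and more cleanly), \thref{tto0} already tells us that $\mu_{s_0}^*\lrp{\Xt_{i;\cham}}|_{\bt=0}=\vb{\Xt}^{\cv_{i;\cham}}$, so in the central fiber every pulled-back variable becomes the monomial with exponent $\cv_{i;\cham}$.

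The heart of the argument is then matching this central fiber with the toric description of $\TV{\ccF}$ from \S\ref{sec:fan_toric}. For every maximal cone $\cham\in\ccF$, the monoid $\chamdual(\Z)=S_\cham$ is freely generated by the $\cv$-vectors $\cv_{1;\cham},\dots,\cv_{n;\cham}$ (by \thref{g-fan}, since each $\cham$ is simplicial with dual basis $\cv_{1;\cham},\dots,\cv_{n;\cham}$). Hence there is a canonical isomorphism
\eqn{
\Phi_\cham\colon U_\cham|_{\bt=0} \xrightarrow{\ \sim\ } \Spec\lrp{\C\lrb{S_\cham}}, \qquad \Xt_{i;\cham}\longmapsto z^{\cv_{i;\cham}}.
}
I would then check that these $\Phi_\cham$ are compatible with gluing: comparing the central-fiber mutation formula above with the mutation of $\cv$-vectors \eqref{eq:mutcv}, and using sign-coherence to see that the two formulas land in precisely the same cases, one checks $\Phi_{\cham'}\circ\mu_k|_{\bt=0} = \mu_k^{\mathrm{tor}}\circ\Phi_\cham$, where $\mu_k^{\mathrm{tor}}$ is the birational identification induced by inclusion of the common facet $\cham\cap\cham'\subset \ccF$. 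Since $\cv_{i;\cham_0}=e_i$ with $\chamdual_0=\R^n_{\geq 0}$, the initial patch is the standard affine patch for the positive orthant, establishing the last clause of the statement.

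The main obstacle I anticipate is bookkeeping the sign-case analysis cleanly: one needs to confirm that the \textbf{same} dichotomy controls both the degenerate mutation of variables and the mutation of $\cv$-vectors, so that $\Phi_\cham$ really does intertwine them on both sides. This is exactly where sign-coherence of $\cv$-vectors is essential, since without it the exponent $[\sgn(\epsilon_{ik})\cv_{k;\cham}]_+$ in \eqref{eq:mutfamily} would not collapse neatly into a single vanishing term at $\bt=0$. Finally, I would verify that the open subschemes \eqref{eq:FamilyOverlap} and \eqref{eq:FamilyOverlapPrime} along which we glue restrict at $\bt=0$ to the correct toric overlap $U_\cham\setminus\{\Xt_{k;\cham}=0\}$, which under $\Phi_\cham$ is the standard toric localization $\lrp{U_\cham}_{z^{\cv_{k;\cham}}}=U_{\cham\cap\cham'}$, so the gluing is compatible on overlaps and $\Xfsps{\cham_0}|_{\bt=0}\cong\TV{\ccF}$.
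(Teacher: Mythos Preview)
Your proposal is correct and follows essentially the same approach as the paper: after noting flatness and the fiber over $\vb{1}$, you reduce to checking that at $\bt=0$ the gluing overlaps specialize to the toric localizations $\{\Xt_{k;\cham}\neq 0\}$ and that the transition functions become the monomial maps dictated by the $\cv$-vectors, invoking \thref{tto0} and sign-coherence exactly as the paper does. Your explicit definition of $\Phi_\cham$ and the case-by-case verification via \eqref{eq:mutcv} are slightly more detailed than the paper's treatment, but the logic is identical.
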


\begin{proof}
We have already noted that the fiber over $1$ is $\Xsp$ and shown flatness. 
We only need to check that the central fiber $\Xsp_0$ is indeed $\TV{\ccF}$.
Clearly, we can identify the affine patches of $\Xsp_0$  associated to maximal cones of $\ccF$ with
those of $\TV{\ccF}$; in both cases we have a copy of $\A^n$ for every maximal cone of $\ccF$.
We just need to demonstrate that the {\emph{gluing}} of these patches in $\Xsp_0$ coincides with the gluing in $\TV{\ccF}$.
This amounts to verifying two things:
\begin{enumerate}[(i)]
    \item \label{overlap} the overlap of neighboring affine patches in $\Xsp_0$ and $\TV{\ccF}$ agree;
    \item \label{functions} the transition functions on these overlaps agree.
\end{enumerate}

For $\TV{\ccF}$, $S_{\cham}$ is the monoid of integral points in the dual cone $\chamdual$.
The non-identity generators of this monoid are precisely $\bc_{1;\cham},\dots,\bc_{n;\cham} $, % $\cv\lrp{X_{1;s}}, \dots, \cv\lrp{X_{n;s}}$, 
with $\cv_{k;\cham}$ normal to the facet of $\cham$ spanned by 
$\lrc{\gv_{i;\cham}}_{i\neq k}$.
If $\cham'$ shares this facet with $\cham$, 
then in $\TV{\ccF}$ the subscheme of $\A^n_{\cham}$ overlapping $\A^n_{\cham'}$ is the localization $\lrp{\A^n_{\cham}}_{\Xt_{k;\cham}} = \A^n_\cham \setminus \{ \Xt_{k;\cham} = 0 \}$.

To establish (\ref{overlap}), we need to see that specializing $\bt=0$ in \eqref{eq:FamilyOverlap}
results in precisely the same subset of $\A^n_\cham$ being removed.
The first piece removed in \eqref{eq:FamilyOverlap} is exactly $\{ \Xt_{k;\cham} = 0 \}$, 
so we have to show that nothing else is removed when $\bt = 0$.
When $\epsilon_{ik}$ and $\cv_k$ have the same sign,
\eqn{\bt^{\lrb{\sgn\lrp{\epsilon_{ik}}\bc_k}_+} + \bt^{\lrb{-\sgn\lrp{\epsilon_{ik}}\bc_k}_+}\Xt_{k;\cham}^{-\sgn\lrp{\epsilon_{ik}}}
=
\bt^{\lrm{\bc_k}}+\Xt_{k;\cham}^{-\sgn\lrp{\epsilon_{ik}}}.
}
For $\bt=0$ we eliminate the locus $\{\Xt_{k;\cham}^{-\sgn(\epsilon_{ik})} = 0 \}$.
If the exponent is positive, we have already removed this.
If it is negative, the locus is empty.
On the other hand, when $\epsilon_{ik}$ and $\cv_k$ have opposite signs,
\eqn{\bt^{\lrb{\sgn\lrp{\epsilon_{ik}}\bc_k}_+} + \bt^{\lrb{-\sgn\lrp{\epsilon_{ik}}\bc_k}_+}\Xt_{k;\cham}^{-\sgn\lrp{\epsilon_{ik}}}
=
1+ \bt^{\lrm{\bc_k}}\Xt_{k;\cham}^{-\sgn\lrp{\epsilon_{ik}}}.
}
When $\bt = 0 $ in this case, we eliminate the locus $\lrc{1=0}$, which is obviously empty.
This establishes (\ref{overlap}).

For \eqref{functions}, note that elements of $S_{\mathcal G}$ correspond to functions on $\TV{\ccF}$ that restrict to characters on the defining torus of $\TV{\ccF}$.
Specifically, they are the characters whose exponent vectors are integral points of $\mathcal C$.
On intersections, the relations between elements of $S_{\mathcal G}$ and $S_{\mathcal G'}$ are precisely the relations between their exponent vectors, living in $\chamdual$ and $\chamdual'$ respectively.
The first part of \thref{tto0} and \thref{homomon} show that the $\cX$-monomials on the patches $\A^n_\cham$ and $\A^n_{\cham'}$ of $\Xsp_0$ are related in the same way.
\end{proof}

\begin{remark}\thlabel{refine}
The key idea here is that by restricting to a domain of linearity in $\cX^\trop\lrp{\R}$ 
we find ourselves in the cozy world of toric varieties,
where relations between generators are as straightforward as vector addition.
When we cross a wall of the scattering diagram, 
we relate generators by mutation instead.
As we approach the central fiber, mutation relations revert back to the toric version where we simply add exponent vectors.
It is not necessary to insist that the patches we glue by mutation be copies of $\A^n$.
We can repeat the construction with different compactifications of the cluster tori.
If we consider a refinement $\Sigma$ of the $\gv$-fan, we can take our ``patches'' to be the toric varieties associated to subfans whose support is a domain of linearity.
This gives a different partial minimal model $\Xsp_\Sigma$ of $\cX$.
Mutation gives the gluing of these patches as before,
and the family is a toric degeneration of $\Xsp_\Sigma$ to $\TV{\Sigma}$. 
We will see examples of this in a sequel with M.-W. Cheung, with such compactifications and their toric degenerations arising as duals to compactified $\cA$-varieties and the $\cAp$-family.
\end{remark}

We can use the family $\Xfsps{\cham_0}$ to give a new notion of $\cv$-vector analogously to the definition of $\gv$-vectors given in \cite[Definition~5.6]{GHKK}.

\begin{definition}\thlabel{def:cvNew}
Let $f\in R(X_1,\dots,X_n)$ be a homogeneous rational function on $\Xfsps{\cham_0}$.
We define the {\it{$\cv$-vector of $f$ at $\cham_0$}} (or {\it{at $s_0$}}) by $\cv_{\cham_0}(f)=\cv_{s_0}(f) := \deg(f)$ and the {\it{$\cv$-vector at $\cham_0$ of a cluster monomial on $\cX$}} as the $\cv$-vector at $\cham_0$ of its extension to $\Xfsps{\cham_0}$. 
\end{definition}

We now turn our attention to the fibers of the family.

%%%%%%%%%%%%%%%%% PROP ISOM OF GENERIC FIBRES

\begin{proposition}\thlabel{iso of fibres}
For $\mathbf{u},\mathbf{u}'\in (\mathbb C^*)^n$ the fibers $\Xsp_{\mathbf u}$ and $\Xsp_{\mathbf u'}$ of the family ${\Xfsps{\cham_0} \to \Spec\lrp{R}}$ are isomorphic.
\end{proposition}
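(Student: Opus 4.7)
The plan is to exploit Proposition \thref{c-deg}. It tells us that each extension $\widetilde{X}_{i;\cham}$ is homogeneous of degree $\cv_{i;\cham}$ under the grading $\deg(X_j)=e_j$, $\deg(t_j)=-e_j$, and this homogeneity encodes a natural $T=(\mathbb{C}^*)^n$-action on $\Xfsps{\cham_0}$ covering the scaling action on the base $\Spec(R)$. Since the base action is simply transitive on $(\mathbb{C}^*)^n\subset\Spec(R)$, any two generic fibers are carried into one another by a suitable element of $T$.

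Concretely, for each $\lambda=(\lambda_1,\ldots,\lambda_n)\in T$ I would define an automorphism $\rho_\lambda\colon \Xfsps{\cham_0}\to \Xfsps{\cham_0}$ by declaring that on each patch $U_\cham$
\begin{equation*}
\rho_\lambda^*(t_i)=\lambda_i^{-1} t_i, \qquad \rho_\lambda^*(\widetilde{X}_{i;\cham})=\lambda^{\cv_{i;\cham}}\widetilde{X}_{i;\cham}.
\end{equation*}
On the initial chart this is just the rescaling $X_{i;\cham_0}\mapsto \lambda_i X_{i;\cham_0}$ and $t_j\mapsto \lambda_j^{-1} t_j$; the formula on a general chart is exactly what \thref{c-deg} forces after pulling back by $\mu_{s_0}^*$.

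Next I would verify that the chart-wise $\rho_\lambda$ glue, i.e. that $\rho_\lambda^*$ commutes with the mutation formula \eqref{eq:mutfamily}. Applying $\rho_\lambda^*$ to the right-hand side of \eqref{eq:mutfamily}, both summands inside the parenthesis pick up the common factor $\lambda^{-[\sgn(\epsilon_{ik})\cv_{k;\cham}]_+}$; for the second summand this uses the componentwise identity $[-y]_+=[y]_+-y$. Pulling this factor out and combining it with the leading $\lambda^{\cv_{i;\cham}}$ produces a prefactor
\begin{equation*}
\lambda^{\cv_{i;\cham}+\epsilon_{ik}[\sgn(\epsilon_{ik})\cv_{k;\cham}]_+}=\lambda^{\cv_{i;\cham}+\sgn(\epsilon_{ik})[\epsilon_{ik}\cv_{k;\cham}]_+}=\lambda^{\cv_{i;\cham'}},
\end{equation*}
where the last equality is the $\cv$-vector mutation rule \eqref{eq:mutcv}. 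This is precisely the scalar by which $\rho_\lambda^*$ multiplies the left-hand side $\mu_k^*(\widetilde{X}_{i;\cham'})$, so $\rho_\lambda$ descends to a well-defined morphism of $\Xfsps{\cham_0}$, with $\rho_{\lambda^{-1}}$ as inverse.

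Finally, $\rho_\lambda$ sits above the scaling $\mathbf{t}\mapsto \lambda^{-1}\mathbf{t}$ on $\Spec(R)$. For $\mathbf{u},\mathbf{u}'\in (\mathbb{C}^*)^n$, setting $\lambda_i:=u_i/u'_i$ therefore makes $\rho_\lambda$ restrict to an isomorphism $\Xsp_{\mathbf u}\xrightarrow{\sim}\Xsp_{\mathbf u'}$. The only real work is the compatibility computation of the third paragraph, but thanks to \thref{c-deg} and \eqref{eq:mutcv} it reduces to the short identity above, so no serious obstacle arises.
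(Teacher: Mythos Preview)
Your proof is correct and follows essentially the same strategy as the paper: rescale each $\Xt_{i;\cham}$ by $\lambda^{\cv_{i;\cham}}$ and verify compatibility with the mutation formula via the $\cv$-vector mutation rule \eqref{eq:mutcv}. The paper works directly with a fiber-to-fiber map and splits the verification into two cases according to whether $\sgn(\epsilon_{ik})$ agrees with the sign of $\cv_{k;\cham}$, whereas your packaging as a $T$-action on the total space together with the identity $[-y]_+=[y]_+-y$ lets you handle both cases at once; this is a mild streamlining but not a different idea.
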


\begin{proof}
For a given $\gv$-cone $\cham$ let $\left.U_\cham\right|_{\mathbf t=\mathbf u}$ denote by a little abuse of notation the affine patch $\Spec(\mathbb C[\Xt_{1;\cham},\dots,\Xt_{n;\cham}])$ of $\Xsp_{\mathbf u}$. 
Similarly, let $\left.U_{\cham}\right|_{\mathbf t=\mathbf u'}$ be  $\Spec(\mathbb C[\Xt'_{1;\cham},\dots,\Xt'_{n;\cham}])$ of $\Xsp_{\mathbf u'}$.
We define an isomorphism $\psi_\cham:U_{\cham}\vert_{\mathbf t=\mathbf u'}\to U_{\cham}\vert_{\mathbf t=\mathbf u}$ by the pullback on functions:
\begin{eqnarray*}
\psi^*_\cham: \mathbb C[\Xt_{1;\cham},\dots,\Xt_{n;\cham}] \to \mathbb C[\Xt'_{1;\cham},\dots,\Xt'_{n;\cham}],\ \ \text{ where } \ \
\Xt_{i;\cham} \mapsto \frac{\mathbf u'^{\cv_{i;\cham}}}{\mathbf u^{\cv_{i;\cham}}}\Xt'_{i;\cham} \ \forall i.
\end{eqnarray*}
Let $\cham'$ be the $\gv$-cone related to $\cham$ by mutation in direction $k$, i.e. sharing the facet in $\cv_k^\perp$. 
We have to show that this isomorphism commutes with the gluing given by $\mu_k$.
The transition functions on intersections of the affine patches in $\Xsp_{\mathbf u}$ are obtained by specializing $\mathbf t=\mathbf u$ in \eqref{eq:mutfamily} and similarly for $\Xsp_{\mathbf u'}$.
We show commutativity on the level of rings, i.e. $\psi_{\cham}^*\circ \mu_k^*(\Xt_{i;\cham'})=\mu_k^*\circ \psi^*_{\cham'}(\Xt_{i;\cham'})$ for every $i\in[1,n]$. 
For $i=k$ by \eqref{eq:mutfamily} and \eqref{eq:mutcv} we have 
\begin{eqnarray*}
{\psi_\cham^*}\circ \mu_k^*\lrp{{\Xt}_{k;\cham'}}
&=&
\psi_\cham^*\lrp{{\Xt}{}^{-1}_{k;\cham}} = \frac{\mathbf u^{\mathbf c_{k;\cham}}}{{\mathbf u'}{}^{\mathbf c_{k;\cham}}}{{\Xt}'{}^{-1}_{k;\cham}}\\
&=& {\frac{{\mathbf u'}^{\mathbf c_{k;\cham'}}}{\mathbf u^{\mathbf c_{k;\cham'}}}}{{\Xt}'{}^{-1}_{k;\cham}} = \mu_k^*\lrp{ \frac{{\mathbf u'}^{\mathbf c_{k;\cham'}}}{\mathbf u^{\mathbf c_{k;\cham'}}}{\Xt}'_{k;\cham'}}={\mu_k^*}\circ \psi^*_{\cham'}\lrp{\Xt_{k;\cham'}}.
\end{eqnarray*}
For $i\not =k$ we distinguish two cases:
\begin{itemize}
    \item[\bf Case 1:] If $\sgn(\epsilon_{ik})\not=\sgn(\mathbf c_{k;\cham})$, note that $\cv_{i;\cham'}=\cv_{i;\cham}+\sgn(\epsilon_{ik})[\epsilon_{ik}\cv_{k;\cham}]_+=\cv_{i;\cham}$, then
\begin{eqnarray*}
     \psi_\cham^{*} \circ \mu_k^{*} 
        \lrp{  
            \Xt_{i;\cham'} 
        } 
    &=& \psi_\cham^{*} 
        \lrp{
            \Xt_{i;\cham} 
            \lrp{
                1+\mathbf{u}^{|\cv_{k;\cham}|}\Xt_{k;\cham}^{-\sgn(\epsilon_{ik})}}^{-\epsilon_{ik}
            } 
        } \\
    &=& \frac{ {\mathbf{u}'}^{\cv_{i;\cham}} }{ \mathbf{u}^{\cv_{i;\cham}} }
        \Xt'_{i;\cham}
        \lrp{
            1+\mathbf{u}^{| \cv_{k;\cham} |}
                \lrp{
                    \frac{ {\mathbf{u}'}^{\cv_{k;\cham}} }{ \mathbf{u}^{\cv_{k;\cham}} }
                    \Xt'_{k;\cham}
                }^{-\sgn(\epsilon_{ik})}
        }^{-\epsilon_{ik}} \\
    & \stackrel{ |\cv_{k;\cham}|=-\sgn(\epsilon_{ik})\cv_{k;\cham} }{=} &
        \frac{ {\mathbf{u}'}^{\mathbf c_{i;\cham}}}{ \mathbf{u}^{\cv_{i;\cham}} }
        \Xt'_{i;\cham}
        \lrp{ 
            1+{\mathbf{u}'}^{| \cv_{k;\cham}|}{{\Xt}'_{k;\cham}}{}^{-\sgn(\epsilon_{ik})}
        }^{-\epsilon_{ik}} \\
    & \stackrel{ \cv_{i;\cham'}=\cv_{i;\cham} }{=} & 
        \frac{ {\mathbf{u}'}^{\cv_{i;\cham'}} }{ \mathbf{u}^{\cv_{i;\cham'}} }
        \Xt'_{i;\cham}
            \lrp{
                1+{\mathbf{u}'}^{ |\cv_{k;\cham}| } {\Xt'_{k;\cham}}{}^{ -\sgn(\epsilon_{ik}) }
            }^{-\epsilon_{ik}} \\
    &=& \mu_k^*
        \lrp{
            \frac{{\mathbf u'}^{\cv_{i;\cham'}}}{\mathbf u^{\cv_{i;\cham'}}}\Xt'_{i;\cham'}
        } 
    = \mu_k^* \circ \psi_{\cham'}^* \lrp{ \Xt_{i;\cham'} }.
\end{eqnarray*}

    \item[\bf Case 2:] If $\sgn(\epsilon_{ik})=\sgn(\mathbf c_{k;\cham})$ note that $\cv_{i;\cham'}=\cv_{i;\cham}+\sgn(\epsilon_{ik})[\epsilon_{ik}\cv_{k;\cham}]_+=\cv_{i;\cham}+\sgn(\epsilon_{ik})\epsilon_{ik}\cv_{k;\cham}$. This is used in the step labelled by $\bigstar$ below. We compute:
    \begin{eqnarray*}
     &\psi_\cham^*\circ \mu_k^*\lrp{\Xt_{i;\cham'}}
    =& \psi_\cham^*\lrp{ \Xt_{i;\cham}\lrp{\mathbf u^{| \cv_{k;\cham}|}+\Xt_{k;\cham}^{-\sgn(\epsilon_{ik})}}^{-\epsilon_{ik}} }\\
    &=& \frac{\mathbf u'^{\cv_{i;\cham}}}{\mathbf u^{\cv_{i;\cham}}}\Xt_{i;\cham}'\lrp{\mathbf u^{| \cv_{k;\cham} |}+\lrp{\frac{\mathbf u'^{\cv_{k;\cham}}}{\mathbf u^{\cv_{k;\cham}}}\Xt'_{k;\cham}}^{-\sgn(\epsilon_{ik})}}^{-\epsilon_{ik}}\\
    &\stackrel{| \cv_{k;\cham}|=\sgn(\epsilon_{ik})\cv_{k;\cham}}{=}&
    \frac{\mathbf u'^{\cv_{i;\cham}}}{\mathbf u^{\cv_{i;\cham}}}\Xt_{i;\cham}'
    \lrp{
    \frac{1}{\mathbf u^{-\sgn(\epsilon_{ik})\cv_{k;\cham}}}+ \frac{\mathbf u'^{-\sgn(\epsilon_{ik})\cv_{k;\cham}}}{\mathbf u^{-\sgn(\epsilon_{ik})\cv_{k;\cham}}} \Xt_{k;\cham}'^{-\sgn(\epsilon_{ik})}
    }^{-\epsilon_{ik}}\\
    &=&
    \frac{\mathbf u'^{\cv_{i;\cham}}}{\mathbf u^{\cv_{i;\cham}}}\Xt_{i;\cham}'
    \lrp{
    \frac{\mathbf u'^{-\sgn(\epsilon_{ik})\cv_{k;\cham}}}{\mathbf u^{-\sgn(\epsilon_{ik})\cv_{k;\cham}}}
        \lrp{
        \frac{1}{\mathbf u'^{-\sgn(\epsilon_{ik})\cv_{k;\cham}}} + \Xt_{k;\cham}'^{-\sgn(\epsilon_{ik})}
        }
    }^{-\epsilon_{ik}}\\
    &{=}&
    \frac{\mathbf u'^{\cv_{i;\cham}+\sgn(\epsilon_{ik})\epsilon_{ik}\cv_{k;\cham}}}{\mathbf u^{\cv_{i;\cham}+\sgn(\epsilon_{ik})\epsilon_{ik}\cv_{k;\cham}}}
    \Xt_{i;\cham}'
    \lrp{
    \mathbf u'^{| \cv_{k;\cham}|} + \Xt_{k;\cham}'^{-\sgn(\epsilon_{ik})}
    }^{-\epsilon_{ik}}\\
    &\stackrel{\bigstar}{=}&
    \mu_{k}^*\lrp{\frac{\mathbf u'^{\cv_{i;\cham'}}}{\mathbf u^{\cv_{i;\cham'}}} \Xt_{i;\cham'}' } = \mu_k^*\circ \psi_\cham'^*\lrp{\Xt_{i;\cham'}}.
    \end{eqnarray*}
    
\end{itemize}
\end{proof}

\begin{proposition}\thlabel{open}
$\Xfsps{\cham_0} \to \Spec\lrp{R}$ is an open morphism.
\end{proposition}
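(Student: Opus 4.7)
The plan is to reduce the question to openness of a projection from an affine space, which is a standard fact. Openness of a morphism is local on the source: a morphism $\pi\colon Y\to Z$ is open if and only if there exists an open cover $\{V_\alpha\}$ of $Y$ such that each restriction $\pi|_{V_\alpha}\colon V_\alpha\to Z$ is open.

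First, I would cover $\Xfsps{\cham_0}$ by the affine patches $U_\cham=\Spec\bigl(R[\Xt_{1;\cham},\dots,\Xt_{n;\cham}]\bigr)$ indexed by maximal cones $\cham$ of $\ccF$. By construction of $\Xfsps{\cham_0}$ (see \thref{def:Xfam}) the restriction $\pi|_{U_\cham}\colon U_\cham\to\Spec(R)$ is simply the structure morphism of $\A^n_R=\A^n_{\C}\times_{\C}\Spec(R)\to\Spec(R)$, induced by the inclusion $R\hookrightarrow R[\Xt_{1;\cham},\dots,\Xt_{n;\cham}]$ of $R$ into a polynomial ring in $n$ variables.

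Next, I would invoke the fact that this projection is open. This can be justified in several equivalent ways: $\A^n_R\to\Spec(R)$ is smooth of relative dimension $n$, and smooth morphisms are universally open; alternatively, it is flat (by \thref{flat}, or directly since $R[\Xt_{1;\cham},\dots,\Xt_{n;\cham}]$ is a free $R$-module) and locally of finite presentation, and any such morphism is open by the standard result (EGA IV, 2.4.6). Either way, each $\pi|_{U_\cham}$ is open.

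Since the $U_\cham$ cover $\Xfsps{\cham_0}$ and each $\pi|_{U_\cham}$ is open, the morphism $\pi$ is open. There is no real obstacle here; the statement is essentially a consequence of the flatness already established in \thref{flat} together with the local-model description of the family as an $\A^n_R$-bundle of patches. One should just verify that no extra compatibility is needed beyond what is built into the gluing already discussed, but since openness is a purely local property on the source, the patchwise verification is genuinely sufficient.
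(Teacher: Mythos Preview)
Your proof is correct and follows essentially the same approach as the paper: both reduce to the affine patches $U_\cham\cong\A^n_R$ using that openness is local on the source, and then observe that the structure morphism $\A^n_R\to\Spec(R)$ is open (the paper cites a Stacks Project lemma, while you invoke smoothness or flat plus locally of finite presentation). The arguments are identical in substance.
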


\begin{proof}
Openness is a local property of morphisms so we are free to restrict to affine patches.
We just need to see that $\Spec(R[\Xt_{1;\cham}, \dots, \Xt_{n;\cham}]) \to \Spec\lrp{R}$ is open.
But this is clear,
see e.g. \cite[\href{https://stacks.math.columbia.edu/tag/037G}{Lemma 037G}]{stacks-project}.
\end{proof}

\begin{corollary}\thlabel{smooth}
For $\vb{u} \in \lrp{\C^*}^n$ the fiber $\Xsp_{\vb{u}}$ is smooth.
In particular, $\Xsp$ is smooth.
\end{corollary}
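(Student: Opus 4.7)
The plan is to reduce the statement, via \thref{iso of fibres}, to the smoothness of the single fiber $\Xsp = \Xsp_{\vb{1}}$, and then to verify that smoothness directly from the explicit affine cover of $\Xsp$.

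First, I would invoke \thref{iso of fibres}: any two fibers of $\pi$ over $(\C^*)^n$ are isomorphic as $\C$-schemes. Since $\vb{1} = (1,\dots,1)\in (\C^*)^n$, for every $\vb{u}\in (\C^*)^n$ there is an isomorphism $\Xsp_{\vb{u}} \cong \Xsp_{\vb{1}}$, and by the construction of the family (see \thref{def:Xfam}) the fiber over $\vb{1}$ is precisely $\Xsp$. Hence, it suffices to prove that $\Xsp$ itself is smooth.

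Second, I would appeal to the explicit description of $\Xsp$ from \thref{special} together with the discussion immediately preceding it: $\Xsp$ is covered by the affine patches $\A^n_{M,\cham} = \Spec(\C[S_\cham])$, indexed by the maximal cones $\cham$ of the $\gv$-fan. As observed there, the monoid algebra $\C[S_\cham]$ is a polynomial ring in $n$ algebraically independent variables, so each patch is isomorphic to $\A^n_{\C}$ and hence smooth. Since smoothness is a local property on the source, this implies $\Xsp$ is smooth. Combined with the previous paragraph, this gives smoothness of $\Xsp_{\vb{u}}$ for every $\vb{u}\in (\C^*)^n$, and the ``in particular'' statement is the case $\vb{u}=\vb{1}$.

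There is no genuine obstacle in this argument: the proof is essentially immediate once one recognises that \thref{iso of fibres} lets one transport smoothness from a single well-chosen fiber, and that the given atlas of $\Xsp$ is literally by copies of $\A^n$. Note that flatness and openness of $\pi$ (from \thref{flat} and \thref{open}) are not actually needed for this particular statement; they enter elsewhere in the analysis of the family.
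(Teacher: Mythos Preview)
Your argument is correct, and it is genuinely different from the paper's. The paper argues via the \emph{central} fiber rather than the fiber over $\vb{1}$: it notes that $\Xsp_0 = \TV{\ccF}$ is smooth because the $\gv$-fan is simplicial with ray generators forming a $\Z$-basis, then invokes \cite[Theorem~12.1.6(iii)]{EGAIV3} to get that the locus in the total space with smooth fiber is open, uses \thref{open} to push this down to an open subset of $\Spec(R)$ containing $0$, and finally applies \thref{iso of fibres} to propagate smoothness across all of $(\C^*)^n$. Your route is more elementary: you bypass the semicontinuity input from EGA and the openness of $\pi$ entirely, since the atlas of $\Xsp$ already consists of copies of $\A^n$. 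What the paper's approach buys is a template that would still work if one only knew smoothness at a degenerate fiber and not at a generic one; what your approach buys is a shorter proof with fewer external references for the statement actually being claimed. Your closing remark that \thref{flat} and \thref{open} are not needed here is accurate for your argument, though \thref{open} is used in the paper's own proof.
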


\begin{proof}
First, we observe that $\Xsp_0$ is smooth.
The $\gv$-fan is simplicial, with the generators of each maximal cone forming a $\Z$-basis of $M$.
This implies smoothness for the toric central fiber \cite[Theorem~1.10]{Oda}. 
By \cite[Theorem~12.1.6(iii)]{EGAIV3}, 
the set of $x\in \Xfsps{\cham_0}$ such that $\Xsp_{\pi(x)}$ is smooth is open in $\Xfsps{\cham_0}$.
So, by \thref{open}, the set of $\vb{u} \in \Spec\lrp{R}$ such that $\Xsp_{\vb{u}}$ is smooth is open in $\Spec\lrp{R}$.
Then there is an open neighborhood of $0$ having smooth fibers.
The claim follows from \thref{iso of fibres}.
\end{proof}

%%%%%%%%%%%%%%

\section{Strata and their degenerations}\label{sec:strata}

The results of the last section show that $\Xsp_{\vb{t} }$ (for a given $\vb{t}\in \mathbb{C}^n$) is constructed from a fan much the same way as a toric variety.
Unsurprisingly then, $\Xsp_{\vb{t} }$ is naturally stratified in a manner reminiscent of the usual stratification of a toric variety by lower dimensional toric subvarieties.
In fact, the affine patches $ \mathbb{A}^n_\cham$ defining $\Xsp_{\vb{t} }$ {\emph{are}} toric varieties, and the restriction of $\Xsp_{\vb{t} }$ strata to $ \mathbb{A}^n_\cham$ gives exactly the toric strata of $ \mathbb{A}^n_\cham$.
To get the full strata for $\Xsp_{\vb{t} }$, we simply glue together toric strata from different affine patches via the extended $\Xfsps{\cham_0}$ mutation formula. 
The main results of this section are that this does indeed yield a stratification of $\Xsp_{\vb{t}}$, the resulting strata are lower dimensional $\Xsp$-cluster varieties with specified coefficients, and these strata degenerate precisely to the toric strata of the central fiber $\Xsp_0 = \TV{\ccF}$.

Let us start by fixing terminology and briefly reviewing the toric description, following \cite[\S3.1]{FultonTV}.  
We say a scheme $Y$ is {\it{stratified by subschemes of type $P$}} if there is a collection $\mathcal{S}$ of subschemes $Y_i$ of type $P$ (called {\it{strata}})
such that 
\eqn{
Y= \bigcup_{\mathcal{S}} Y_i
}
and for every pair $\lrp{Y_i,Y_j}$ of elements of $\mathcal{S}$,
$Y_i \cap Y_j$ is also in $\mathcal{S}$.
For example, we discuss below how every toric variety is stratified by toric varieties-- the torus orbit closures.
Alternatively, though this is not the approach we adopt, we can stratify toric varieties by tori (torus orbits) viewing $\varnothing$ as a torus of dimension $-1$.\footnote{There is an analogous stratification of $\Xsp_{\vb{t}}$ is by lower dimensional $\cX$-varieties with specified coefficients, rather than their special completions.}

Let $\TV{\Sigma}$ be the toric variety associated to a fan $\Sigma$.
Let $N$ be the cocharacter lattice of the defining torus and $M$ its character lattice.
For every cone $\tau$ in $\Sigma$, we have a torus orbit $O(\tau)$, which itself is a torus.
We denote the closure of $O(\tau)$ by $V(\tau)$.
Then $V(\tau)$ is a toric variety, and it is what we refer to as a \emph{toric stratum}.
The torus $O(\tau)$ is $\Spec(\C[\tau^\perp \cap M ])$, so the fan for $V(\tau)$ lives in 
$(\tau^\perp \cap M)^*  \otimes \R$.
Let $N(\tau):= (\tau^\perp \cap M)^*$ be the quotient of $N$ by the group completion of $\tau \cap N$.
The fan for $V(\tau)$, denoted $\Star(\tau)$, is built from the cones of $\Sigma$ containing $\tau$ as a face.
If $\sigma$ is such a cone, we take its image $\overline{\sigma}$ in $N(\tau) \otimes \R$.
The collection of these $\sigma$ form a subfan of $\Sigma$, and their images $\overline{\sigma}$ form the fan $\Star(\tau)$ in $N(\tau)\otimes \R$.

To realize $V(\tau)$ as a closed subscheme of $\TV{\Sigma}$, we work with the defining affine patches.
In $V(\tau)$, these have the form $\Spec(\C[\overline{\sigma}^\vee \cap \tau^\perp \cap M])$.
Note that $\overline{\sigma}^\vee \cap \tau^\perp \cap M$ is the same as $\sigma^\vee \cap \tau^\perp \cap M$, and $\Spec\lrp{\C\lrb{\sigma^\vee \cap M}}$ is a defining affine patch of $\TV{\Sigma}$.
We have $\Spec(\C[\sigma^\vee \cap \tau^\perp \cap M]) \hookrightarrow \Spec(\C[\sigma^\vee \cap M])$ via the projection of rings:
\eqn{
\C[\sigma^\vee \cap M] \to \C[\sigma^\vee \cap \tau^\perp \cap M],\ \
z^m \mapsto \begin{cases} z^m  & \mathrm{if }\ m \in \sigma^\vee \cap \tau^\perp \cap M, \\
0 & \mathrm{otherwise}.
\end{cases}
}

Since the defining affine patches of $\Xsp_{\vb{t} }$ are precisely the maximal affine toric varieties making up $\TV{\ccF}$,
these affine patches are stratified as described above.
But how do the strata of different patches relate?
Extending the toric analogy, if we fix a cone $\tau$ in $\ccF$, do the associated strata in distinct affine patches glue together to form a closed subscheme of $\Xsp_{\vb{t}}$?  
And if so, does this subscheme inherit any structure from $\Xsp_{\vb{t}}$, particularly structure encoded by a tropical version of $\Star(\tau)$?

In \cite{FG_X} Fock and Goncharov give results in this direction for $\Xsp = \Xsp_{\vb{1}}$.
Specifically, they show that if we take an $M$-seed $s$, freeze some directions and set the corresponding $\cX$-variables to $0$, we obtain an $\cX$-cluster variety as an immersed subscheme at the boundary of $\Xsp$.
This line of reasoning extends directly to the case of coefficients and is a central argument in \thref{strata}.
The other key ingredient is Cao and Li's result \cite[Theorem~6.2]{CaoLi}. Stated in the terminology of this paper it is as follows:
\begin{theorem}[\cite{CaoLi}]
Let $\lrp{\Gamma,v_0}$ be arbitrary skew-symmetrizable initial data with frozen directions. Then any two clusters containing the same subset $\lbrace A_1,\dots,A_a\rbrace$ of $\cA$-cluster variables can be connected to each other by iterated mutation without exchanging any of the cluster variables in $\lbrace A_1,\dots,A_a\rbrace$.
\end{theorem}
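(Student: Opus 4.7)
The plan is to reduce the claim to a local connectivity property of the cluster complex and then exploit a ``restriction by freezing'' construction.

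First, by \thref{g-fan} the maximal cones of the cluster complex $\cc$ are indexed by the seeds reachable from the initial seed, with extremal rays generated by the $\gv$-vectors of the cluster variables. Two seeds are related by a single mutation exactly when the corresponding maximal cones share a facet; a common subset $\{A_1,\dots,A_a\}$ of cluster variables of two seeds $s,s'$ therefore corresponds to a common face $\tau$ of the two maximal cones, generated by the $\gv$-vectors of $A_1,\dots,A_a$. A mutation at direction $k$ does not exchange any $A_i$ precisely when the shared facet also contains $\tau$. Consequently, the statement is equivalent to the assertion that the collection of maximal cones of $\cc$ containing $\tau$ (i.e.\ the fan $\Star(\tau)$) is connected under facet-adjacency.

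To establish this, I would construct, from any seed $s$ containing $\{A_1,\dots,A_a\}$, new initial data $(\Gamma',v_0')$ obtained from $(\Gamma,v_0)$ by additionally freezing the directions of $A_1,\dots,A_a$ in $s$. Sign-coherence of $\cv$-vectors (\thref{c_sing-coherence}) together with tropical duality (\thref{th:tropical_duality}) ensures that the $\gv$-vectors of the remaining cluster variables form a valid $N$-seed for this restricted data. Mutations of the restricted pattern then correspond bijectively to mutations of the original pattern that avoid $\{A_1,\dots,A_a\}$. Applying the tautological connectivity of the exchange graph of the restricted pattern (through the tree $\orT$ of mutable rank $|\Iuf| - a$) and lifting such a path back to $\orT$ for $(\Gamma,v_0)$ produces the desired mutation sequence.

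The main obstacle is verifying that the $\gv$-fan of $(\Gamma',v_0')$ is faithfully realized as $\Star(\tau)$ inside $\cc$. A priori, additional cones might appear in $\Star(\tau)$ that the restricted pattern does not see, or conversely the restricted pattern might generate cones that do not appear in $\cc$. Proving the two fans agree requires fine control over how $F$-polynomials and $\cv$-vectors transform under freezing; this is the technical heart of Cao and Li's argument and draws on the full scattering diagram machinery of \cite{GHKK}, in particular the synchronicity of $\gv$- and $\cv$-vectors and a careful combinatorial analysis (in the spirit of reddening/maximal green sequences) used to navigate within $\Star(\tau)$.
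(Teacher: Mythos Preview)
The paper does not prove this theorem; it is quoted from \cite{CaoLi} (their Theorem~6.2) and used as a black box in the proof of \thref{strata}. So there is no ``paper's own proof'' to compare against. Your reformulation in terms of facet-connectivity of $\Star(\tau)$ is correct and is precisely how the surrounding paper \emph{applies} the result (see the last paragraph of the proof of \thref{strata}).

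That said, your proposed argument has a genuine circularity that you partly acknowledge but perhaps underestimate. Freezing the directions of $A_1,\dots,A_a$ at the seed $s$ produces a restricted pattern whose exchange graph is, tautologically, the set of seeds reachable from $s$ by mutations avoiding those directions. The whole question is whether $s'$ lies in that set. Your step ``apply the tautological connectivity of the exchange graph of the restricted pattern and lift'' therefore assumes exactly what is to be proved: you must first know that $s'$ appears as a seed of $(\Gamma',v_0')$, and that is not automatic. Rephrasing this as ``the $\gv$-fan of $(\Gamma',v_0')$ equals $\Star(\tau)$'' is a correct restatement of the difficulty, but it is not a reduction --- it is equivalent to the original claim. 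Nothing in the sign-coherence or tropical-duality statements you invoke bridges this gap; they tell you the restricted fan embeds in $\Star(\tau)$, not that it fills it. The actual argument in \cite{CaoLi} proceeds via their notion of \emph{enough $g$-pairs} and a polytope/compatibility-degree analysis rather than the scattering-diagram or reddening heuristics you sketch, so if you want to complete the proof you should consult their paper directly.
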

This result leads to the {\emph{stratification}} of $\Xsp_{\vb{t}}$ by specially completed $\cX$-varieties with specified coefficients, an upgrade on $\Xsp_{\vb{t}} \setminus \cX_{\vb{t}}$ having immersed specially completed $\cX$-varieties with specified coefficients.

\begin{theorem}\thlabel{strata}
    Let $\Xsp_{\vb{t}}$ be a specially completed $\cX$-variety with specified coefficients, and 
    let $\tau$ be a $k$-dimensional cone in $\ccF$.
    Then $\tau$ defines a codimension $k$ closed subscheme $V(\tau)_{\vb{t}}$ of $\Xsp_{\vb{t}}$ for each $\vb{t}$.
    Moreover, $V(\tau)_{\vb{t}}$ is again a specially completed $\cX$-variety with specified coefficients $\vb{t}$.
\end{theorem}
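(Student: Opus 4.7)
The plan is to build $V(\tau)_{\vb t}$ by gluing toric strata from the affine patches $\A^n_{M,\cham}$ for all maximal $\gv$-cones $\cham$ containing $\tau$ as a face, and then to identify the result with a specially completed $\cX$-variety whose $\gv$-fan is $\Star(\tau)$. The proof will have three main ingredients: (i) a local toric description of the candidate stratum on each patch, (ii) a compatibility check showing that these local pieces glue under the coefficient-laden mutation formula \eqref{eq:mutfamily}, and (iii) an identification of the glued scheme with a $\Xsp$-variety with specified coefficients associated to the initial data obtained by freezing the directions spanning $\tau$.

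First I fix a maximal cone $\cham_\tau$ of $\ccF$ containing $\tau$ as a face; after reindexing the seed $s=s_{\cham_\tau}$, we may assume $\tau$ is spanned by $\gv_{1;\cham_\tau},\dots,\gv_{k;\cham_\tau}$. In $\A^n_{M,\cham_\tau}$ the toric stratum $V(\tau)\cap \A^n_{M,\cham_\tau}$ is cut out by the vanishing ideal $\langle X_{1;\cham_\tau},\dots,X_{k;\cham_\tau}\rangle$, and its coordinate ring is $\C[X_{k+1;\cham_\tau},\dots,X_{n;\cham_\tau}]$. The same recipe applies at every other maximal cone $\cham$ having $\tau$ as a face: the $\cv$-vectors dual to the rays of $\tau$ become coordinate functions whose common zero locus is the toric stratum. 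Define the candidate $V(\tau)_{\vb t}$ as the union of these closed subschemes, one for each maximal cone of $\Star(\tau)$ (viewed inside $\Xsp_{\vb t}$).

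Next I verify the gluing. Let $\cham$ and $\cham'$ be adjacent $\gv$-cones, both containing $\tau$, related by mutation in some direction $j\in\{k+1,\dots,n\}$ (mutations in the directions $1,\dots,k$ would exit the star of $\tau$, and by \cite[Theorem~6.2]{CaoLi} the subset of chambers containing $\tau$ is connected under such non-$\tau$ mutations--this is the crucial input that promotes an immersed boundary to an honest stratification). Applying \eqref{eq:mutfamily} for $\mu_j$ with $i\in\{1,\dots,k\}$, the expression $\mu_j^*(\Xt_{i;\cham'})=\Xt_{i;\cham}\,(\cdots)^{-\epsilon_{ij}}$ is a unit multiple of $\Xt_{i;\cham}$ on the overlap \eqref{eq:FamilyOverlap}, so the ideal $\langle \Xt_{1;\cham},\dots,\Xt_{k;\cham}\rangle$ pulls back to $\langle \Xt_{1;\cham'},\dots,\Xt_{k;\cham'}\rangle$. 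Thus the local closed subschemes glue, and the quotient mutation formula, obtained by setting these $\Xt$'s to zero, is exactly \eqref{eq:mutfamily} for the remaining variables with exchange matrix $\boldsymbol\varepsilon_s|_{\{k+1,\dots,n\}}$ and induced coefficients. By \thref{c-deg} and the sign-coherence of $\cv$-vectors, the coefficient monomials survive the restriction, and the induced data is precisely the cluster initial data $(\Gamma|_\tau,s|_\tau,\bp|_\tau)$ one gets by freezing directions $1,\dots,k$.

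Finally, to identify the glued scheme with a specially completed $\cX$-variety with coefficients, I note that the maximal cones of the induced fan on $N(\tau)\otimes\R$ are exactly the images of maximal cones of $\ccF$ containing $\tau$, i.e.~they form $\Star(\tau)$; Cao--Li's theorem ensures these cones are all reached from $\cham_\tau$ by mutations that do not exchange any of $A_{1;\cham_\tau},\dots,A_{k;\cham_\tau}$, so the induced cluster pattern is globally well-defined and its $\gv$-fan is $\Star(\tau)$. Hence the glued closed subscheme is the specially completed $\cX$-variety with specified coefficients attached to this restricted initial data, of codimension $k$ by construction. The main obstacle is precisely the compatibility of freezing-directions-and-setting-variables-to-zero with the $\vb t$-deformed mutations in the remaining directions: this requires showing that the specialized coefficients $\vb t|_\tau$ match under every mutation in $\Star(\tau)$, which follows from \thref{separation} applied to the $Y$-pattern with principal coefficients restricted along $\tau$.
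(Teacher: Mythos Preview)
Your proof takes essentially the same approach as the paper's: both define the stratum locally as the vanishing locus of the $\cX$-variables indexed by the rays of $\tau$, check that these ideals are preserved under mutation in the remaining directions (the paper phrases this as the pair of observations that $\Xt_{j;\cham'}$ is a unit multiple of $\Xt_{j;\cham}$ for $j$ in the $\tau$-directions and that the mutation formula for the surviving variables is independent of those being set to zero), and invoke Cao--Li \cite[Theorem~6.2]{CaoLi} to conclude that all maximal cones containing $\tau$ are reached by mutations avoiding the $\tau$-directions, giving a genuine stratification rather than a collection of immersed pieces.

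Two small points of language to tighten: the restricted initial data is obtained by \emph{deleting} the $\tau$-indices from the exchange matrix (restricting $\boldsymbol\varepsilon$ to $I(\tau)\times I(\tau)$), not by freezing them---frozen directions would still contribute variables, whereas here they are set to zero and removed. And the final paragraph about ``specialized coefficients $\vb t|_\tau$ matching'' is more than you need: the coefficients in the restricted mutation formula are simply inherited from the ambient formula \eqref{eq:mutfamily} (same $\vb t$, same $\cv$-vectors $\bc_{k;\cham}$ of the ambient pattern), so the resulting scheme is an $\cX$-variety with specified coefficients for \emph{some} compatible coefficient $Y$-pattern, not necessarily principal for the restricted data. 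The paper handles this by simply observing that the restricted mutation relations define an $\cX$-variety with specified coefficients, without separately analysing how the coefficient pattern restricts.
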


\begin{proof}
On the affine patches defining $\Xsp_{\vb{t}}$ we still have the inclusion
\eqn{
\Spec\lrp{\C\lrb{ \sigma^\vee \cap \tau^\perp \cap M}} \hookrightarrow \Spec\lrp{\C\lrb{ \sigma^\vee \cap M}}           
}
for all cones $\sigma$ of $\ccF$ containing $\tau$.
Note that $\tau$ is the $\R_{\geq 0}$-span of $k$-many $\gv$-vectors.
Then for any $\gv$-cone $\cham$ containing $\tau$, $\tau^\perp$ is the $\R$-span of the $(n-k)$-many $\cv$-vector generators of $\chamdual = \cham^\vee$ that are orthogonal to $\tau$.

Take $\sigma$ to be an $m$-dimensional cone of $\ccF$ with $\tau \subset \sigma \subset \cham$.
Then
 ${\C[\sigma^\vee \cap \tau^\perp \cap M]}$ is just $\C [ X_{i_1},\dots, X_{i_{m-k}},X_{i_{m-k+1}}^{\pm 1},\dots,X_{i_{n-k}}^{\pm 1} ]$.
Here the first $m-k$ terms are the $\cX$-variables associated to generators of $\chamdual$ that are orthogonal to $\tau$ but not $\sigma$,
while the remaining terms correspond to generators orthogonal
to both $\tau$ and $\sigma$.
There are two extreme cases.
When $\sigma = \tau$, we have $\sigma^\vee \cap \tau^\perp = \tau^\perp$. 
Then
\eqn{
\Spec\lrp{\C\lrb{ \sigma^\vee \cap \tau^\perp \cap M}} =  \Spec\lrp{
\C\lrb{X_{i_1}^{\pm 1},\dots,X_{i_{n-k}}^{\pm 1}}
},           
} 
which will be a torus in the atlas of the $\cX$-variety associated to $\tau$.
We call it $T_{M,\cham}(\tau)$.
On the other side of the spectrum, when $\sigma = \cham$ we have 
\eqn{
\Spec\lrp{\C\lrb{ \sigma^\vee \cap \tau^\perp \cap M}} =  \Spec\lrp{
\C\lrb{X_{i_1},\dots,X_{i_{n-k}}}
}.           
}
This will be the corresponding copy of $\A^{n-k}$ 
in the atlas of this $\cX$-variety's special completion with specified coefficients.  Call it $\A^{n-k}_{M,\cham}\lrp{\tau}$.

Mutation is given by crossing walls of $\ccF$.
The relevant mutations here correspond to walls containing $\tau$, and mutation directions correspond to indices of $\tau^\perp$.
Specifically, if we set ${I(\tau) :=\lrc{i_1,\dots,i_{n-k}}}$, the relevant mutation directions are $\Iuf(\tau):= I(\tau)\setminus F$.
Observe that:
\begin{enumerate}
    \item \label{consistent} Since we will never mutate at $j \notin I(\tau)$, by the mutation formula (\ref{eq:mutfamily}) we have 
\eqn{\mu_k^*\lrp{\Xt_{j;\cham'}} = \Xt_{j;\cham}\lrp{\bt^{[\sgn(\epsilon_{jk})\bc_{k;\cham}]_+} + \bt^{[-\sgn(\epsilon_{jk})\bc_{k;\cham}]_+}\Xt_{k;\cham}^{-\sgn(\epsilon_{jk})}}^{-\epsilon_{jk}}}
    for all $j \notin I(\tau)$.
    \item \label{independent} For $i \in I(\tau)$, the mutation formula (\ref{eq:mutfamily}) is independent of $\Xt_{j;\cham}$ for all $j \notin I(\tau)$.
\end{enumerate}
 
On account of (\ref{consistent}), we can consistently set $\Xt_{j;\cham}=0 $ for all $j \notin I(\tau)$ and all $\gv$-cones $\cham$ containing $\tau$.
By (\ref{independent}), doing so will not alter the relations among the variables we have not set to 0-- those indexed by $I(\tau)$.
Now fix some $\cham_0 \supset \tau$.
By the preceding discussion, if we set $\Xt_{j;\cham}=0$ for $j \notin \Iuf(\tau)$, $\cham \supset \tau$, and restrict the indices of our exchange matrix $\boldsymbol{\varepsilon}$ at $\cham_0$ to $I(\tau)$,
what we obtain will be an immersed $(n-k)$-dimensional $\cX$-variety with specified coefficients $\cX_{\vb{t}}(\tau)_{\cham_0}$ glued from the tori $T_{M,\cham}(\tau)$ mutation equivalent to $T_{M,\cham_0}(\tau)$.
Replacing $T_{M,\cham}(\tau)$ with $\A^{n-k}_{M,\cham}(\tau)$ gives its special completion $\Xsp_{\vb{t}}(\tau)_{\cham_0}$.

Now let $\cham_0'$ be another $\gv$-cone containing $\tau$.
If $\cham_0$ and $\cham_0'$ are connected by mutation {\emph{in}} $\Iuf(\tau)$, then $\cX_{\vb{t}}(\tau)_{\cham_0} = \cX_{\vb{t}}(\tau)_{\cham_0'}$ and $\Xsp_{\vb{t}}(\tau)_{\cham_0} = \Xsp_{\vb{t}}(\tau)_{\cham_0'}$.  
If, on the other hand, $\cham_0$ and $\cham_0'$ were only connected by mutation in $\Iuf$ (rather than $\Iuf(\tau)$) then mutation in $\Iuf$ would glue $\cX_{\vb{t}}(\tau)_{\cham_0}$ with $ \cX_{\vb{t}}(\tau)_{\cham_0'}$
and $\Xsp_{\vb{t}}(\tau)_{\cham_0} $ with  $\Xsp_{\vb{t}}(\tau)_{\cham_0'}$
in the ambient space $\Xsp_{\vb{t}}$.  
But $\cX_{\vb{t}}(\tau)_{\cham_0}$ and $\cX_{\vb{t}}(\tau)_{\cham_0'}$ would be distinct immersed $\cX$-cluster varieties with specified coefficients (and their special completions would likewise be distinct), and we would {\emph{not}} have the claimed stratification.
However, the content of \cite[Theorem~6.2]{CaoLi} is precisely that $\cham_0$ and $\cham_0'$ {\emph{are}} connected by mutation in $\Iuf(\tau)$.
That is, $\Xsp_{\vb{t}}(\tau)_{\cham}$ is independent of $\cham$, and is exactly the immersed subscheme given by $\Star\lrp{\tau}$.
We call it $V\lrp{\tau}_{\vb{t}}$.
\end{proof}

From this perspective, we have a family of specially completed $\cX$-varieties with specified coefficients, which are in turn stratified by specially completed $\cX$-varieties with specified coefficients.
The $\Xsp_{\vb{t}}$ degenerate to toric varieties, which themselves are stratified by toric varieties.
It would be very satisfying then if the $\Xsp_{\vb{t}}$ strata degenerate to the toric strata.
In fact, from the description of the $\Xsp_{\vb{t}}$ strata given above, it is almost immediate that this occurs.
Taking $\vb{t}$ to $0$ in \eqref{eq:mutfamily} yields the following result.

\begin{theorem}\thlabel{strata2strata}
Let $\Xsp_{\vb{t}}$ and $V(\tau)_{\vb{t}}$ be as in \thref{strata}.
Then $V(\tau)_{\vb{t}}$ degenerates to the toric stratum $V(\tau)$ of the toric central fiber.
\end{theorem}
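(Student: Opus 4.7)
The plan is to realize $V(\tau)_{\vb{t}}$, as $\vb{t}$ varies, as a flat subfamily of $\Xfsps{\cham_0}$ which is itself an $\Xfsps{\cdot}$-family for the cluster data associated to $\Star(\tau)$. The central fiber of such a family is controlled by \thref{toric_degen}, and matching it with the toric stratum $V(\tau) \subset \TV{\ccF}$ is a direct combinatorial comparison.

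First I would recall from the proof of \thref{strata} that $V(\tau)_{\vb{t}}$ is built from affine patches $\A^{n-k}_{M,\cham}(\tau) = \Spec\lrp{\C\lrb{X_{i_1;\cham},\dots,X_{i_{n-k};\cham}}}$ indexed by $\gv$-cones $\cham \supset \tau$, where the $X_{i_j;\cham}$ correspond to the $\cv$-generators of $\chamdual$ orthogonal to $\tau$, and where gluing is given by the restriction of \eqref{eq:mutfamily} to mutations in $\Iuf(\tau) = I(\tau)\setminus F$. Crucially, observations (\ref{consistent}) and (\ref{independent}) in that proof, together with \cite[Theorem~6.2]{CaoLi}, guarantee that setting $\Xt_{j;\cham} = 0$ for $j \notin I(\tau)$ is consistent across all relevant mutations.

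Next I would let $\vb{t}$ vary and assemble the patches $U_\cham(\tau) := \Spec\lrp{R\lrb{\Xt_{i_1;\cham},\dots,\Xt_{i_{n-k};\cham}}}$ into an $R$-scheme $\mathscr{V}(\tau)_{\cham_0}$ glued by the restriction of \eqref{eq:mutfamily} to $\Iuf(\tau)$. Since observation (\ref{independent}) shows the mutation formula for $i\in \Iuf(\tau)$ involves only $\Xt_{j;\cham}$ with $j\in I(\tau)$ (and the coefficients $t_\ell$), this is well-defined and yields a closed subscheme of $\Xfsps{\cham_0}$ with fiber over $\vb{t}$ equal to $V(\tau)_{\vb{t}}$. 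Moreover, $\mathscr{V}(\tau)_{\cham_0}$ is precisely of the form $\Xfsps{\overline{\cham_0}}$ for the lower-dimensional cluster data whose $\gv$-fan is $\Star(\tau)$: the generators of $\overline{\cham}$ are the images in $N(\tau)\otimes\R$ of the $\gv$-vectors spanning $\cham$ but not $\tau$, their dual $\cv$-vectors live in $\tau^\perp$, and the exchange matrix is the submatrix of $\boldsymbol{\varepsilon}_{\cham_0}$ indexed by $I(\tau)$. Applying \thref{toric_degen} to this smaller cluster data, the central fiber of $\mathscr{V}(\tau)_{\cham_0} \to \Spec(R)$ is $\TV{\Star(\tau)}$.

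Finally I would compare this central fiber with the toric stratum $V(\tau) \subset \TV{\ccF}$. By the standard toric description recalled at the start of this section, $V(\tau)$ is $\TV{\Star(\tau)}$, embedded via the quotient maps $\C\lrb{\sigma^\vee \cap M} \twoheadrightarrow \C\lrb{\sigma^\vee \cap \tau^\perp \cap M}$ for $\sigma \supset \tau$. On each patch, taking $\vb{t} \to 0$ in the construction of $V(\tau)_{\vb{t}}$ precisely kills the variables $X_{j;\cham}$ with $j\notin I(\tau)$ while leaving the $X_{i_1;\cham},\dots,X_{i_{n-k};\cham}$ (the generators corresponding to $\sigma^\vee \cap \tau^\perp \cap M$), and by \thref{toric_degen} the transition maps between these patches degenerate to the toric gluing prescribed by $\Star(\tau)$. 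This matches the embedding $V(\tau) \hookrightarrow \TV{\ccF}$ patch by patch, completing the identification.

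The only real subtlety is the combinatorial check in the third step that the data $(\overline{\gv}_{i;\cham}, \cv_{i;\cham})_{i\in I(\tau), \cham \supset \tau}$ does indeed satisfy the hypotheses needed to view $\mathscr{V}(\tau)_{\cham_0}$ as an instance of \thref{def:Xfam}; once this is in place, everything else follows from \thref{toric_degen} and the toric description of $V(\tau)$.
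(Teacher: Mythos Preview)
Your overall strategy is sound and matches the paper's: restrict the mutation formula \eqref{eq:mutfamily} to indices in $I(\tau)$, let $\vb{t}$ vary, and take $\vb{t}\to 0$. The paper's proof is in fact a one-liner (``taking $\vb{t}$ to $0$ in \eqref{eq:mutfamily}''), relying on the computation already done in \thref{toric_degen}.

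There is, however, a genuine imprecision in your middle step. You claim that $\mathscr{V}(\tau)_{\cham_0}$ is \emph{precisely} of the form $\Xfsps{\overline{\cham_0}}$ for the lower-dimensional cluster data, and then invoke \thref{toric_degen}. But the coefficients appearing in the restricted mutation formula are still $\bt^{[\pm\cv_{k;\cham}]_+}$, where $\cv_{k;\cham}\in\Z^n$ is the ambient $\cv$-vector relative to the original initial seed $s_0$. These are not the principal coefficients for the rank-$(n-k)$ cluster structure on $\Star(\tau)$: unless $\tau$ happens to be a face of the initial cone $\cham_0$, the ambient $\cv$-vectors need not project to the intrinsic $\cv$-vectors of the smaller pattern, and in any case they live in $\Z^n$ rather than $\Z^{n-k}$. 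So $\mathscr{V}(\tau)_{\cham_0}$ is an $\cX$-variety with \emph{some} geometric coefficients in $\mathrm{Trop}(t_1,\dots,t_n)$, but not an instance of \thref{def:Xfam} for the smaller data, and \thref{toric_degen} does not literally apply.

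The fix is minor and is exactly what the paper does: bypass the identification and argue directly. For each mutation $\mu_k$ with $k\in\Iuf(\tau)$, the restricted formula \eqref{eq:mutfamily} still has the shape analyzed in the proof of \thref{toric_degen}; sign-coherence guarantees $\cv_{k;\cham}\neq 0$, so $\bt^{|\cv_{k;\cham}|}\to 0$ as $\vb{t}\to 0$, and the same case analysis (overlaps and transition functions) shows the gluing degenerates to the toric gluing for $\Star(\tau)$. Your third paragraph already contains this argument; you just do not need the detour through $\Xfsps{\overline{\cham_0}}$ to get there.
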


\section{Examples and applications}\label{sec:examples}

In this section we give examples of toric degenerations of compactified cluster varieties, and we relate the $\cA$ and $\cX$ degenerations in these examples.

\subsection{Rietsch--Williams degeneration of Grassmannians}\label{ex:RW}
In \cite{RW17}, Rietsch and Williams study Newton--Okounkov bodies for the anticanonical divisor of the Grassmannian.
For a given seed in the underlying cluster algebra they construct a valuation using {\emph{flow polynomials}} and Postnikov's \emph{plabic graphs}\footnote{In this setting plabic graphs encode cluster seeds consisting of only Pl\"ucker coordinates. We refer to \cite[\S3]{RW17} for details.} (see \cite{Pos06}).
By \cite{An13} and \cite{Kav15} the Newton--Okounkov bodies (rational polytopes in this case) induce toric degenerations of the Grassmannain.
Further, they show that the Newton--Okounkov polytopes are in fact equal to polytopes arising from a Landau--Ginzburg potential introduced by Marsh and Rietsch in \cite{MR_Bmod_publ}.

Below we relate the Rietsch--Williams construction to the $\Xfam$- and $\cAp$-families for $\Grass_2(\C^5)$.
Passing though $\Xfam$, we show that the Rietsch--Williams toric degeneration is precisely the $\cAp$-family of \cite{GHKK}.
We identify both the Newton--Okounkov body $\Delta_{\mathrm{RW}}$ and the superpotential polytope  $\Gamma_{\mathrm{RW}}$ of \cite{RW17} with the superpotential polytope $\Xi_{\mathrm{GHKK}}$ of \cite{GHKK}, giving a new proof of the equality of the Rietsch--Williams polytopes.
We will treat the general case of $\Grass_k\lrp{\C^n}$ in a separate article \cite{BCMNC}. 
We use the notation introduced in \cite{RW17}.
The strategy is as follows.
\begin{enumerate}
    \item Identify the Marsh--Rietsch potential on $\XXdo \subset \Grass_2(\C^5)$ with the Gross--Hacking--Keel--Kontsevich potential on its mirror $\XXo$ via an isomorphism of $\XXdo \stackrel{\sim}{\to} \XXo $. This identifies $\Gamma_{\mathrm{RW}}$ with $\Xi_{\mathrm{GHKK}}$ and it identifies as well the uncompactified families $\Xfam$ and $\cAp$.
    \item Use this isomorphism to reinterpret the Rietsch--Williams flow polynomials for Pl\"ucker coordinates on $\XXdo$ as polynomials in the $\cX$-variables of $\XXo$.
    \item Extend the flow polynomials to $\Xfam$ and identify the Rietsch--Williams valuation $\val_G\lrp{p_{ij}}$ of each Pl\"ucker coordinate $p_{ij}$ with $\cv\lrp{p_{ij}}$. This identifies $\Delta_{\mathrm{RW}}$ with $\Xi_{\mathrm{GHKK}}$, and establishes that the toric degeneration {\emph{of the compactification}} $\Grass_2(\C^5)$ in \cite{RW17} matches the \cite{GHKK} toric degeneration $\overline{\cAp}$.
\end{enumerate}
In the affine cone $C(\Grass_2(\C^5))$ of $\Grass_2(\C^5)$ take the divisors $D_{i,i+1}:=\lrc{p_{i,i+1}=0}$ for $i\in \Z/ 5 \Z$ and let $D$ be their union.
The $\cA$-cluster variety we are considering is the complement of $D$ in $C(\Grass_2(\C^5))$.
We discussed this cluster variety briefly in \thref{ex:GrassCoeff} to illustrate how to pass from a cluster pattern with geometric coefficients to a higher rank coefficient free cluster pattern with frozen directions.
Let $\XXdo$ denote the image of $\cA$ in $\Grass_2(\C^5)$. It inherits a cluster structure from $\cA$.
Consider the plabic graph $G^{\mathrm{rec}}_{3,5}$ with perfect orientation as shown in Figure~\ref{fig:plabic}.\smallskip

\noindent
\begin{minipage}{\linewidth}
\captionsetup{type=figure}
\begin{center}
\begin{tikzpicture}[scale=.6]

    \draw[name path=circle] (0,0) circle (3cm);
    
    \node[circle, fill=black, inner sep=0pt, minimum size=5pt] (mid) at (0,0) {};
    \node[circle, fill=black, inner sep=0pt, minimum size=5pt] (r) at (2,0) {};
    \node[circle, fill=black, inner sep=0pt, minimum size=5pt] (l) at (-2,0) {};        
    \node[circle, draw=black, fill=white, inner sep=0pt, minimum size=5pt] (tr) at (1,1) {};
    \node[circle, draw=black, fill=white, inner sep=0pt, minimum size=5pt] (tl) at (-1,1) {};
    \node[circle, draw=black, fill=white, inner sep=0pt, minimum size=5pt] (br) at (1,-1) {};
    \node[circle, draw=black, fill=white, inner sep=0pt, minimum size=5pt] (bl) at (-1,-1) {};
    
    \path[name path=path1] (0,1)--(3,1);
    \path[name path=path2] (0,0)--(4,0);
    \path[name path=path3] (1,0)--(1,-3);
    \path[name path=path4] (-1,0)--(-1,-3);
    \path[name path=path5] (0,0)--(-3,-2);
    
    \path[name intersections={of=circle and path1,by=1c}];
    \path[name intersections={of=circle and path2,by=2c}];
    \path[name intersections={of=circle and path3,by=3c}];
    \path[name intersections={of=circle and path4,by=4c}];
    \path[name intersections={of=circle and path5,by=5c}];

    \path (1c)++(0.25,0) node {\small $1$}; 
    \path (2c)++(0.25,0) node {\small $2$}; 
    \path (3c)++(0,-0.4) node {\small $3$}; 
    \path (4c)++(0,-0.4) node {\small $4$}; 
    \path (5c)++(-0.2,-0.25) node {\small $5$}; 

    \draw[postaction={decorate},decoration={markings,mark=at position .5 with {\arrow[line width=1.25pt] {>};}}](1c)--(tr);
    \draw[postaction={decorate},decoration={markings,mark=at position .5 with {\arrow[line width=1.25pt] {>};}}](tr)--(tl);
    \draw[postaction={decorate},decoration={markings,mark=at position .5 with {\arrow[line width=1.25pt] {>};}}](2c)--(r);
    \draw[postaction={decorate},decoration={markings,mark=at position .5 with {\arrow[line width=1.25pt] {>};}}](r)--(br);
    \draw[postaction={decorate},decoration={markings,mark=at position .5 with {\arrow[line width=1.25pt] {>};}}](tr)--(r);
    \draw[postaction={decorate},decoration={markings,mark=at position .5 with {\arrow[line width=1.25pt] {>};}}](br)--(mid);
    \draw[postaction={decorate},decoration={markings,mark=at position .5 with {\arrow[line width=1.25pt] {>};}}](br)--(3c);
    \draw[postaction={decorate},decoration={markings,mark=at position .5 with {\arrow[line width=1.25pt] {>};}}](mid)--(bl);
    \draw[postaction={decorate},decoration={markings,mark=at position .5 with {\arrow[line width=1.25pt] {>};}}](bl)--(4c);
    \draw[postaction={decorate},decoration={markings,mark=at position .5 with {\arrow[line width=1.25pt] {>};}}](bl)--(l);
    \draw[postaction={decorate},decoration={markings,mark=at position .5 with {\arrow[line width=1.25pt] {>};}}](l)--(5c);
    \draw[postaction={decorate},decoration={markings,mark=at position .5 with {\arrow[line width=1.25pt] {>};}}](tl)--(l);
    \draw[postaction={decorate},decoration={markings,mark=at position .5 with {\arrow[line width=1.25pt] {>};}}](tl)--(mid);

    \node at (1,0) {$\Yng(2)$};
    \node at (-1,0) {$\Yng(1)$};
    \node at (2.25,0.5) {$\Yng(3)$};
    \node at (2,-1.25) {$\Yng(3,3)$};
    \node at (0,-1.5) {$\Yng(2,2)$};
    \node at (-1.75,-1.25) {$\Yng(1,1)$};
    \node at (0,2) {$\varnothing$};

\end{tikzpicture}
\captionof{figure}{\label{fig:plabic} $G^{\mathrm{rec}}_{3,5}$ with perfect orientation, as in \cite[Figure~7]{RW17}. 
}  \bigskip
\end{center}
\end{minipage}

Faces of the plabic graph, labeled by Young diagrams $\mu$, are associated with variables $x_{\mu}$.
The flow model assigns to every Pl\"ucker coordinate $p_{ij}\in\C[C(\Grass_2(\C^5))]$ a polynomial in the variables $x_{\mu}$.
This yields the flow polynomials (see \cite[Example~6.11]{RW17}): 

\eq{
\flow (p_{12}) &= 1,  \qquad\qquad\qquad\qquad\qquad\qquad
\flow (p_{13}) = x_{\scaleto{\Yng(3,3)\mathstrut}{5pt}},\\
\flow (p_{14}) &= x_{\scaleto{\Yng(2,2)\mathstrut}{5pt}}x_{\scaleto{\Yng(3,3)\mathstrut}{5pt}}, \qquad\qquad\qquad\qquad\qquad \ 
\flow (p_{15}) = x_{\scaleto{\Yng(1,1)\mathstrut}{5pt}}x_{\scaleto{\Yng(2,2)\mathstrut}{5pt}}x_{\scaleto{\Yng(3,3)\mathstrut}{5pt}},\\
\flow (p_{23}) &= x_{\scaleto{\Yng(3)\mathstrut}{5pt}}x_{\scaleto{\Yng(3,3)\mathstrut}{5pt}}, \qquad\qquad\qquad\qquad \ \ \ \ \ \ 
\flow (p_{24}) = x_{\scaleto{\Yng(3)\mathstrut}{5pt}}x_{\scaleto{\Yng(2,2)\mathstrut}{5pt}}x_{\scaleto{\Yng(3,3)\mathstrut}{5pt}}(1+x_{\scaleto{\Yng(2)\mathstrut}{5pt}}),\\
\flow (p_{25}) &= x_{\scaleto{\Yng(3)\mathstrut}{5pt}}x_{\scaleto{\Yng(1,1)\mathstrut}{5pt}}x_{\scaleto{\Yng(2,2)\mathstrut}{5pt}}x_{\scaleto{\Yng(3,3)\mathstrut}{5pt}}(1+x_{\scaleto{\Yng(2)\mathstrut}{5pt}}+x_{\scaleto{\Yng(1)\mathstrut}{5pt}}x_{\scaleto{\Yng(2)\mathstrut}{5pt}}), \ \ \ \ \ 
\flow (p_{34}) = x_{\scaleto{\Yng(2)\mathstrut}{5pt}}x_{\scaleto{\Yng(3)\mathstrut}{5pt}}x_{\scaleto{\Yng(2,2)\mathstrut}{5pt}}x^2_{\scaleto{\Yng(3,3)\mathstrut}{5pt}},\\
\flow (p_{35}) &= x_{\scaleto{\Yng(2)\mathstrut}{5pt}}x_{\scaleto{\Yng(3)\mathstrut}{5pt}}x_{\scaleto{\Yng(1,1)\mathstrut}{5pt}}x_{\scaleto{\Yng(2,2)\mathstrut}{5pt}}x^2_{\scaleto{\Yng(3,3)\mathstrut}{5pt}}(1+x_{\scaleto{\Yng(1)\mathstrut}{5pt}}), \qquad\qquad
\flow (p_{45}) = x_{\scaleto{\Yng(1)\mathstrut}{5pt}}x_{\scaleto{\Yng(2)\mathstrut}{5pt}}x_{\scaleto{\Yng(3)\mathstrut}{5pt}}x_{\scaleto{\Yng(1,1)\mathstrut}{5pt}}x^2_{\scaleto{\Yng(2,2)\mathstrut}{5pt}}x^2_{\scaleto{\Yng(3,3)\mathstrut}{5pt}}.
}{FlowPoly}

We interpret $\flow(p_{12}) = 1$ as a normalization.
Dividing every Pl\"ucker coordinate by $p_{12}$ allows us to think of them as functions on $\XXdo$ rather than $\cA$. 
In light of this, we view the arguments of $\flow$ as functions on $\XXdo$. 
We will reinterpret the flow polynomials in terms of $\cX$-variables, and then extend the resulting functions to $\Xfam$. 
Observe that the flow polynomials in \eqref{FlowPoly} which are actually {\emph{monomials}} determine the cluster depicted in Figure~\ref{fig:quiver Gr(2,5)}.\medskip

\noindent
\begin{minipage}{\linewidth}
\captionsetup{type=figure}
\begin{center}
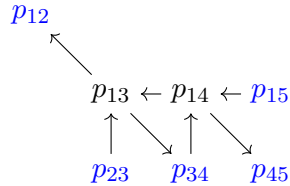

\begin{tikzpicture}[scale=.7]

    \node (12) at (0,3) {\tc{blue}{$p_{12}$}};
    \node (13) at (1.5,1.5) {\tc{black}{$p_{13}$}};
    \node (14) at (3,1.5) {\tc{black}{$p_{14}$}};
    \node (15) at (4.5,1.5) {\tc{blue}{$p_{15}$}};
    \node (23) at (1.5,0) {\tc{blue}{$p_{23}$}};
    \node (34) at (3,0) {\tc{blue}{$p_{34}$}};
    \node (45) at (4.5,0) {\tc{blue}{$p_{45}$}};

    \draw[->] (13)--(12);
    \draw[->] (14)--(13);
    \draw[->] (15)--(14);
    \draw[->] (23)--(13);
    \draw[->] (34)--(14);
    \draw[->] (13)--(34);
    \draw[->] (14)--(45);

\end{tikzpicture}
\captionof{figure}{\label{fig:quiver Gr(2,5)} Cluster determined by the monomials of \eqref{FlowPoly}.}
\end{center} \bigskip
\end{minipage}

\noindent
We take this to be the initial cluster and write $s_0$ for the initial seed.
Throughout this example, cluster variables written without a subscript specifying the seed will come from this seed. 
So when we write, for instance, $X_{i}$, we will mean $X_{i;s_0}$.

The Pl\"ucker coordinates are $\cA$-variables, and we want to express them in terms of $\cX$-variables.
To do this, we make use of a map $p:\cA\to\cX$  (see \thref{def:p-map}).
Recall that this map $p$ is defined in terms of pull-backs of functions.
On character lattices of cluster tori, it is given by a map $p^*:N {\to } M $
that agrees with $n\mapsto \sk{n}{\ \cdot\ }$ up to a choice of map $N/\Nuf \to \Nuf^\perp$.\footnote{As the exchange matrix in this example is skew-symmetric, $\cA^\vee = \cX$ and the superscript ${}^\circ$ for lattices becomes unnecessary.}

Recall that $A_i= z^{f_i}$ and $X_i = z^{e_i}$. 
For $e_{13}$ and $e_{14}$, we compute
\eq{p^*\lrp{X_{13}} &= \frac{p_{12} p_{34}}{p_{14} p_{23}}, 
\qquad \text{\small{ $ \frac{\flow\lrp{p_{12}} \flow\lrp{p_{34}}}{\flow\lrp{p_{14}}\flow\lrp{ p_{23}}} $}} = x_{\yd{2}}, \ \ \ %\text{  and }
\\
p^*\lrp{X_{14}} &= \frac{p_{13} p_{45}}{p_{15} p_{34}}, 
\qquad \text{\small{ $\frac{\flow\lrp{p_{13}} \flow\lrp{p_{45}}}{\flow\lrp{p_{15}}\flow\lrp{ p_{34}}} $}} = x_{\yd{1}}.}{X13}

The remaining pull-backs depend on the choice of map
$N/\Nuf \to \Nuf^\perp$,
and we will compare the Marsh--Rietsch and Gross--Hacking--Keel--Kontsevich potentials in order to make this choice.
From the \cite{GHKK} perspective, each component $D_{i,i+1}$ of $D$ defines a $\tf$-function $\tf_{i,i+1}$ on the mirror $\cX$ of $\cA$. 
If $s$ is a seed optimized\footnote{This refers to the vertex $v_{i,i+1}$ being a sink in our case.} for the frozen index ${}_{i,i+1}$ (see \cite[Definition~9.1, Lemma~9.3]{GHKK}), 
the $\tf$-function $\tf_{i,i+1}$ is given by 
$\tf_{i,i+1} = z^{-e_{i,i+1;s}} = X_{i,i+1;s}^{-1}$.
We can mutate from $s$ to $s_0$ to express $\tf_{i,i+1}$ in terms of the variables of the initial seed.
The potential $W_{\text{GHKK}}:\cX\to\C$ is the sum of the $\tf$-functions associated to each component $D_{i,i+1}$ of $D$.
It is straightforward to compute these $\tf$-functions in terms of the $\cX$-variables of $s_0$:
\eq{
\tf_{12} &= X_{12}^{-1}, \\ 
\tf_{23} &= X_{23}^{-1} + X_{23}^{-1} X_{13}^{-1}, \\
\tf_{34} &= X_{34}^{-1} + X_{34}^{-1} X_{14}^{-1}, \\
\tf_{45} &= X_{45}^{-1}, \\ 
\ \ \tf_{15} &= X_{15}^{-1} + X_{15}^{-1} X_{14}^{-1} + X_{15}^{-1} X_{14}^{-1} X_{13}^{-1}  .}{thetas}

Next, the Marsh--Rietsch potential $W_{\text{MR}}$ on $\XXdo \times \C^*$ also has a summand $W_i$ associated to each component of $D$, and the potential $W_{\text{MR}}|_{q=1}:\XXdo\to\C$ is the sum of the $W_i$'s.  
See \cite[Equations~10.1~and~10.2]{RW17}. 
We can choose the map $N/\Nuf \to \Nuf^\perp$ such that $p^*\lrp{\vartheta_{i+1,i+2}}$ is the summand $W_i= \dfrac{p_{i+1,i+3}}{p_{i+1,i+2}}$ of \cite[Equation~10.2]{RW17}.
For each $i$ we compute $\flow(W_i)=\dfrac{\flow(p_{i+1,i+3})}{\flow(p_{i+1,i+2})}$ and obtain
\eq{
\flow(W_5) &= x_{\yd{3,3}}, \\
\flow(W_1) &= x_{\yd{2,2}}\lrp{1+ x_{\yd{2}}}, \\
\flow(W_2) &= x_{\yd{1,1}}\lrp{1+ x_{\yd{1}}}, \\
\flow(W_3) &= (x_{\yd{1}}  x_{\yd{2}}  x_{\yd{3}} x_{\yd{1,1}}  x_{\yd{2,2}}  x_{\yd{3,3}})^{-1}, \\ 
%\text{and} 
\qquad
\flow(W_4) &=  x_{\yd{3}}\lrp{1+x_{\yd{2}}+x_{\yd{1}}x_{\yd{2}}}
 .}{flows}

Note that functions on $\XXdo$ are functions on $\cA$ fixed by the $\C^*$-action of simultaneous scaling.
We have a $\tf$-basis for $\ssO(\cA)$ from $\cX^\trop(\Z)$.
The slice $\lrp{\XXo}^\trop(\Z)$ corresponding to functions fixed by this action gives a $\tf$-basis for $\ssO(\XXdo)$. 
Explicitly, let 
\eqn{
a = a_{12} f_{12} + a_{13} f_{13} + a_{14} f_{14} + a_{15} f_{15}+ a_{23} f_{23} + a_{34} f_{34}  + a_{45} f_{45}
}
be an arbitrary element of $M \cong \cX^\trop(\Z)$.
Then $a\in \lrp{\XXo}^\trop(\Z)$ if and only if 
\eqn{a_{12} + a_{13} + a_{14}  + a_{15} + a_{23} + a_{34} + a_{45} = 0.}
Dually, we identify $(\XXdo)^\trop\lrp{\Z}$ with the quotient of $N$ by 
\eqn{\vb{1}=e_{12}+ e_{13} + e_{14}  + e_{15} + e_{23} + e_{34} + e_{45} .}
Observe that $p^*\lrp{N}= \vb{1}^\perp\subset M$ and $\Z \cdot \vb{1} = \ker \lrp{p^*}$, so $p^*$
induces an isomorphism 
\eq{
\overline{p}^*: (\XXdo)^\trop\lrp{\Z} \stackrel{\sim}{\to} \lrp{\XXo}^\trop\lrp{\Z},
}{p-star-iso} 
i.e. an isomorphism $\overline{p}:\XXdo \stackrel{\sim}{\to} \XXo$.
We have the following commutative diagrams.

\noindent
\begin{minipage}{\linewidth}
\begin{center}
\begin{tikzcd}

    C\lrp{\Grass_2\lrp{\C^5}} \arrow[hookleftarrow]{r} \arrow{d}  & \cA \arrow{r}{p} \arrow{d} &\cX 
    
    & \cA^\trop\lrp{\Z} \cong N \arrow{r}{p^*} \arrow[twoheadrightarrow]{d}  & M \cong \cX^\trop\lrp{\Z}
    \\
    
    \Grass_2\lrp{\C^5}\arrow[hookleftarrow]{r} & \XXdo \arrow{r}{\overline{p}}[swap]{\sim} & \XXo \arrow[hookrightarrow]{u}
    
    & (\XXdo)^\trop\lrp{\Z} \cong N/\vb{1} \arrow{r}{\overline{p}^*}[swap]{\sim} & \vb{1}^\perp \cong (\XXo)^\trop\lrp{\Z} \arrow[hookrightarrow]{u}
    
\end{tikzcd}
\end{center}
\end{minipage}

Below, we identify the $x_\mu$ with functions on $\XXo$, and conclude that $\flow$ is precisely $\lrp{\overline{p}^{-1}}^*$.
Given a function $F$ on $\cX$ (denoted with a capital letter), denote by $f$ (in lowercase) the associated function $ \lrp{\overline{p}^{-1}}^*(p^*\lrp{F})$ on $\XXo$.
For example, for every $\cX$-variable $X_{ij}$ we obtain a function on $\XXo$ denoted $x_{ij}=(\overline{p}^{-1})^*\circ (p^*(X_{ij}))$.

In light of \eqref{X13}, we make the identifications
\eqn{
x_{\yd{2}} = x_{13}
 \ \ \text{ and } \ \
x_{\yd{1}} = x_{14}.
}
Next, comparing \eqref{thetas} and \eqref{flows}, we set
\eqn{
x_{\yd{3,3}} &= \vb{x}^{\cv\lrp{\tf_{12}}}= x_{12}^{-1}, \qquad \qquad
x_{\yd{2,2}} = \vb{x}^{\cv\lrp{\tf_{23}}}= x_{23}^{-1}x_{13}^{-1},\\
x_{\yd{1,1}} &= \vb{x}^{\cv\lrp{\tf_{34}}}= x_{34}^{-1}x_{14}^{-1}, \qquad
x_{\yd{3}} = \vb{x}^{\cv\lrp{\tf_{15}}}= x_{15}^{-1}x_{14}^{-1}x_{13}^{-1}.
}
Finally, observe \eqn{\cv\lrp{X_{13}}+\cv\lrp{X_{14}}+\cv\lrp{\tf_{12}}+\cv\lrp{\tf_{23}}+\cv\lrp{\tf_{34}}+\cv\lrp{\tf_{45}}+\cv\lrp{\tf_{15}} = -\vb{1} \in \ker(p^*) .}
With this in mind,
we set
\eqn{x_\varnothing x_{\yd{1}} x_{\yd{2}} x_{\yd{3}} x_{\yd{1,1}} x_{\yd{2,2}} x_{\yd{3,3}}=1\ \
\text{ and } \ \
x_\varnothing = \vb{x}^{\cv\lrp{\tf_{45}}} = x_{45}^{-1}.
}
With these identifications, it is immediate that $\flow=\lrp{\overline{p}^{-1}}^* $.
We can now extend all flow polynomials to the family $\Xfams{s_0}$ {\emph{deforming $\XXo$}} (as opposed to the family deforming $\cX$).
The coefficients satisfy the same relations as the $x$'s.
The monomials in \eqref{FlowPoly} extend directly, e.g. 
\eqn{
\widetilde{\flow\lrp{p_{13}}} = \widetilde{x}_{12}^{-1} = \widetilde{x}_{\yd{3,3}} .
}
The extensions of the three remaining flow polynomials are
\eqn{
\widetilde{\flow (p_{24})} &= \frac{ \widetilde{x}_{34} \widetilde{x}_{45}\lrp{1+t_{13} \widetilde{x}_{13}}}{ \widetilde{x}_{13}} 
=   \widetilde{x}_{\yd{3}} \widetilde{x}_{\yd{2,2}} \widetilde{x}_{\yd{3,3}} \lrp{1+t_{\yd{2}} \widetilde{x}_{\yd{2}}}\\
\widetilde{\flow (p_{25})} &= \frac{\widetilde{x}_{45} \lrp{1 + t_{13} \widetilde{x}_{13} + t_{13}t_{14} \widetilde{x}_{13} \widetilde{x}_{14} } }{  \widetilde{x}_{13} \widetilde{x}_{14}}  = 
\widetilde{x}_{\yd{3}} \widetilde{x}_{\yd{1,1}}  \widetilde{x}_{\yd{2,2}} \widetilde{x}_{\yd{3,3}} \lrp{1+t_{\yd{2}} \widetilde{x}_{\yd{2}}+ t_{\yd{1}}t_{\yd{2}} \widetilde{x}_{\yd{1}} \widetilde{x}_{\yd{2}}}\\
\widetilde{\flow (p_{35})} &= \frac{ \widetilde{x}_{45} \lrp{1+t_{14} \widetilde{x}_{14}}}{ \widetilde{x}_{12} \widetilde{x}_{14} } =  \widetilde{x}_{\yd{2}}\widetilde{x}_{\yd{3}} \widetilde{x}_{\yd{1,1}} \widetilde{x}_{\yd{2,2}} {\widetilde{x}_{\yd{3,3}}}^2 \lrp{1+t_{\yd{1}} \widetilde{x}_{\yd{1}}}.
}
\begin{remark}\thlabel{val}
Note that the homogeneous degree of the extension, i.e. its $\cv$-vector, is precisely Rietsch--Williams' valuation $\val_G$ of \cite[Definition~8.1]{RW17}.
\end{remark}

On account of \eqref{p-star-iso}, the family $\Xfams{s_0}$ deforming $\XXo$ is the $\cAps{s_0}$-family for $\XXdo$, 
with $\overline{p}$ identifying the fibers of the two families.  
As mentioned in \thref{rem:AprinXfam},
the partial compactifications of these families on the other hand are different.  
Since the uncompactified families agree, we have a choice to make here.
Rietsch and Williams describe the Grassmannian in terms of a Newton--Okounkov body for the anticanonical divisor: 
the integral points of the Newton--Okounkov body give a basis of global sections for the anticanonical bundle.  
This is how we view the $\cA$-side of the picture, so the relevant partial compactification and toric degeneration here is \cite{GHKK}'s $\overline{\cAps{s_0}}$.
Indeed, as the Grassmannian is separated, this toric degeneration can not be $\Xfsps{s_0}$.
The functions and valuations however come from $\Xfams{s_0}$, with $\overline{p}$ allowing us to identify the uncompactified families.

We complete the identification of the two pictures as follows.
Fixing $q=1$ in \cite[Equation~10.1]{RW17} gives the potential $W_{\text{MR}}|_{q=1}:\XXdo\to \C$ on $\XXdo$ rather than $\XXdo \times \C^*$.
From the Gross--Hacking--Keel--Kontsevich potential $W_{\text{GHKK}}:\cX\to\C$ (see \cite[p.506]{GHKK}) we obtain $w_{\text{GHKK}}:\XXo\to \C$ given as a sum of $\tf$-functions on $\XXo$. 
It pulls back to the Marsh--Rietsch potential on $\XXdo$:
\eqn{
\overline{p}^*\lrp{w_{\mathrm{GHKK}}} = W_{\mathrm{MR}}|_{q=1}.
}
Identifying potentials identifies the polytopes defined by these potentials:
\eqn{ 
\overline{p}^*\lrp{\Gamma_{\mathrm{RW}}}
=
\Xi_{\mathrm{GHKK}}.
}
Compare \cite[Definition~10.10]{RW17} and \cite[Equation~8.7]{GHKK}.
Finally, on account of \thref{val}, $\flow=\lrp{\overline{p}^{-1}}^*$ identifies the $\gv$-vector of a $\tf$-function on $\XXdo$ with the valuation of the function-- the $\cv$-vector of $\lrp{\overline{p}^{-1}}^*\lrp{\tf}$ on $\XXo$. 
So, taking $\overline{p}: \XXdo \to \XXo$ rather than $\overline{p}^{-1}: \XXo \to \XXdo$, 
we also have 
\eqn{
\overline{p}^*\lrp{\Delta_{\mathrm{RW}}}
=
\Xi_{\mathrm{GHKK}}.
}
See \cite[Definition~8.2]{RW17}.
This explains from the \cite{GHKK} point of view the equality of the polytopes $\Gamma$ and $\Delta$ in \cite{RW17}.

\subsection{Del Pezzo surface of degree five}\label{ex:dP5}

The point of this example is to indicate a potential connection between cluster duality for cluster varieties with coefficients and Batyrev duality for Gorenstein toric Fano varieties.
Recall that Batyrev introduced a method for constructing mirror families of Calabi--Yau varieties in \cite{Bat}, taking families of anticanonical hypersurfaces in a pair of Gorenstein toric Fano varieties.  
The pair of Fanos are the toric varieties associated to a pair of (polar) dual reflexive polytopes, or equivalently to a reflexive polytope and its face-fan.
As described in the introduction, outside of the toric world these two descriptions are inequivalent, and the one relevant here is the latter.

There is a natural bounded polytope $P\subset \cX_{\Gamma^\vee}^\trop\lrp{\R} $ we can associate to a finite type $\cA$-cluster variety without frozen directions.  
First, we build a simplex in each chamber by taking the convex hull of the origin and the $\gv$-vectors generating the chamber. 
Then we take $P$ to be the union of these simplices.  
After identifying $\cX_{\Gamma^\vee}^\trop\lrp{\R}$ with $M_{\R}^\circ$ by a choice of seed, 
{\emph{if}} $P$ is convex, then it is a reflexive polytope whose unique integral interior point is the origin.\footnote{We make the identification with $M_{\R}^\circ$ so that we can talk about convexity here, as $\cX_{\Gamma^\vee}^\trop\lrp{\R}$ does not have a linear structure.}
The $\gv$-fan is the face-fan of $P$.
Suppose we compactify the $\cA$-cluster variety using $P$ and produce a toric degeneration using \cite{GHKK}'s $\cAps{s}$ construction, having central fiber $\TV{P}$.
Then $\TV{P}$ is a Gorenstein toric Fano variety, and the central fiber $\TV{\cc}$ of $\Xfsps{s}$ is the Batyrev dual Gorenstein toric Fano. 
While the general fibers of $\Xfsps{s}$ are non-separated, and {\it{a fortiori}} not Fano, we are really interested in a family of hypersurfaces in these fibers. 
These hypersurfaces may remain well-behaved away from the central fiber.
This observation will be explored and generalized in the sequel
with M.-W. Cheung.
Below we study the case of the del Pezzo surface of degree five.
This surface can be realized as a compactification of the $\cA$-cluster variety associated to the $A_2$ quiver (see \cite[Example~8.31]{GHKK}).
We review this construction. 
For more details on the degree five del Pezzo surface we refer the reader to \cite[\S8.5]{Dol12}.

The del Pezzo surface of degree five $S$ is isomorphic to a surface obtained by 
blow-up of the projective plane $\mathbb{P}^2$ at four points $q_1, q_2, q_3,q_4$ in general position. 
It is well-known
that $S$ contains ten $(-1)$-curves, four corresponding to the exceptional divisors and six corresponding to the strict transforms of lines joining pairs of points, and the incidence 
graph of these curves is given by the Petersen graph. 

\begin{figure}[htb]
\centerline{
\includegraphics[scale=1]{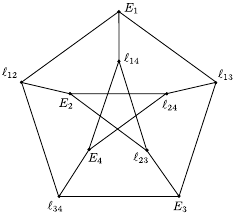}}
\caption{ \label{petersen} Petersen graph associated to the surface $S$.}
\end{figure}

Once we choose an anticanonical divisor consisting of curves forming a cycle, one can contract two non-consecutive
curves to obtain a toric model of $S$. Without loss of generality, we can assume that the anticanonical
cycle is the inner cycle in the Petersen graph. Considering Figure \ref{petersen}, we obtain the five toric models contracting the curves $\{ E_4, E_2\}$, $\{ E_2, \ell_{14} \}$, 
$\{ \ell_{14}, \ell_{24} \}$, $\{ \ell_{24}, \ell_{23} \}$ 
and $\{ \ell_{23}, E_4\}$. 
Here, $E_i$ stands for the exceptional divisor associated to the point $q_i$ and 
$\ell_{ij}$ stands for the strict transform of the line in $\mathbb{P}^2$ joining $q_i$ and $q_j$. These models are
isomorphic to the blow-up of $\mathbb{P}^1\times\mathbb{P}^1$ at one point and are related by performing
an elementary transformation at one point lying on a curve of self-intersection zero in the cycle. For example, Figure \ref{mutationdp5} shows how the toric
model associated to $\{ E_2, \ell_{14} \}$ can be obtained from the model associated to $\{ E_2, E_4\}$ by performing
an elementary transformation at a point lying on $\ell_{34}$. 

\begin{figure}[htb]
\centerline{
\includegraphics[scale=1]{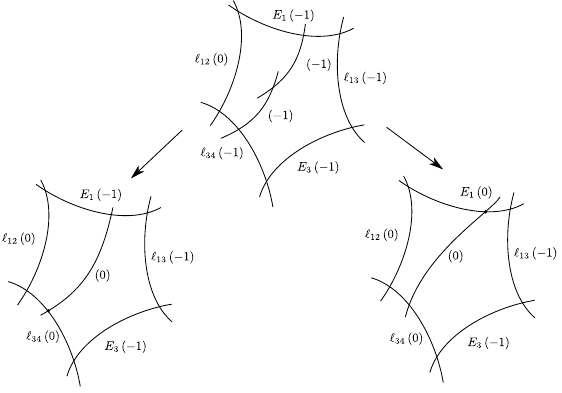}}
\caption{ \label{mutationdp5} Elementary transformation between two toric models of $S$.}
\end{figure}

Another important feature about the del Pezzo surface of degree five is that it is isomorphic to a linear section of the Grassmannian variety $\Grass_2(\C^5)$ (see \cite[Proposition 8.5.1]{Dol12}). We use this property to prove that the del Pezzo surface of degree five is a compactification of the $\cA$-cluster variety constructed from a type $A_2$ quiver. Note that we can recover the defining
equations of $\Grass_2(\C^5)$ with respect to the Pl\"ucker embedding
considering triangulations
of the 5-cycle, with Pl\"ucker coordinates associated to lines in the triangulations (see \cite[p.8]{FZ_clustersI}).
Considering all possible mutations we obtain the equations:
\eqn{
  p_{13}p_{24}&=p_{23}p_{14}+p_{12}p_{34},\\ 
  p_{14}p_{25}&=p_{15}p_{24}+p_{12}p_{45}, \\
  p_{24}p_{35}&=p_{34}p_{25}+p_{23}p_{45}, \\
  p_{13}p_{25}&=p_{12}p_{35}+p_{15}p_{23}, \\ 
  p_{14}p_{35}&=p_{45}p_{13}+p_{15}p_{34}.
}
Now consider the variety $Y\subset \mathbb{P}^9$ defined by $V(p_{12}-p_{23}, p_{23}-p_{34}, p_{34}-p_{45}, p_{45}-p_{15})$ 
and take the intersection of $\Grass_2(\C^5) \subset \PP^9$ with $Y$. 
The intersection, denoted by $T$, is defined by the following equations in $Y\cong \mathbb{P}^5$:
\eq{
  p_{13}p_{24}&=p_{12}p_{14}+p_{12}^2,\\ 
  p_{14}p_{25}&=p_{12}p_{24}+p_{12}^2,\\ 
  p_{24}p_{35}&=p_{12}p_{25}+p_{12}^2,\\
  p_{13}p_{25}&=p_{12}p_{35}+p_{12}^2,\\
  p_{14}p_{35}&=p_{12}p_{13}+p_{12}^2.} 
{eq:dp5}
One verifies computationally that $T$ is a smooth irreducible surface. To construct the canonical divisor of $T$ we do the following. We consider $D_1=V(p_{12}-p_{23})$, $D_2=V(p_{23}-p_{34})$, $D_3=V(p_{34}-p_{45})$, and $D_4=V(p_{45}-p_{15})$ 
as divisors in $\Grass_2(\C^5)$. Notice that all of them are linearly equivalent. In this way we have $T=\cap_{i=1}^4D_i$. Let $D=D_1+D_2+D_3+D_4$. 
We write the anticanonical divisor of $\Grass_2(\C^5)$ as $\sum_{ i\in \mathbb{Z}/5 \mathbb{Z}} D_{i,i+1}$,
 where $D_{i,j}=V(p_{ij})$.
 Note that the components here are also linearly equivalent to the $D_i$'s above.
Then by the adjunction formula 
\begin{align} \label{eq:-kdp5}
  K_T =(K_{\Grass_2(\C^5)}+D)|_{T} = \ssO_{\Grass_2(\C^5)}(-D_{i,j})|_T.
\end{align}
Therefore, $\omega_T\cong \ssO_T(-1)$ and then $-K_T$ is an ample divisor. Since all del Pezzo surfaces of degree five are isomorphic, we conclude that $T$ is isomorphic to $S$. So, without loss of generality we may assume that the equations of \eqref{eq:dp5} define $S$.

Now, identifying Pl\"ucker coordinates with $\cA$-cluster variables as follows
\eq{
A_1:=p_{13},\ A_2:=p_{14},\ A_3:=p_{24},\ A_4:=p_{25},\ A_5:=p_{35},
}{A-Plucker}
and taking the divisor $H=V(p_{12})$, it follows from \eqref{eq:-kdp5} that $H$ is an anticanonical divisor on $S$.
Then $U=S\backslash H$  is a log Calabi--Yau surface defined by the equations $A_{i-1}A_{i+1}=A_i+1$ for $i\in\mathbb Z/5\mathbb Z$. Note that such equations are the exchange relations for the $\cA$-cluster variety obtained from a type $A_2$ quiver.

We can describe the above, and deformations of it, using the notion of a \emph{positive polytope}, see \cite[Definiton~8.6]{GHKK}. \medskip

{\bf{Warning:  }} {\it{To be in harmony with \cite[Example~8.31]{GHKK}, we flip the orientation of the $A_2$ quiver relative to previous examples, where orientation was chosen for ease of comparison with \cite[Tables 1-4]{FZ_clustersIV}.  This change is the source of the difference in wall placement between Figures~\ref{fig:A2_family}~and~\ref{fig:A2_gfan_poly}.}}\medskip

The cones of the $\gv$-fan are domains of linearity of $\cX^\trop\lrp{\R}$.
So, we can take the convex hull of the $\gv$-vectors that generate each cone together with the origin.
We define the polytope $P$ as the union of these convex hulls, see Figure~\ref{fig:A2_gfan_poly}. \bigskip

\noindent
\begin{minipage}{\linewidth}
\captionsetup{type=figure}
\begin{center}
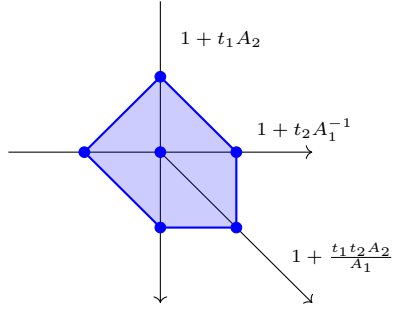

\begin{tikzpicture}

\draw (-2,0) -- (0,0);
\draw [->] (0,0) -- (2,0);
\draw [->] (0,0) -- (0,-2);
\draw (0,0) -- (0,2);
\draw [->] (0,0) -- (2,-2);

\draw[blue, thick, fill= blue, fill opacity= 0.2] (1,0) -- (0,1) -- (-1,0) -- (0,-1) -- (1,-1) -- (1,0);

\draw [blue, fill] (1,0) circle [radius=0.07];
\draw [blue, fill] (0,1) circle [radius=0.07];
\draw [blue, fill] (-1,0) circle [radius=0.07];
\draw [blue, fill] (0,-1) circle [radius=0.07];
\draw [blue, fill] (1,-1) circle [radius=0.07];
\draw [blue, fill] (0,0) circle [radius=0.07];

\node at (0.8,1.5) {\tiny $ 1 + t_1 A_2 $};
\node at (1.9,0.3) {\tiny $ 1 + t_2 A_1^{-1} $};
\node at (2.4,-1.4) {\tiny $ 1 + \frac{t_1 t_2 A_2}{ A_1} $};

\end{tikzpicture}
    \captionof{figure}{\label{fig:A2_gfan_poly}The $A_2$ scattering diagram and the polytope $P$. The integral points of $P$, shown as filled circles, correspond to $\tf$-function generators of a graded algebra.
    The scattering functions shown include principal coefficients.
    To recover the $\gv$-fan, simply forget the scattering functions and wall directions.}

\end{center}
\end{minipage}
\bigskip

To describe the family of projective varieties,
we view the integral points of $P$ and its dilations as giving a $\tf$-function basis  
for a graded $R$-algebra, where $R= \C[t_1,t_2]$.
Following \cite[Example~8.31]{GHKK}, we call the $\tf$-function associated to the interior point $\tf_0$, and the remainder $\tf_1, \dots, \tf_5$ starting with the point at $\lrp{1,0}$ and proceeding counter-clockwise.
Observe that for $i\neq 0$, $\tf_i$ is the extension of $A_i$ (from \eqref{A-Plucker}) to $\cAp$.
We take $\tf_0$ to be a homogenization variable, much like $p_{12}$ in \eqref{eq:dp5}.
The graded $R$-algebra is generated in degree 1 by these six $\tf$-functions. 
The relations between degree 1 generators are given by homogenizing the wall-crossing relations (i.e. the mutation relations in $\cAp$), giving:
\eqn{
 \tf_1 \tf_3 &= t_1 \tf_0 \tf_2 + \tf_0^2,\\
 \tf_2 \tf_4 &= t_2 \tf_0 \tf_3 + \tf_0^2,\\
 \tf_3 \tf_5 &= \tf_0 \tf_4 + t_1 \tf_0^2,\\
 \tf_4 \tf_1 &= \tf_0 \tf_5 + t_1 t_2 \tf_0^2,\\ 
 \tf_5 \tf_2 &= \tf_0 \tf_1 + t_2 \tf_0^2.
}
(Again, see \cite[Example~8.31]{GHKK}.) The central fiber in this family, i.e. when $t_1=t_2=0$, is simply the toric variety  $\TV{P}$ associated to $P$.
Observe that $P$ is the polytope 5a in \cite[\S8, Table~2]{CLS},
and $\TV{P}$ the associated Gorenstein toric Fano.

Meanwhile, the $\gv$-fan also gives us a compactification and toric degeneration of the $\cX$-variety associated to the $A_2$ quiver.
This is described in \S\ref{sec:ex_A2}, albeit for the opposite orientation of this quiver\footnote{The two choices obviously give isomorphic spaces.}.
In this case the central fiber is the toric variety $\TV{\cc}$ associated to the underlying {\emph{fan}} $\cc$ of Figure~\ref{fig:A2_gfan_poly}. Note that $\TV{\cc}$ is the del Pezzo surface of degree seven.
It is straightforward to verify that $\cc$ is the normal fan of the polytope 7a of \cite[\S8, Table~2]{CLS}.
The polytopes 5a and 7a are polar dual, and the central fibers $\TV{P}$ and $\TV{\cc}$ of the two families are Batyrev dual toric Fanos.
Our hope-- which we will explore in further work with M.-W. Cheung-- is that general fibers of the two families have the right to be called Batyrev dual as well.
We formalize this hope with the following question:

\begin{quest}
Does there exist a family $\mathcal{F}_{\vb{t}}$ of Calabi--Yau hypersurfaces in $\Xsp_{\vb{t}}$ satisfying:
\begin{enumerate}
    \item $\mathcal{F}_{\vb{t}}$ degenerates in $\Xfsps{s}$ to a family of anticanonical hypersurfaces in $\TV{\cc}$, and
    \item the Calabi--Yau hypersurfaces of $\mathcal{F}_{\vb{t}}$ are mirror to generic anticanonical hypersurfaces in the minimal model for $\cA_{\vb{t}}$ determined by $P$?
\end{enumerate}
\end{quest}

\end{document}